\documentclass[12 pt]{amsart}
\frenchspacing

\textwidth=13.5cm
\textheight=23cm
\parindent=16pt
\oddsidemargin=-0.5cm
\evensidemargin=-0.5cm
\topmargin=-0.5cm

\usepackage{amscd}
\usepackage{amsmath}
\usepackage{amsthm}
\usepackage{amsfonts}
\usepackage{amssymb}
\usepackage{amsxtra}
\usepackage{color}


\def\C{\mathbb{C}}
\def\Z{\mathbb{Z}}
\def\N{\mathbb{N}}

\def\R{\mathbb{R}}

 \newtheorem{thm}{Theorem}[section]
 \newtheorem{cor}[thm]{Corollary}
 \newtheorem{lem}[thm]{Lemma}

 \newtheorem{rem}[thm]{Remark}

\newcommand{\be}{\begin{equation}}
\newcommand{\ee}{\end{equation}}
\newcommand{\bea}{\begin{eqnarray}}

\newcommand{\eea}{\end{eqnarray}}
\newcommand{\Bea}{\begin{eqnarray*}}
\newcommand{\Eea}{\end{eqnarray*}}

\newcommand{\esssup}[0]{\operatornamewithlimits{ess\,sup}}

\newcounter{cnt1}
\newcounter{cnt2}
\newcounter{cnt3}
\newcommand{\blr}{\begin{list}{$($\roman{cnt1}$)$}
 {\usecounter{cnt1} \setlength{\topsep}{0pt}
 \setlength{\itemsep}{0pt}}}
\newcommand{\bla}{\begin{list}{$($\alph{cnt2}$)$}
 {\usecounter{cnt2} \setlength{\topsep}{0pt}
 \setlength{\itemsep}{0pt}}}
\newcommand{\bln}{\begin{list}{$($\arabic{cnt3}$)$}
 {\usecounter{cnt3} \setlength{\topsep}{0pt}
 \setlength{\itemsep}{0pt}}}
\newcommand{\el}{\end{list}}

\title[Fourier multipliers on $H^n$]{Fourier multipliers on the Heisenberg group revisited
}
\author{Sayan Bagchi}
\sloppy
\date{}
\begin{document}

\baselineskip=17pt

\address{Stat-Math Unit, Indian Statistical Institute, Kolkata, India.}
\curraddr{Department of Mathematics and Statistics\\
 Indian Institute of Science Education and Research Kolkata\\
Mohanpur 741246, Nadia, West Bengal, India}
\email{sayansamrat@gmail.com}


\keywords{Heisenberg group, $A_p$
weights, $A_2$ conjecture,  $L^p$ multipliers on
the Heisenberg group.} \subjclass[2010] {Primary:  43A80, 42B25.
Secondary: 42B20, 42B35, 33C45.}


\begin{abstract}
In this paper we give explicit expressions of differential-difference operators that satisfy the hypothesis of the general Fourier multiplier theorem associated to the Heisenberg groups proved by Mauceri and De Michele, for one dimension, and Lin, for higher dimension. We also give a much shorter proof of the above mentioned theorem. Moreover, we obtain a sharp weighted estimate for Fourier multipliers on the Heisenberg groups.

\end{abstract}


\maketitle

\section [Introduction]
{Introduction and the main results}
Given a bounded function $m$ on $\R^n$, let us consider the operator $T_m$ defined as follows:
$$\mathcal{F}(T_m f)(\xi)= m(\xi) \mathcal{F}(f)(\xi). $$
Here $\mathcal{F}$ stands for the Fourier transform on $\R^n$. By Plancherel theorem, one can immediately see that $T_m$ is bounded on $L^2(\R^n)$. We say that $m$ (or equivalently $T_m$) is an $L^p$ multiplier if $T_m$ can be extended to $L^p(\R^n)$ as a bounded linear operator. A sufficient condition for $L^p$ multipliers was given by H\"{o}rmander, and is stated below (see for instance \cite[Corollary 8.11]{D}).
\begin{thm}[H\"{o}rmander]
Let $k=[\frac{n}{2}]+1$. If $m\in C^k$ away from the origin and satisfies
$$\sup_R R^{|\beta|-\frac{n}{2}}\left(\int_{R<|\xi|< 2R}|D^\beta m(\xi)|^2d\xi\right)^{\frac{1}{2}}\leq C$$
for all $|\beta|\leq k$, then $T_m$ is an $L^p$ multiplier.
\end{thm}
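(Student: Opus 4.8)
The plan is to derive the theorem from the Calder\'on--Zygmund theory of singular integrals. Since Plancherel's theorem already gives $\|T_m\|_{L^2\to L^2}\le\|m\|_\infty<\infty$, it suffices to write $T_m f=K*f$ with $K=m^\vee$ (understood as a tempered distribution) and to verify H\"ormander's integral condition
\[
\sup_{y\neq 0}\int_{|x|>2|y|}\big|K(x-y)-K(x)\big|\,dx\le C .
\]
Granting this, the Calder\'on--Zygmund theorem yields the weak type $(1,1)$ estimate; Marcinkiewicz interpolation with the $L^2$ bound then gives $L^p$ boundedness for $1<p\le 2$, and the range $2\le p<\infty$ follows by duality, since $T_m^{*}=T_{\overline m}$ and $\overline m$ satisfies the same hypotheses. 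Thus the whole theorem reduces to the displayed kernel estimate, and it is here that the exponent $k=[\tfrac n2]+1$ is decisive.

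To prove the kernel estimate I would decompose $m$ dyadically. Fix $\phi\in C_c^\infty$ supported in $\{\tfrac12<|\xi|<2\}$ with $\sum_{j\in\Z}\phi(2^{-j}\xi)=1$ for $\xi\neq 0$, put $m_j(\xi)=m(\xi)\,\phi(2^{-j}\xi)$ and $K_j=m_j^\vee$, so that $K=\sum_j K_j$. Rescaling the hypothesis gives $\int_{|\xi|\sim 2^j}|D^\beta m|^2\lesssim 2^{j(n-2|\beta|)}$ for $|\beta|\le k$, and the Leibniz rule together with $|D^\gamma(\phi(2^{-j}\cdot))|\lesssim 2^{-j|\gamma|}$ transfers this to $\|D^\beta m_j\|_2^2\lesssim 2^{j(n-2|\beta|)}$. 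Running the two endpoint cases $\beta=0$ and $|\beta|=k$ through Plancherel yields
\[
\int_{\R^n}|K_j|^2\,dx\lesssim 2^{jn},\qquad \int_{\R^n}|x|^{2k}|K_j(x)|^2\,dx\lesssim \sum_{|\alpha|=k}\|D^\alpha m_j\|_2^2\lesssim 2^{j(n-2k)},
\]
and, since the Fourier transform of $\nabla K_j$ is $2\pi i\,\xi\,m_j(\xi)$, which lives where $|\xi|\sim 2^j$, one gets the analogous pair with an extra factor $2^{2j}$ for $\nabla K_j$. Because $s\mapsto\int|x|^{2s}|K_j|^2\,dx$ is log-convex, interpolating the endpoints upgrades these to
\[
\int|x|^{2s}|K_j|^2\,dx\lesssim 2^{j(n-2s)},\qquad \int|x|^{2s}|\nabla K_j|^2\,dx\lesssim 2^{j(n+2-2s)}\qquad(0\le s\le k).
\]

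I would then fix a real number $s$ with $\tfrac n2<s<\tfrac n2+1$ and $s\le k$; such an $s$ exists precisely because $k=[\tfrac n2]+1>\tfrac n2$. Since $2s>n$ we have $\int_{|x|>\delta}|x|^{-2s}\,dx\approx\delta^{\,n-2s}$, and Cauchy--Schwarz converts the weighted bounds into the scalar estimates
\[
\int_{|x|>\delta}|K_j|\,dx\lesssim (2^j\delta)^{\frac n2-s},\qquad \int_{|x|>\delta}|\nabla K_j|\,dx\lesssim 2^j\,(2^j\delta)^{\frac n2-s}.
\]
To assemble H\"ormander's condition I split the sum over $j$ at the scale $2^j|y|\approx 1$. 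For the low frequencies $2^j|y|\le 1$ the fundamental theorem of calculus gives $\int_{|x|>2|y|}|K_j(x-y)-K_j(x)|\,dx\le|y|\int_{|z|>|y|}|\nabla K_j(z)|\,dz\lesssim (2^j|y|)^{1+\frac n2-s}$, with positive exponent; for the high frequencies $2^j|y|>1$ the triangle inequality and the size bound give $\lesssim(2^j|y|)^{\frac n2-s}$, with negative exponent. Both families are geometric series summing to a bound uniform in $y$, which is exactly the required estimate.

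The crux of the argument, and the step I expect to be the main obstacle, is the passage from the $L^2$ derivative hypothesis to the weighted kernel estimates: one must see that Plancherel turns $L^2$ control of $D^\beta m_j$ into $L^2$ control of $|x|^{|\beta|}K_j$, and that the weight $|x|^{-s}$ in the Cauchy--Schwarz step is integrable on $\{|x|>\delta\}$ exactly when $s>\tfrac n2$. Everything downstream is bookkeeping governed by the two exponents $1+\tfrac n2-s$ and $\tfrac n2-s$; the requirement that the first be positive and the second negative forces $\tfrac n2<s<\tfrac n2+1$, and the need for $k\ge s$ integer derivatives to reach such an $s$ is precisely why the critical smoothness is $k=[\tfrac n2]+1$.
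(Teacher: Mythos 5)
Your proof is correct, but there is nothing in the paper to measure it against: this statement is H\"ormander's classical Euclidean theorem, which the paper quotes in the introduction purely as background and never proves (the paper's own work begins with the Heisenberg analogue, Theorem \ref{th:Main}). Judged on its own, your argument is the standard Calder\'on--Zygmund proof and the bookkeeping checks out: the rescaling/Leibniz step gives $\|D^\beta m_j\|_2^2\lesssim 2^{j(n-2|\beta|)}$, Plancherel turns this into the endpoint weighted bounds for $K_j$ and $\nabla K_j$, the log-convexity interpolation legitimately reaches non-integer $s$ (needed when $n$ is odd, where the only admissible choice is $s=k=\frac{n+1}{2}$), and the two exponents $1+\frac n2-s>0$ and $\frac n2-s<0$ make both geometric series converge uniformly in $y$. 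It is worth observing that your proof is precisely the Euclidean model of what the paper does for Theorem \ref{th:Main}: there the dyadic pieces $K_j$ are replaced by $T_M\psi_r$ built from a Laguerre-type approximate identity, your weighted estimates $\int|x|^{2s}|K_j|^2\,dx\lesssim 2^{j(n-2s)}$ correspond to the weighted kernel estimates of Lemmas \ref{lem:main2} and \ref{lem:2}, and the Euclidean Calder\'on--Zygmund theorem is replaced by Theorem 3.1 of \cite{CW} on spaces of homogeneous type; the paper, however, only obtains $1<p<2$ there because the duality step you use has no direct analogue in its operator-valued setting. Two points you leave implicit are routine but should be stated in a write-up: (i) applying the Calder\'on--Zygmund theorem requires the representation $T_mf(x)=\int K(x-y)f(y)\,dy$ for a.e.\ $x$ outside $\mathrm{supp}\,f$, i.e.\ identifying the distribution $m^\vee$ on $\R^n\setminus\{0\}$ with the locally integrable function $\sum_j K_j$; and (ii) that identification needs a separate bound as $j\to-\infty$ (e.g.\ $\|K_j\|_\infty\le\|m_j\|_1\lesssim 2^{jn}$), since your estimate $\int_{|x|>\delta}|K_j|\,dx\lesssim(2^j\delta)^{\frac n2-s}$ degenerates in that limit. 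Neither is a gap in the mathematics.
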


Now let us consider the Heisenberg group $H^n$. One can also define a Fourier multiplier corresponding to the group Fourier transform on the Heisenberg group.
Let $M=\{M(\lambda)\in B(L^2(\R^n)):\lambda\in \R^*\}$ be a family of uniformly bounded operators. Then the operator  $T_M$ is defined as follows
$$\widehat{(T_Mf)}(\lambda)= M(\lambda)\hat{f}(\lambda).$$
Here $\hat{f}$ stands for the group Fourier transform on the Heisenberg group.
Again by the Plancherel formula for the group Fourier transform it is immediate that $T_M$ is bounded on $L^2(H^n)$. We are interested in knowing when $T_M$ can be extended as a bounded linear operator on $L^p(H^n)$. In \cite{MM}, Mauceri and De Michele first gave a sufficient condition for $L^p$ multipliers on the Heisenberg group for $n=1$.
Later Chin Cheng Lin (see \cite{L}) generalized their result for other dimensions. Also see \cite{BT} for some other interesting properties of Fourier multiplier on the Heisenberg group.

In \cite{L}, Lin decomposed $M(\lambda)$ in terms of certain partial isometries which form a basis of the space of all Hilbert-Schmidt operators acting on the Fock spaces. Then he expressed the ``difference-differential'' operators in terms of those decompositions. But such difference-differential operators looked very complicated. Also, the proof of Lemma 2 given in \cite{L}, which is the key point in his work, involved very long and technical calculations, even though he gave a proof of that lemma only for  some particular type of polynomials.

The goals of this paper are the following: firstly we will find explicit expressions for the difference-differential operators. Secondly we will give a much simpler proof of the multiplier theorem on the Heisenberg groups and will also reduce the number of difference-differential operators. Thirdly, we will prove a quantitative weighted estimate. In order to state our results first we have to set-up some notation.

Let us consider the annihilation and creation operators
$$A_j(\lambda)=\frac{\partial}{\partial \xi_j}+|\lambda|\xi_j,\qquad A_j^*(\lambda)=-\frac{\partial}{\partial \xi_j}+|\lambda|\xi_j$$
where $j=1,2,..., n$. They are very common operators used in quantum mechanics. The non-commutative derivations of any operator $m$ are given by
$$\delta_j(\lambda) m = |\lambda|^{-\frac{1}{2}}[m, A_j(\lambda)],$$
$$\bar{\delta_j}(\lambda)m=|\lambda|^{-\frac{1}{2}}[A_j^*(\lambda), m]$$
for j=1, 2,..., n. For multiindices $\alpha, \beta\in \N$, define
$$\delta^\alpha(\lambda)=\delta_1^{\alpha_1}(\lambda)\delta_2^{\alpha_2}(\lambda)...\delta_n^{\alpha_n}(\lambda),\qquad \bar{\delta}^\beta(\lambda)=\bar{\delta}_1^{\beta_1}(\lambda)\bar{\delta}_2^{\beta_2}(\lambda)...\bar{\delta}_n^{\beta_n}(\lambda).$$
Given a family of operators $\{m(\lambda):\lambda\in \R\}$, we now consider a new operator $\Theta(\lambda)$ defined as follows
\begin{multline*}
\Theta(\lambda)m(\lambda)\\
=\frac{d}{d\lambda}m(\lambda)+ \frac{1}{2\lambda}[m(\lambda), \xi\cdot \nabla]
+\frac{1}{4\lambda\sqrt{|\lambda|}}\sum_{j=1}^n (\delta_j(\lambda)m(\lambda)A_j^*(\lambda)+ \bar{\delta_j}(\lambda)m(\lambda)A_j(\lambda)).
\end{multline*}

Though the expression of $\Theta(\lambda)$ may look complicated, it corresponds to the difference-differential operator related to t-variable defined in \cite{MM} and \cite{L}. In fact, if $g$ is a Schwartz class function on $H^n$, one can check that $\widehat{(it g)}(\lambda)=\Theta(\lambda)\hat{g}(\lambda)$. The operator $\Theta(\lambda)$ appears implicitly in some other works also. For example, the operator $\Lambda$ appearing in \cite[Proposition 2.4]{LT} is the same as $\Theta(\lambda)$.

An operator-valued function $M:\R\setminus \{0\}\rightarrow B(L^2(\R^n))$ is said to be in $E^k(\R\setminus\{0\})$ if $\delta^\alpha(\lambda) \bar{\delta}^\beta(\lambda) \Theta^s(\lambda)$ are in $B(L^2(\R^n))$ for all $|\alpha|+|\beta|+2s\leq k$ and $\lambda\in \R\setminus\{0\}$.
We also need the following dyadic projections
$$\chi_N(\lambda)=\sum_{2^N\leq(2k+n)|\lambda|<2^{N+1}} P_k(\lambda)$$
where $P_k(\lambda)$ is the projection on the eigenspace corresponding to the eigenvalue $(2k+n)|\lambda|$ of the scaled Hermite operator $H(\lambda)=-\Delta+\lambda^2|x|^2.$

We will prove the following result.
\begin{thm}\label{th:Main}Let $M$ be an operator-valued function which belongs to $E^k(\R\setminus \{0\})$, $k\geq 2[\frac{n+3}{2}]$. Also, assume
$$\sup_{\lambda\in \R \setminus \{0\}}\|M(\lambda)\|\leq C.$$ If $M$ satisfies
$$\sup_{N>0}2^{N(l-n-1)}\int^\infty_{-\infty}\||\lambda|^{-\frac{|\alpha|+|\beta|}{2}}\delta^\alpha(\lambda) \bar{\delta}^\beta(\lambda)\Theta^s(\lambda)M(\lambda)\chi_N(\lambda)\|_{\operatorname{HS}}^2|\lambda|^n d\lambda \leq C$$
for all $\alpha, \beta\in \N^n$, $s\in \N$ satisfying $|\alpha|+|\beta|+2s=l\leq 2[\frac{n+3}{2}]$, then $T_M$ is weak type $(1,1)$ and bounded for $1<p<2$.

\end{thm}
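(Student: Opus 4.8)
The plan is to realize $T_M$ as a group convolution operator and run the Calder\'on--Zygmund machinery on $H^n$, regarded as a space of homogeneous type with the Kor\'anyi gauge $|(z,t)|=(|z|^4+t^2)^{1/4}$, Haar measure $dz\,dt$, and homogeneous dimension $Q=2n+2$. Since $\widehat{T_Mf}(\lambda)=M(\lambda)\hat f(\lambda)$ and $\widehat{K*f}(\lambda)=\hat K(\lambda)\hat f(\lambda)$, the operator is a convolution $T_Mf=K*f$ by the kernel $K$ with $\hat K(\lambda)=M(\lambda)$; the hypothesis $\sup_\lambda\|M(\lambda)\|\le C$ together with the Plancherel formula $\|g\|_2^2=c\int\|\hat g(\lambda)\|_{HS}^2|\lambda|^n\,d\lambda$ makes $T_M$ bounded on $L^2(H^n)$ at once. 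Granting $L^2$-boundedness, the weak type $(1,1)$ bound follows from the H\"ormander integral condition
\[
\sup_{y\ne 0}\int_{|x|>2|y|}\big|K(xy^{-1})-K(x)\big|\,dx\le C,
\]
and then $L^p$ for $1<p<2$ follows by Marcinkiewicz interpolation with the $L^2$ estimate. So the entire problem reduces to verifying this single kernel inequality.

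To extract kernel information I would first decompose the multiplier along the spectral bands, $M=\sum_N M\chi_N$, producing kernels $K_N$ with $\hat K_N(\lambda)=M(\lambda)\chi_N(\lambda)$. Because $\chi_N$ localizes the scaled Hermite operator --- equivalently the sublaplacian $\CL$ --- to $2^N\le(2k+n)|\lambda|<2^{N+1}$, each $K_N$ is concentrated at Kor\'anyi scale $\rho_N=2^{-N/2}$, with $\|K_N\|_2^2\lesssim 2^{N(n+1)}=\rho_N^{-Q}$. The decisive point is that the difference-differential operators on the transform side are multiplication by group coordinates on the kernel side: the identity $\widehat{(itg)}(\lambda)=\Theta(\lambda)\hat g(\lambda)$ shows $\Theta^s$ matches $(it)^s$, while $\delta_j(\lambda),\bar\delta_j(\lambda)$ match multiplication by the horizontal coordinates $z_j,\bar z_j$; this is exactly the explicit computation advertised in the paper. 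Under the Kor\'anyi dilations $z$ has degree $1$ and $t$ has degree $2$, so the monomial attached to $\delta^\alpha\bar\delta^\beta\Theta^s$ has homogeneous degree $l=|\alpha|+|\beta|+2s$. Feeding this through Plancherel, the hypothesis
\[
\sup_N 2^{N(l-n-1)}\int\big\|\lambda^{-\frac{\alpha+\beta}{2}}\delta^\alpha(\lambda)\bar\delta^\beta(\lambda)\Theta^s(\lambda)M(\lambda)\chi_N(\lambda)\big\|_{HS}^2|\lambda|^n\,d\lambda\le C
\]
becomes a weighted moment estimate $\int_{H^n}|x|^{2l}|K_N(x)|^2\,dx\le C\,2^{N(n+1-l)}=C\,\rho_N^{\,2l}\cdot\rho_N^{-Q}$, precisely the bound consistent with $K_N$ being a normalized bump at scale $\rho_N$.

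With the moment bounds at the top level $l=2[\frac{n+3}{2}]$, the verification of the H\"ormander condition is a Sobolev-type Cauchy--Schwarz argument. Writing $w_N(x)=(1+2^{N/2}|x|)$, the moment bound gives $\|w_N^{\,l}K_N\|_2\le C\,2^{N(n+1)/2}$, while $\int_{|x|>R}w_N(x)^{-2l}\,dx\le C\,2^{-N(n+1)}\min\big(1,(2^{N/2}R)^{Q-2l}\big)$, the integral converging because $2l>Q$, which is where the condition $l>Q/2=n+1$ --- guaranteed by $l\ge n+2$ --- is used. Cauchy--Schwarz then yields $\int_{|x|>R}|K_N(x)|\,dx\le C\min\big(1,(2^{N/2}R)^{Q/2-l}\big)$. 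For the difference $K_N(xy^{-1})-K_N(x)$ on the region $2^{N/2}|y|\le1$ I would apply the mean value theorem along the group, bounding the increment by left-invariant derivatives of $K_N$ controlled by the same moment machinery, which gains a factor $2^{N/2}|y|$; for $2^{N/2}|y|\ge1$ I would use the decay estimate above with $R=2|y|$ on each of the two terms. Combining the two regimes,
\[
\sum_N\int_{|x|>2|y|}\big|K_N(xy^{-1})-K_N(x)\big|\,dx\le C\sum_N\min\big(2^{N/2}|y|,(2^{N/2}|y|)^{Q/2-l}\big)\le C,
\]
two convergent geometric series in $2^{N/2}|y|$ summing uniformly in $y$; this is the required H\"ormander condition.

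I expect the \emph{main obstacle} to be the difference estimate rather than the decay estimate: on $H^n$ the group increment $K_N(xy^{-1})-K_N(x)$ couples the horizontal and central directions, so realizing the gain $2^{N/2}|y|$ uniformly --- including when $y$ points along the anisotropic central direction, where a vertical displacement of size $|y|^2$ meets a vertical derivative of size $\rho_N^{-2}$ --- requires controlling the left-invariant derivatives of $K_N$ through the correspondence between the vector fields and the operators $\delta,\bar\delta,\Theta$, while keeping honest track of the nonisotropic homogeneity. A secondary technical point is to justify the operator-to-coordinate dictionary and the Plancherel translation for genuinely operator-valued (not merely scalar spectral) multipliers, so that the messy commutator computations of the earlier proofs are replaced by the clean moment identities used above.
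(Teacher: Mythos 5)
Your overall architecture (reduce to $L^2$ boundedness plus a H\"ormander kernel condition on the homogeneous group, and convert the hypothesis into spatial moment bounds through the dictionary $\delta_j,\bar\delta_j,\Theta \leftrightarrow z_j,\bar z_j,it$) is reasonable, but the step on which everything rests is wrong as stated. The hypothesis of the theorem controls $\bigl(\delta^\alpha(\lambda)\bar\delta^\beta(\lambda)\Theta^s(\lambda)M(\lambda)\bigr)\chi_N(\lambda)$: the multiplier is differentiated \emph{first}, and the sharp spectral cut-off is applied \emph{after}. Your moment bound $\int_{H^n}|x|^{2l}|K_N(x)|^2dx\lesssim 2^{N(n+1-l)}$ would instead require control of $\delta^\alpha(\lambda)\bar\delta^\beta(\lambda)\Theta^s(\lambda)\bigl(M(\lambda)\chi_N(\lambda)\bigr)$, i.e.\ derivatives of the product. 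Since $\delta_j,\bar\delta_j$ are derivations and $\Theta$ contains $\frac{d}{d\lambda}$, Leibniz produces terms in which derivatives fall on $\chi_N$, and these are not only uncontrolled by the hypothesis, they are generically infinite: $\chi_N(\lambda)=\sum_{2^N\le(2k+n)|\lambda|<2^{N+1}}P_k(\lambda)$ is a discontinuous function of $\lambda$ (projections $P_k(\lambda)$ enter and leave the sum as $\lambda$ varies), so $\frac{d}{d\lambda}\bigl(M(\lambda)\chi_N(\lambda)\bigr)$ contains Dirac masses in $\lambda$, and by Plancherel in the $t$-variable $t^sK_N\notin L^2(H^n)$ for $s\ge 1$. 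This is the exact analogue of the Euclidean fact that $\bigl(m\,1_{\{R<|\xi|<2R\}}\bigr)^{\vee}$ has no high moments, which is why the classical H\"ormander theorem uses smooth Littlewood--Paley cut-offs rather than sharp ones. A second, unresolved gap is the regime $2^{N/2}|y|\le 1$: your mean-value argument needs $L^2$ moment bounds on the left-invariant derivatives $X_iK_N$, $Y_iK_N$, $TK_N$, which you flag as the ``main obstacle'' but never derive from the hypothesis.

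The paper's proof is organized precisely so that these issues never arise. It does not form kernels of sharply localized multipliers; instead it verifies the Coifman--Weiss criterion (Theorem 3.1 of \cite{CW}) for the approximate identity $\phi_r$ built from the Laguerre/heat semigroup of the sub-Laplacian, reducing the theorem to the weighted estimates $\int_{H^n}|T_M\psi_r|^2\rho^l\,dz\,dt\lesssim r^{2l-(n+1)}$ of Lemma \ref{lem:main2} (and their gradient version, Lemma \ref{lem:2}). There the Leibniz expansion is performed on $T^\lambda_{M(\lambda)}\psi_r^\lambda$ via Lemma \ref{lem:WZ}, giving compositions of derivatives of $M$ with derivatives/moments of $\psi_r$, and only afterwards is $I=\sum_N\chi_N(\lambda)\chi_N(\lambda)$ inserted \emph{between} the two factors; the hypothesis then applies verbatim to the left factor, while the right factor is handled by the spectral calculus for functions of $H(\lambda)$ (Lemma \ref{lem:lambda version}, Corollary \ref{lem:lemma1}) and the kernel estimates of Lemma \ref{lem:estimate}, which exploit the smooth, rapidly decaying spectral profile of $\psi_r$. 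To salvage your dyadic route you would have to replace $\chi_N$ by smooth cut-offs $\phi(2^{-N}H(\lambda))$ and then control every Leibniz term in which $\delta,\bar\delta,\Theta$ hit $\phi(2^{-N}H(\lambda))$ --- which is exactly the machinery just listed, i.e.\ you would end up reconstructing the paper's argument rather than shortcutting it.
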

The $A_2$ conjecture (now a theorem) was one of the well-known conjectures in harmonic analysis until T. Hyt\"{o}nen (\cite{H1}) solved this for all standard Calder\'on--Zygmund operators, showing the sharp quantitative $L^2(w)$ bound with a linear dependence in the $A_2$ constant $[w]_{A_2}$. For historical developments in this direction, see \cite{B,PV,P,OB,LPR,CMP,AL1,AV}. Recently, M. T. Lacey \cite{L1} extended Hyt\"onen's result to Dini-continuous operators and A. Lerner \cite{AL1} found a simple proof of Lacey's result. In this article we prove similar results for multipliers on the Heisenberg group.

A weight $w$ is a nonnegative locally integrable function defined on $H^n$.  Given $1<p<\infty$, the Muckhenhoupt class of weights $A_p$ consists of all $w$ satisfying
$$
[w]_{A_p}:=\sup_Q \langle w\rangle_{Q}\langle \sigma \rangle_{Q}^{p-1}<\infty,\quad \sigma:= w^{1-p'}
$$
where the supremum is taken over all cubes $Q$ in $H^n$. Here, $\langle w\rangle_{Q}=\frac{1}{|Q|}\int_Q w$. We will prove the following result.
\begin{thm}\label{th:Main1}Let $M$ be an operator-valued function with each entry in $E^k(\R\setminus \{0\})$, $k\geq 2[\frac{n+3}{2}]$. Also, assume
$$\sup_{\lambda\in \R \setminus \{0\}}\|M(\lambda)\|\leq C.$$ If $M$ satisfies
$$\sup_{N>0}2^{N(l-n-1)}\int^\infty_{-\infty}\||\lambda|^{-\frac{|\alpha|+|\beta|}{2}}\delta^\alpha(\lambda) \bar{\delta}^\beta(\lambda)\Theta^s(\lambda)M(\lambda)\chi_N(\lambda)\|_{HS}^2|\lambda|^n d\lambda \leq C$$
for all $\alpha, \beta\in \N^n$, $s\in \N$ satisfying $|\alpha|+|\beta|+2s=l\leq 2[\frac{n+3}{2}]$, then
$$\|T_Mf\|_{L^p(w)}\leq C [w]^{\max\{1,\frac{1}{p-2}\}}\|f\|_{L^p(w)}$$
for all $w\in A_{\frac{p}{2}}(H^n)$, $2<p<\infty$.
\end{thm}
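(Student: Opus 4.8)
The plan is to deduce the weighted bound from a \emph{sparse domination} principle, in the spirit of Lerner's proof of the $A_2$ theorem cited above, but carried out in a quadratic (vector-valued) form dictated by the Hilbert--Schmidt nature of the hypothesis. Throughout I regard $H^n$ as a space of homogeneous type, with its Haar measure and Kor\'anyi gauge, so that the dyadic and sparse machinery is available. The fact that the weight class is $A_{\frac{p}{2}}$ rather than $A_p$, and that the range is $2<p<\infty$, already signals that the natural object is not $T_M$ as a single Calder\'on--Zygmund operator but a square function built from it.

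First I would extract from the proof of Theorem \ref{th:Main} the underlying square function. The decomposition of $T_M$ along the spectral projections $\chi_N(\lambda)$ produces pieces $T_{M_N}$, and the estimates established there show that the weighted $L^p$ bound for $T_M$, $p>2$, is controlled by that of the square function $Sf=\big(\sum_N |T_{M_N}f|^2\big)^{1/2}$. The key point is that $S$ is an $\ell^2$-valued singular integral operator on $H^n$ whose vector-valued convolution kernel satisfies the standard size and smoothness (H\"ormander) estimates. Verifying these kernel estimates is where the hypotheses enter: the membership $M\in E^k$ together with the H\"ormander-type condition $\sup_N 2^{N(l-n-1)}\int \|\lambda^{-\frac{\alpha+\beta}{2}}\delta^\alpha(\lambda)\bar{\delta}^\beta(\lambda)\Theta^s(\lambda) M(\lambda)\chi_N(\lambda)\|_{HS}^2|\lambda|^n\,d\lambda\leq C$ are precisely the quantitative inputs needed to bound the kernel of each $T_{M_N}$ and to sum the pieces in the vector-valued norm. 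Propagating these operator-side differential--difference bounds into uniform $\ell^2$-valued kernel estimates on the group is the main obstacle; it is essentially a repackaging of the analysis already carried out for Theorem \ref{th:Main}.

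With the $\ell^2$-valued kernel estimates in hand, I would run Lerner's local mean oscillation / Lacey sparse argument in the vector-valued setting to produce a sparse family $\mathcal{S}$ for which
$$\big\langle (Sf)^2,\,g\big\rangle \;\leq\; C\sum_{Q\in\mathcal{S}}\langle |f|\rangle_Q^{\,2}\,\langle g\rangle_Q\,|Q|$$
for all nonnegative $g$. This quadratic sparse form is the natural replacement for the linear sparse operator of the scalar $A_2$ theorem, and it is exactly what forces the weight class to be $A_{\frac{p}{2}}$.

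Finally I would convert the sparse bound into the weighted estimate by duality at the exponent $p/2$. Writing $\|Sf\|_{L^p(w)}^2=\|(Sf)^2\|_{L^{p/2}(w)}$ and dualizing against $L^{(p/2)'}(w)$ with $(p/2)'=\frac{p}{p-2}$, the standard sharp computation for the sparse form (splitting into the part governed by the weighted maximal function and its dual) yields $\|(Sf)^2\|_{L^{p/2}(w)}\leq C[w]_{A_{p/2}}^{\,2\max\{1,\frac{1}{p-2}\}}\|f\|_{L^p(w)}^2$; taking square roots gives the asserted bound $\|T_Mf\|_{L^p(w)}\leq C[w]^{\max\{1,\frac{1}{p-2}\}}\|f\|_{L^p(w)}$. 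The restriction $2<p<\infty$ is precisely what makes the duality at level $p/2$ legitimate, while the complementary range $1<p<2$ is covered by Theorem \ref{th:Main}.
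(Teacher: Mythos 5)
Your proposal contains two genuine gaps, and they are linked. First, the reduction of $T_M$ to the square function $Sf=\bigl(\sum_N|T_{M_N}f|^2\bigr)^{1/2}$ is asserted but never justified: there is no pointwise bound $|T_Mf|\lesssim Sf$, and controlling $\|T_Mf\|_{L^p(w)}$ by $\|Sf\|_{L^p(w)}$ would require a weighted Littlewood--Paley inequality (with sharp dependence on $[w]$) for the spectral decomposition $\chi_N(\lambda)$ on $H^n$ --- a substantial theorem in its own right, which is neither proved in the paper nor an ingredient of Theorem \ref{th:Main}. Second, and more fundamentally, the claim that the ($\ell^2$-valued) kernel ``satisfies the standard size and smoothness (H\"ormander) estimates'' cannot be extracted from the hypotheses. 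The assumptions are Plancherel/Hilbert--Schmidt conditions, and via Cauchy--Schwarz they yield only $L^2$-averaged weighted kernel bounds (this is exactly the content of Lemmas \ref{lem:main2} and \ref{lem:2}), not pointwise or $L^1$-H\"ormander regularity. If the standard estimates did hold, $T_M$ would be a genuine Calder\'on--Zygmund operator and one would obtain $A_p$ bounds in the full range $1<p<\infty$; the fact that the theorem is stated for $A_{p/2}$, $p>2$, reflects precisely that only $L^2$-type regularity is available. For the same reason your sparse form $\sum_{Q\in\mathcal{S}}\langle|f|\rangle_Q^2\langle g\rangle_Q|Q|$, built from squared $L^1$ averages, is out of reach: with $L^2$ kernel regularity one can only prove the version with $\langle|f|^2\rangle_Q$ in place of $\langle|f|\rangle_Q^2$.

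Once that replacement is made, the square function becomes unnecessary, and you are led to what the paper actually does: dominate $T_M$ itself, pointwise, by the sparse operator with local $L^2$ averages, $\mathcal{A}_{2,\mathcal{S}}f=\sum_{Q\in\mathcal{S}}\bigl(\frac{1}{|3Q|}\int_{3Q}|f|^2\bigr)^{1/2}\chi_Q$. Concretely, the paper truncates $T_M$ into the pieces $T_j=T\psi_j\ast\cdot$, forms the grand maximal truncation $\mathcal{T}^{N*}$, and uses the weighted $L^2$ kernel estimates of Section 4 to prove the pointwise bound $\mathcal{T}^{N*}f\lesssim \Lambda(T^Nf)+\Lambda_2 f+\Lambda(\Lambda_2 f)$ (Theorem \ref{thm:Strong}), hence weak $(2,2)$ for $\mathcal{T}^{N*}$ and for $\mathcal{M}_{T^N}$ (Lemma \ref{lem:pointwise}); then Lerner's recursive stopping-time argument from \cite{AL1}, run with $L^2$ averages, gives the pointwise sparse domination, and the known weighted bound for $\mathcal{A}_{2,\mathcal{S}}$ on $L^p(w)$, $w\in A_{p/2}$, $p>2$, finishes the proof. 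Your final duality computation at exponent $p/2$ is essentially the right way to exploit such an $L^2$-average sparse bound, but the two steps that would feed it --- the square-function reduction and the standard kernel estimates --- do not hold as stated.
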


We notice that if $n$ is even then the number of derivatives required in the theorems stated above equals to $n+2$. Also, when $n$ is odd, we have to consider one more extra derivative in the hypothesis because of some technical reasons. Therefore one could expect that the results are not sharp. Infact, for spectral multipliers associated to the sub-Laplacian, one can prove Theorem \ref{th:Main1} using $n+2$ derivatives (see \cite{BFP}) for any given $n$. Proving the above theorems for $n+1$ derivatives are interesting open problems.

The paper is  structured as follows. In Section \ref{sec:preli} we will discuss some preliminaries about Heisenberg groups. In Section \ref{sec:relat} we will show that the differential-difference operators defined here are actually similar to that of \cite{L}. Section \ref{sec:main} is devoted to proving some crucial estimates and also proving Theorem \ref{th:Main}. In Section \ref{sec:proofm}, Theorem \ref{th:Main1} will be proved.

\section{preliminaries}
\label{sec:preli}
Let us consider the Heisenberg group $H^n=\C^n\times \R$ equipped with the group operation
$$(z,t)(w,s)=\Big(z+w, t+s+\frac{1}{2}\Im z\cdot\bar{w}\Big).$$
$H^n$ is a two-step nilpotent Lie group whose center is $\{(0,t): t\in \R\}$. The Haar measure on $H^n$ is simply the Lebesgue measure $dzdt$ on $\C^n\times \R$. The homogeneous norm $|(z,t)|$ is given by $\left(\frac{1}{16}(\sum_{i=1}^n |z_i|^2)^2+t^2\right)^{\frac{1}{4}}$. We will use the notation: $\rho(z,t)=|(z,t)|^4$.

The representation theory of $H^n$ is well studied due to the Stone-von Neumann theorem. The representations which are trivial at the center are merely one dimensional. On the other hand, the representations which are non-trivial at the center are called Schr\"{o}dinger representations and for each $\lambda\in \R\setminus \{0\}$ they are explicitly given by
$$\pi_\lambda (z,t) \phi(\xi)= e^{i\lambda t}e^{i\lambda(x\cdot\xi+\frac{1}{2}x\cdot y)}\phi(\xi+y)$$
where $\phi\in L^2(\R^n)$, the corresponding Hilbert space.
The group Fourier transform of a function $f\in L^1(H^n)$ is given by
$$\hat{f}(\lambda)=\int_{H^n}f(z,t) \pi_{\lambda}(z,t)dzdt.$$
If $f^\lambda$ stands for the inverse Fourier transform of $f$ in the last variable, then the group Fourier transform can be written as
$$\hat{f}(\lambda)=\int_{\C^n} f^\lambda(z) \pi_\lambda(z)dz,$$
where $\pi_\lambda(z)=\pi_\lambda(z,0)$. This leads us to define the Weyl transform of a function $f$ on $L^1(\C^n)$ in the following way:
$$W_\lambda (f)=\int_{\C^n}f(z)\pi_\lambda(z)dz.$$
Thus we have the following relation between the group Fourier transform on the Heisenberg group and the Weyl Transform
$$\hat{f}(\lambda)=W_\lambda(f^\lambda).$$

For a given $g\in L^1\cap L^2(\C^n)$, it can be shown that $W_\lambda(g)$ is a Hilbert--Schmidt operator satisfying
$$\|g\|^2_{L^2}=(2\pi)^{-n}|\lambda|^n \|W_\lambda(g)\|^2_{\operatorname{HS}}.$$ In fact the map $g\rightarrow W_\lambda (g)$ can be extended as an isometric isomorphism from $L^2(\C^n)$ to $S_2$, the space of all Hilbert--Schmidt operators on $L^2(\R^n)$.

From the relation between  Fourier transform on the Heisenberg group and the Weyl transform it is clear that for any given $f\in L^1\cap L^2(H^n)$ and for any $\lambda\in \R^*$, $\hat{f}(\lambda)$ is also a Hilbert--Schmidt operator. The map $f\rightarrow \hat{f}(\lambda)$ extends as an isometric isomorphism from $L^2(H^n)$ to $L^2(\R^*, S_2, (2\pi)^{-n-1}|\lambda|^n d\lambda)$ and the Plancherel theorem can be read as
$$\|f\|^2_{L^2(H^n)}=(2\pi)^{-n-1}\int^\infty_{-\infty} \|\hat{f}(\lambda)\|_{HS}^2 |\lambda|^n d\lambda.$$

Now let $m\in B(L^2(\R^n))$. Consider the operator $T^\lambda_m$ defined as
$$
W_\lambda(T^\lambda_m f)= m W_\lambda (f).
$$
From the Plancherel formula, it is clear that $T^\lambda_m$ is bounded on $L^2(\C^n)$. If $T^\lambda_m$ can be extended as a bounded linear operator on $L^p(\C^n)$ them $m$ is called a (left) Weyl multiplier for $L^p(\C^n)$. Weyl multipliers are studied in \cite{M} and \cite{BT}. On the other hand, see \cite{T1} for more details on the Heisenberg group.

We already defined the Fourier multipliers on the Heisenberg group associated to a uniformly bounded family of operators in the Introduction. If $M=\{M(\lambda)\in B(L^2(\R^n)):\lambda\in \R^*\}$ is a family of operators which are uniformly bounded, it is shown in \cite{BT} that
$$T_M f(z,t)=\frac{1}{2\pi}\int_{-\infty}^{\infty} e^{-i\lambda t} T^\lambda_{M(\lambda)}f^\lambda(z) d\lambda$$
is bounded on $L^2(H^n)$, where $T^\lambda_{M(\lambda)}$ is the Weyl multiplier associated to the operators $M(\lambda)$.

The left invariant vector fields on the Heisenberg group are given by
$$ T= \frac{\partial}{\partial t},\quad X_j=\frac{\partial}{\partial x_j}+\frac{1}{2} y_j \frac{\partial}{\partial t},\quad Y_j=\frac{\partial}{\partial y_j}-\frac{1}{2}x_j\frac{\partial}{\partial t}.$$
The above vectors fields give rise to families of unbounded operators $Z_j(\lambda)$ and $\bar{Z}_j(\lambda)$ defined as
$e^{-i\lambda t} Z_j(\lambda)f(z)=\frac{1}{2}(X_j-i Y_j)(e^{-i\lambda t} f(z))$ and $e^{-i\lambda t} \bar{Z}_j(\lambda)f(z)=\frac{1}{2}(X_j+i Y_j)(e^{-i\lambda t} f(z))$ respectively. Thus we have the following explicit form for $Z_j(\lambda)$ and $\bar{Z_j}(\lambda)$
$$Z_j(\lambda)=\frac{\partial}{\partial z_j}+\frac{\lambda}{4}\bar{z}_j,\qquad\bar{Z}_j(\lambda)=\frac{\partial}{\partial \bar{z}_j}-\frac{\lambda}{4}z_j.$$
The following lemma is well-known.
\begin{lem}\label{lem:WZ1}
For any $\lambda>0$, $f\in L^2(\C^n)$ we have
\begin{enumerate}
\item $W_\lambda(Z_j(\lambda)f)= -\frac{i}{2}W_\lambda(f)A^*_j(\lambda)$, $
W_\lambda(\bar{Z_j}(\lambda)f)=-\frac{i}{2}W_\lambda(f)A_j(\lambda),$  \item $\lambda W_\lambda(z_jf)=i [W_\lambda (f),
A_j(\lambda)]$, $\lambda W_\lambda(\bar{z}_jf)=i
[A_j^*(\lambda),W_\lambda (f)].$
\end{enumerate}

\end{lem}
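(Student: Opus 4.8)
The plan is to reduce both assertions to a single elementary computation: differentiating the explicit kernel $\pi_\lambda(z)\phi(\xi)=e^{i\lambda(x\cdot\xi+\frac12 x\cdot y)}\phi(\xi+y)$ in the variables $z=x+iy$ and recording how the resulting operator intertwines with the $\xi$-operators $A_j(\lambda)$ and $A_j^*(\lambda)$. Since the lemma assumes $\lambda>0$ we may write $|\lambda|=\lambda$ throughout, and we use the Wirtinger derivatives $\frac{\partial}{\partial z_j}=\frac12\big(\frac{\partial}{\partial x_j}-i\frac{\partial}{\partial y_j}\big)$ and $\frac{\partial}{\partial\bar z_j}=\frac12\big(\frac{\partial}{\partial x_j}+i\frac{\partial}{\partial y_j}\big)$.

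First I would compute the two elementary derivatives directly from the formula for $\pi_\lambda(z)$. Differentiating the phase in $x_j$ brings down a factor $i\lambda(\xi_j+\frac12 y_j)$, while differentiating in $y_j$ produces both a factor $\frac{i\lambda}{2}x_j$ from the phase and a term $\pi_\lambda(z)\frac{\partial\phi}{\partial\xi_j}$ from the translation $\phi\mapsto\phi(\cdot+y)$. Assembling these into $\frac{\partial}{\partial z_j}$ and $\frac{\partial}{\partial\bar z_j}$ yields two intertwining identities that express $\frac{\partial}{\partial z_j}\pi_\lambda(z)$ and $\frac{\partial}{\partial\bar z_j}\pi_\lambda(z)$ as $\pi_\lambda(z)$ composed on the right with a first-order operator in $\xi$, plus explicit zeroth-order multiplication terms in $x_j$ and $y_j$.

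For part (1) I would start from $W_\lambda(Z_j(\lambda)f)=\int_{\C^n}\big(\frac{\partial f}{\partial z_j}-\frac{\lambda}{4}\bar z_j f\big)\pi_\lambda(z)\,dz$ and integrate the first summand by parts, transferring $\frac{\partial}{\partial z_j}$ onto $\pi_\lambda(z)$; the boundary terms vanish for $f$ in the Schwartz class, and the identity then extends to $f\in L^1\cap L^2(\C^n)$ by density and the $L^2$-boundedness of $W_\lambda$. Substituting the intertwining identity from the previous step, the multiplication term $-\frac{\lambda}{4}\bar z_j$ is precisely what cancels the spurious multiplication pieces coming from $\frac{\partial}{\partial z_j}\pi_\lambda(z)$, leaving only $\pi_\lambda(z)$ composed on the right with $A_j^*(\lambda)$ times a constant. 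Factoring $W_\lambda(f)=\int f(z)\pi_\lambda(z)\,dz$ out on the left gives $W_\lambda(Z_j(\lambda)f)=iW_\lambda(f)A_j^*(\lambda)$; the companion identity for $\bar Z_j(\lambda)=\frac{\partial}{\partial\bar z_j}+\frac{\lambda}{4}z_j$ is obtained identically by differentiating in $\bar z_j$, the term $+\frac{\lambda}{4}z_j$ again cancelling the unwanted pieces and producing $iW_\lambda(f)A_j(\lambda)$.

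For part (2) I would instead compute the commutator $[A_j(\lambda),\pi_\lambda(z)]$ acting on $\phi$ directly: since $A_j(\lambda)=\frac{\partial}{\partial\xi_j}+\lambda\xi_j$ acts in $\xi$ while $\pi_\lambda(z)$ shifts the argument by $y$, the derivative and multiplication contributions combine so that all terms except those proportional to $x_j$ and $y_j$ cancel, giving $[A_j(\lambda),\pi_\lambda(z)]$ proportional to $\lambda z_j\pi_\lambda(z)$. Multiplying by $f(z)$ and integrating over $\C^n$ then converts multiplication by $z_j$ under the Weyl transform into a commutator with $A_j(\lambda)$, which after rearranging is the stated identity $\lambda W_\lambda(z_j f)=2i[W_\lambda(f),A_j(\lambda)]$; the companion identity for $\bar z_j$ follows from the conjugate commutator $[A_j^*(\lambda),\pi_\lambda(z)]$. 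The only genuine work throughout is the bookkeeping of constants — the half-integer factors from the Wirtinger derivatives, the $\frac12$ in the group law, the coefficient $\frac{\lambda}{4}$ in $Z_j(\lambda),\bar Z_j(\lambda)$, and the coefficient in $A_j(\lambda),A_j^*(\lambda)$ — which must conspire so that the unwanted multiplication terms cancel exactly and the surviving constants reproduce the factors $i$ and $2i$; there is no analytic obstacle beyond the routine justification of the integration by parts.
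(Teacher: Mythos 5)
The paper gives no proof of this lemma at all (it is stated as ``well-known''), and your plan --- differentiate the explicit kernel $\pi_\lambda(z)\phi(\xi)=e^{i\lambda(x\cdot\xi+\frac{1}{2}x\cdot y)}\phi(\xi+y)$, integrate by parts for part (1), and compute $[A_j(\lambda),\pi_\lambda(z)]$ for part (2) --- is exactly the standard argument one would give. The genuine gap is in the step you wave off as ``bookkeeping of constants'': with the conventions as literally stated in this paper, the computation does \emph{not} terminate in the identities of the lemma. For part (2) one finds
\[
[A_j(\lambda),\pi_\lambda(z)]\phi(\xi)=\lambda(ix_j-y_j)\,\pi_\lambda(z)\phi(\xi)=i\lambda z_j\,\pi_\lambda(z)\phi(\xi),
\]
hence $\lambda W_\lambda(z_jf)=i\,[W_\lambda(f),A_j(\lambda)]$ and $\lambda W_\lambda(\bar z_jf)=i\,[A_j^*(\lambda),W_\lambda(f)]$: the constant is $i$, not $2i$. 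For part (1) the problem is worse than a constant: the kernel derivative can be written either as $\partial_{z_j}\pi_\lambda(z)=\frac{i}{2}\pi_\lambda(z)A_j^*(\lambda)+\frac{\lambda}{4}\bar z_j\pi_\lambda(z)$ (operator composed on the right) or as $\partial_{z_j}\pi_\lambda(z)=\frac{i}{2}A_j^*(\lambda)\pi_\lambda(z)-\frac{\lambda}{4}\bar z_j\pi_\lambda(z)$ (operator composed on the left), and with the paper's sign $Z_j(\lambda)=\partial_{z_j}-\frac{\lambda}{4}\bar z_j$ the cancellation you invoke occurs only in the \emph{left}-composed form; in the right-composed form the two $\bar z_j$-terms add rather than cancel, and one then needs part (2) to finish --- which is precisely what moves the operator to the other side. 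What a faithful execution of your plan actually yields is $W_\lambda(Z_j(\lambda)f)=-\frac{i}{2}A_j^*(\lambda)W_\lambda(f)$ and $W_\lambda(\bar Z_j(\lambda)f)=-\frac{i}{2}A_j(\lambda)W_\lambda(f)$: multiplication on the left with constant $-\frac{i}{2}$, not $iW_\lambda(f)A_j^*(\lambda)$ on the right.

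So your method is sound but, carried out against this paper's stated definitions, it proves identities that contradict the lemma as printed. The discrepancy is rooted in the paper's own inconsistent normalizations (compare its first claim $A_j(\lambda)\Phi^\lambda_\mu=\sqrt{\mu_j\lambda}\,\Phi^\lambda_{\mu-e_j}$ with the later ``recall'' $A_j(\lambda)\Phi_\mu^\lambda=(2\mu_j+2)^{1/2}\lambda^{1/2}\Phi_{\mu+e_j}^\lambda$): the lemma is imported from the sources this paper draws on (Bagchi--Thangavelu and Thangavelu's books), where the representation, the operators $A_j(\lambda)$, and the $\lambda$-transform defining $f^\lambda$ carry different factors. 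A complete proof therefore cannot treat the constants as routine conspiracy: you must first fix one consistent set of conventions (kernel of $\pi_\lambda$, normalization of $A_j(\lambda)$, sign of the $t$-transform, which determines the sign of $\frac{\lambda}{4}\bar z_j$ in $Z_j(\lambda)$) and verify the stated constants against it, proving part (2) before part (1) if the paper's sign of $Z_j(\lambda)$ is kept. The side of multiplication is not cosmetic: the right-multiplication form $m(\lambda)W_\lambda(f)A_j^*(\lambda)$ is exactly what the paper uses in the proof of the next lemma, so a proof landing on left multiplication has not established what the paper needs.
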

Note that for the case of $\lambda<0$, we have to replace $A_j(\lambda)$ by $-A_j^*(\lambda)$ and $A_j^*(\lambda)$ by $-A_j(\lambda)$ in the above lemma.

The next lemma gives a expression of derivatives of $T^\lambda_{m(\lambda)}$ which will be used later.
\begin{lem}\label{lem:WZ}$$
\frac{d^k}{d\lambda^k}T^\lambda_{m(\lambda)}f(z)=\sum_{|\alpha|+|\beta|+|\gamma|+|\rho|+2s=2k} C_{\alpha, \beta}|\lambda|^{-\frac{|\alpha|+|\beta|}{2}} T^\lambda_{\delta^\alpha(\lambda) \bar{\delta}^\beta(\lambda)\Theta^s(\lambda) m(\lambda)}(z^\gamma \bar{z}^\rho f)(z).$$
\end{lem}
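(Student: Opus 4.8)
The plan is to prove the identity by induction on $k$, reducing everything to a single clean first-order formula which then propagates. The base engine I would establish is that, for any operator-valued family $m(\lambda)$ and any $\lambda$-independent $f$,
\begin{equation*}
\frac{d}{d\lambda}T^\lambda_{m(\lambda)}f = T^\lambda_{\Theta(\lambda)m(\lambda)}f + \frac{i}{4}\lambda^{-\frac12}\sum_{j=1}^n\left(T^\lambda_{\delta_j(\lambda)m(\lambda)}(\bar z_j f) - T^\lambda_{\bar\delta_j(\lambda)m(\lambda)}(z_j f)\right).
\end{equation*}
This is exactly the case $k=1$ of the lemma: the first summand carries $(\alpha,\beta,\gamma,\rho,s)=(0,0,0,0,1)$, while the two families in the sum carry $(e_j,0,0,e_j,0)$ and $(0,e_j,e_j,0,0)$, each of total weight $|\alpha|+|\beta|+|\gamma|+|\rho|+2s=2$ and with the power $\lambda^{-(|\alpha|+|\beta|)/2}$ dictated by the number of derivations.

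To derive this first-order formula I would differentiate the defining relation $W_\lambda(T^\lambda_{m(\lambda)}f)=m(\lambda)W_\lambda(f)$ in $\lambda$. The only nontrivial ingredient is the $\lambda$-derivative of the Weyl transform itself. Starting from $\widehat{(itg)}(\lambda)=\Theta(\lambda)\hat g(\lambda)$ recorded in the introduction and writing $\hat g(\lambda)=W_\lambda(g^\lambda)$ with $\tfrac{d}{d\lambda}g^\lambda=(itg)^\lambda$, one isolates the derivative of $W_\lambda$ acting on a frozen argument $h$:
\begin{equation*}
\Big[\tfrac{d}{d\lambda}W_\lambda\Big](h)=\frac{1}{2\lambda}[W_\lambda(h),\xi\cdot\nabla]-\frac{1}{2\lambda\sqrt\lambda}\sum_j\big(\delta_j(\lambda)W_\lambda(h)A_j^*(\lambda)+\bar\delta_j(\lambda)W_\lambda(h)A_j(\lambda)\big).
\end{equation*}
Since the left side depends only on $h$ at the given $\lambda$ while $g^\lambda$ may be prescribed arbitrarily there, this holds for all $h$. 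Feeding it into the differentiated defining relation, using that $\delta_j$ and $\bar\delta_j$ are derivations for operator products, and finally replacing $[W_\lambda f,A_j]$ and $[A_j^*,W_\lambda f]$ by multiples of $W_\lambda(z_j f)$ and $W_\lambda(\bar z_j f)$ through Lemma \ref{lem:WZ1}, the terms collapse precisely into the first-order formula above.

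For the inductive step I would apply this formula to a generic summand $\lambda^{-(|\alpha|+|\beta|)/2}T^\lambda_{\delta^\alpha\bar\delta^\beta\Theta^s m}(z^\gamma\bar z^\rho f)$. Two reorderings keep the operator in the canonical shape $\delta^\alpha\bar\delta^\beta\Theta^s$. First, since $[A_i,A_j]=0$ and $[A_i,A_j^*]=2|\lambda|\delta_{ij}$, the Jacobi identity shows that all the derivations $\delta_1,\dots,\delta_n,\bar\delta_1,\dots,\bar\delta_n$ mutually commute, so a newly produced $\delta_j$ or $\bar\delta_j$ simply raises $\alpha$ or $\beta$ by $e_j$. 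Second, and crucially, I claim
\begin{equation*}
[\Theta(\lambda),\delta_j(\lambda)]=\frac{1}{2\lambda}\delta_j(\lambda),\qquad [\Theta(\lambda),\bar\delta_j(\lambda)]=\frac{1}{2\lambda}\bar\delta_j(\lambda),
\end{equation*}
whence $[\Theta,\delta^\alpha\bar\delta^\beta]=\frac{|\alpha|+|\beta|}{2\lambda}\delta^\alpha\bar\delta^\beta$ by a one-line induction, so that $\Theta(\delta^\alpha\bar\delta^\beta\Theta^s m)=\delta^\alpha\bar\delta^\beta\Theta^{s+1}m+\frac{|\alpha|+|\beta|}{2\lambda}\delta^\alpha\bar\delta^\beta\Theta^s m$. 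The extra summand here cancels exactly the term $-\frac{|\alpha|+|\beta|}{2}\lambda^{-(|\alpha|+|\beta|)/2-1}T^\lambda_{\delta^\alpha\bar\delta^\beta\Theta^s m}(z^\gamma\bar z^\rho f)$ coming from differentiating the scalar prefactor. What survives is three families of terms, with $s\mapsto s+1$, or $(\alpha,\rho)\mapsto(\alpha+e_j,\rho+e_j)$, or $(\beta,\gamma)\mapsto(\beta+e_j,\gamma+e_j)$, each raising the total weight from $2k$ to $2k+2$ and carrying the correct power of $\lambda$; this is exactly the asserted form at level $k+1$.

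The main obstacle is the commutation relation $[\Theta,\delta_j]=\frac{1}{2\lambda}\delta_j$, since it is what produces the exact cancellation of the prefactor derivative and hence the clean equality $|\alpha|+|\beta|+|\gamma|+|\rho|+2s=2k$. Rather than expanding $\Theta$ and grinding the commutator out by hand, I would obtain it conceptually: the operators $\mathcal D_j:=2i\lambda^{-1/2}\delta_j$ and $\Theta$ represent, on the Fourier transform side, multiplication of functions on $H^n$ by $z_j$ and by $it$ respectively (the former via Lemma \ref{lem:WZ1}, the latter by the introduction's identity). Since multiplication operators on $H^n$ commute, so do $\mathcal D_j$ and $\Theta$ on the range of the transform; expanding $[\,\Theta,\lambda^{-1/2}\delta_j]=0$ together with the elementary identity $\Theta(\lambda^{-1/2}X)=\lambda^{-1/2}(\Theta X-\frac{1}{2\lambda}X)$ then yields $[\Theta,\delta_j]=\frac{1}{2\lambda}\delta_j$ at once, and the statement for $\bar\delta_j$ follows verbatim.
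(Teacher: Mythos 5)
Your base case is, in substance, the paper's own: the identity you isolate for $\frac{d}{d\lambda}W_\lambda$ is equivalent to Lemma 2.4 of \cite{BT}, which is exactly what the paper combines with Lemma \ref{lem:WZ1} to get its first-order formula; the disagreements in pairing ($\delta_j m$ with $\bar z_jf$ versus $z_jf$) and in constants are immaterial, since the Lemma only asserts existence of the $C_{\alpha,\beta}$. The genuine difference is the inductive step. The paper never commutes $\Theta$ past a $\delta_j$: it strips the derivations off $m$ via the identity $c\,\lambda^{-1/2}T^\lambda_{\delta_jm}(g)=z_jT^\lambda_m(g)-T^\lambda_m(z_jg)$ (and its conjugate), so that the $\lambda$-derivative only ever hits terms of the form $T^\lambda_{\Theta^sm}(\cdot)$, and then folds the external monomials back in; each newly created $\delta_j$ or $\bar\delta_j$ lands to the left of all the $\Theta$'s, so the canonical order $\delta^\alpha\bar\delta^\beta\Theta^s$ is automatic and no commutation rule is ever needed. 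You keep every term in canonical form instead and pay with the identity $[\Theta,\delta_j]=\frac{1}{2\lambda}\delta_j$, earning the exact cancellation with the prefactor derivative and far lighter combinatorics. Your commutativity of the $\delta_j,\bar\delta_j$ among themselves (Jacobi plus $[A_i,A_j^*]=2|\lambda|\delta_{ij}$) is correct, and granted the commutator identity your induction closes cleanly.

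That commutator identity is where the genuine gap lies, for two reasons. First, your argument establishes it only on the range of the group Fourier transform, whereas the induction applies it to $\delta^\alpha\bar\delta^\beta\Theta^sm(\lambda)$ for an arbitrary bounded multiplier family $m$, which is not of that form; to bridge this you must observe that the $\frac{d}{d\lambda}$-contributions cancel inside $[\Theta,\delta_j]$, so the commutator is a pointwise-in-$\lambda$ algebraic operation, and then invoke density of Weyl transforms in the Hilbert--Schmidt class at each fixed $\lambda$. Second, and more seriously, the identity is false for the formula of $\Theta$ displayed in the paper's introduction: direct computation from that formula yields $[\Theta,\delta_j]=\frac{1}{\lambda}(\delta_j-\bar\delta_j)$. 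A concrete check (take $n=1$ and $m$ the multiplication operator by $\xi$, independent of $\lambda$): then $\delta_1 m=\bar\delta_1 m=-\lambda^{-1/2}I$, the displayed formula gives $\Theta m=-\frac{1}{2\lambda}\xi$, hence $\Theta\delta_1m=\delta_1\Theta m=\frac{1}{2}\lambda^{-3/2}I$ and $[\Theta,\delta_1]m=0$, while your relation requires $-\frac{1}{2}\lambda^{-3/2}I$.

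The resolution is that the displayed formula is itself inconsistent with the characterization $\widehat{(itg)}(\lambda)=\Theta(\lambda)\hat g(\lambda)$ on which your derivation (and the paper's Sections 3--4) rest. Differentiating $\hat g(\lambda)=\int_{\C^n}g^\lambda(z)\pi_\lambda(z)\,dz$ under the integral sign, one finds $\partial_\lambda\pi_\lambda(z)=\frac{1}{\lambda}\sum_j\xi_j[\partial_{\xi_j},\pi_\lambda(z)]-\frac{1}{2\lambda}\sum_j[\xi_j,[\partial_{\xi_j},\pi_\lambda(z)]]$, so the operator that the characterization actually defines is
\[\Theta S=\frac{dS}{d\lambda}-\frac{1}{\lambda}\sum_{j=1}^n\xi_j[\partial_{\xi_j},S]+\frac{1}{2\lambda}\sum_{j=1}^n[\xi_j,[\partial_{\xi_j},S]],\]
and for this operator your identity $[\Theta,\delta_j]=\frac{1}{2\lambda}\delta_j$ does hold, identically in $S$, by the same Jacobi/CCR algebra you use elsewhere --- no appeal to the range of the transform is needed, which also cures the first problem. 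So your proof is sound provided you adopt the characterization (equivalently, the corrected formula above) as the definition of $\Theta$ and use it consistently, rederiving your first-order formula from it as well; the structure of that formula, and hence the Lemma, is unaffected, only the constants shift. This is precisely the subtlety to which the paper's clumsier fold-out/fold-in route is immune, and that robustness is the one real advantage of its approach over yours.
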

\begin{proof}
We will assume $\lambda>0$ and prove this lemma only for $k= 1, 2$. For general $k$, it can be proved by induction.
We will show that
\be\label{eq:DT2}\frac{d}{d\lambda}T^\lambda_{m(\lambda)}=T^\lambda_{\Theta(\lambda)m(\lambda)}+\sum_{j=1}^n\Big(\frac{i}{4\sqrt{\lambda}} T^\lambda_{\delta_j (\lambda)m(\lambda)}(\bar{z_j}f^\lambda)-
\frac{i}{4\sqrt{\lambda}}T^\lambda_{{\bar{\delta_j}}(\lambda)m(\lambda)}(z_j f)\Big).
\ee
Now in \cite[Lemma 2.4]{BT} it was found that
\be\label{eq:DT} \frac{d}{d\lambda}T^\lambda_{m(\lambda)}=T^\lambda_{\frac{d}{d\lambda}m(\lambda)}+T^\lambda_{\frac{1}{2\lambda}[m(\lambda), \xi\cdot\nabla]}+\frac{1}{2\lambda}[B,T^\lambda_{m(\lambda)}]\ee
where $B=\sum_{j=1}^n(z_j \frac{\partial}{\partial z_j}+\bar{z}_j\frac{\partial}{\partial \bar{z}_j})$.
Now, as $z_j\frac{\partial}{\partial z_j}+\bar{z}_j\frac{\partial}{\partial \bar{z}_j}= z_j Z_j(\lambda)+\bar{z}_j\bar{Z}_j(\lambda)$, we have
$$[B,T^\lambda_{m(\lambda)}]=\sum_{j=1}^n [z_j Z_j(\lambda)+\bar{z}_j \bar{Z}_j(\lambda), T^\lambda_{m(\lambda)}].$$
Using Lemma \ref{lem:WZ1} one can easily see that
$$W_\lambda(z_j Z_j(\lambda) T^\lambda_{m(\lambda)}f)= \frac{1}{2\sqrt{\lambda}} \delta_j(\lambda)(m(\lambda)W_\lambda(f)A_j^*(\lambda)),$$
$$W_\lambda(\bar{z}_j \bar{Z}_j(\lambda)T^\lambda_{m(\lambda)}f=\frac{1}{2\sqrt{\lambda}} \bar{\delta}_j(\lambda)(m(\lambda)W_\lambda(f)A_j(\lambda)),$$
$$W_\lambda( T^\lambda_{m(\lambda)}(z_j Z_j(\lambda)f))=  \frac{1}{2\sqrt{\lambda}} m(\lambda)\delta_j(\lambda)\Big(W_\lambda(f)A_j^*(\lambda)\Big)$$
and
$$W_\lambda(T^\lambda_{m(\lambda)}(\bar{z}_j \bar{Z}_j(\lambda)f))=\frac{1}{2\sqrt{\lambda}}m(\lambda) \bar{\delta}_j(\lambda) \big(W_\lambda(f)A_j(\lambda)\Big).$$
Putting together the above relations we get
\begin{align*}W_\lambda([z_j \frac{\partial}{\partial z_j}+\bar{z}_j \frac{\partial}{\partial \bar{z}_j},T^\lambda_{m(\lambda)}]f)&=\frac{1}{2\sqrt{\lambda}} \Big( \bar{\delta}_j(\lambda) m(\lambda) \Big)W_\lambda(f) A_j(\lambda)+\frac{1}{2\sqrt{\lambda}} \Big(\delta_j(\lambda)m(\lambda)\Big)W_\lambda(f) A_j^*(\lambda)\\
&= \frac{1}{2} \Big(\bar{\delta}_j(\lambda)m(\lambda)\Big)\delta_j(\lambda)W_\lambda (f)+\frac{1}{2\sqrt{\lambda}}\Big(\bar{\delta_j}(\lambda)m(\lambda)\Big)A_j(\lambda)W_\lambda(f)\\
&\quad  -\frac{1}{2}\Big(\delta_j(\lambda)m(\lambda)\Big)\bar{\delta_j}(\lambda)W_\lambda (f)+\frac{1}{2\sqrt{\lambda}}\Big(\delta_j(\lambda)m(\lambda)\Big)A_j^*(\lambda)W_\lambda(f).
\end{align*}
Using \eqref{eq:DT} and the above equation we get our required result.

Now we will prove the lemma for $k=2$. From \eqref{eq:DT2} we have
$$\frac{d^2}{d\lambda^2}T^\lambda_{m(\lambda)}=\frac{d}{d\lambda}T^\lambda_{\Theta(\lambda)m(\lambda)}+\sum_{j=1}^n\frac{d}{d\lambda}\Big(\frac{i}{4\sqrt{\lambda}} T^\lambda_{\delta_j (\lambda)m(\lambda)}(\bar{z_j}f^\lambda)-
\frac{i}{4\sqrt{\lambda}}T^\lambda_{{\bar{\delta_j}}(\lambda)m(\lambda)}(z_j f)\Big).$$
Now the first term of the right hand side of the above equation can be dealt similarly as we have done previously for $k=1$. So, it enough to consider $\frac{d}{d\lambda}\Big(\frac{1}{\sqrt{\lambda}} T^\lambda_{\bar{\delta_j} (\lambda)m(\lambda)}(z_j f)\Big).$ Observe that
$$\frac{1}{\sqrt{\lambda}} T^\lambda_{\bar{\delta_j} (\lambda)m(\lambda)}(z_j f)(z)=-i\bar{z_j} T^\lambda_{m(\lambda)}(z_jf)(z)+ i T^\lambda_{m(\lambda)}(z_j\bar{z}_j f)(z).$$
Hence, $$\frac{d}{d\lambda}\Big(\frac{1}{\sqrt{\lambda}} T^\lambda_{\bar{\delta_j}(\lambda)m(\lambda)}(z_jf)\Big)(z)=-i\bar{z_j} \frac{d}{d\lambda}T^\lambda_{m(\lambda)}(z_jf)(z)+ i\frac{d}{d\lambda}T^\lambda_{m(\lambda)}(z_j\bar{z}_j f)(z). $$
Both the terms can be handled similarly as in the case $k=1$, and this will lead us to our desired result.
\end{proof}

Let $\Phi_\mu$, $\mu\in \N^n$, stand for the normalised Hermite functions and $$\Phi_\mu^\lambda(\xi)=|\lambda|^{\frac{n}{4}}\Phi_\mu(|\lambda|^\frac{1}{2}\xi).$$ Then it is well-known that
$$A_j(\lambda)\Phi_\mu^\lambda=\sqrt{2\mu_j|\lambda|}\Phi^\lambda_{\mu-e_j}\quad \text{ and } \quad A_j^*(\lambda)\Phi_\mu^\lambda=\sqrt{(2\mu_j+2)|\lambda|}\Phi^\lambda_{\mu+e_j}.$$

Now, we will consider the Hermite multipliers. For any bounded function $a$ on $\N\times\R^*$, they can be defined as
$$a(H(\lambda))=\sum_{k=0}^\infty a(k,\lambda)P_k(\lambda)$$
where $P_k(\lambda)$ is the projection on the eigenspace of $H(\lambda)$ corresponding to the eigenvalue $(2k+n)|\lambda|$. Let us also consider the finite difference operators acting on
$a$
$$\Delta_+a(k,\lambda)=a(k+1,\lambda)-a(k,\lambda)$$
$$\Delta_-a(k,\lambda)=a(k,\lambda)-a(k-1,\lambda).
$$

The next theorem is the $\lambda$-version of \cite[Lemma 2.1]{M} and will be used to prove the crucial estimate in this article.

\begin{lem}\label{lem:lambda version} Let $p, q\in \N^n$. Then there exists constants $C_{p,q,r}$ such that
$$\delta^p(\lambda)\bar{\delta}^q(\lambda) a(H(\lambda))=\sum C_{p,q,r} |\lambda|^{-\frac{|q|+2|r|-|p|}{2}} (A^*(\lambda))^{q+r-p}A^r(\lambda)(\Delta^{|r|}_- \Delta^{|q|}_+ a)(H(\lambda))$$
where the sum is extended over the set of multiindices $r\in \N^n$ such that $0\leq r\leq p\leq q+r$.
\end{lem}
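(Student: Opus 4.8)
The plan is to reduce everything to the canonical commutation relations for $A_j(\lambda),A_j^*(\lambda)$ together with the fact that $a(H(\lambda))$ is diagonal in the Hermite basis, and then to run a two-stage induction, first on $q$ and then on $p$. First I would record the algebraic input, taking $\lambda>0$ without loss of generality (the case $\lambda<0$ is identical with $|\lambda|$ in place of $\lambda$). From $A_j(\lambda)=\partial_{\xi_j}+\lambda\xi_j$ and $A_j^*(\lambda)=-\partial_{\xi_j}+\lambda\xi_j$ one gets $[A_i(\lambda),A_j^*(\lambda)]=2\lambda\,\delta_{ij}$, while $[A_i,A_j]=[A_i^*,A_j^*]=0$, and $H(\lambda)=\sum_j A_j^*(\lambda)A_j(\lambda)+n\lambda$. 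Writing $\tau_\pm$ for the shift $k\mapsto k\pm1$, so that $\Delta_+=\tau_+-1$ and $\Delta_-=1-\tau_-$, one checks on the basis $\Phi_\mu^\lambda$ (using the raising/lowering relations stated above) the intertwining identities $[g(H(\lambda)),A_j(\lambda)]=-A_j(\lambda)(\Delta_- g)(H(\lambda))$ and $[A_j^*(\lambda),g(H(\lambda))]=-A_j^*(\lambda)(\Delta_+ g)(H(\lambda))$. I would also note that the $\delta_j(\lambda)$ mutually commute, and likewise the $\bar\delta_j(\lambda)$, by the Jacobi identity together with $[A_i,A_j]=0$, so the multi-index powers $\delta^p$, $\bar\delta^q$ are unambiguous.

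The heart of the argument is a single-step formula for the action of the derivations on a normal-ordered monomial $(A^*)^a A^b g(H)$ (suppressing $\lambda$). Expanding the commutator by the Leibniz rule and the identities above, and using $A^b A_j^*=A_j^*A^b+2\lambda b_j A^{b-e_j}$ to restore normal order, I would obtain
\[
\delta_j\big((A^*)^a A^b g(H)\big)=-\lambda^{-1/2}(A^*)^a A^{b+e_j}(\Delta_- g)(H)-2\lambda^{1/2}a_j\,(A^*)^{a-e_j}A^b g(H),
\]
together with the companion identity for $\bar\delta_j$, which raises the creation index and applies $\Delta_+$. Thus each $\delta_j$ offers exactly two outcomes: either raise the annihilation index $b$ by $e_j$ and difference $g$ once by $\Delta_-$ (contributing $\lambda^{-1/2}$), or lower the creation index $a$ by $e_j$ with an integer factor (contributing $\lambda^{1/2}$).

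With these in hand I would argue in two stages. Stage one: apply $\bar\delta^q$ to $a(H)$; since the starting monomial has no annihilation part, the companion formula never produces lower-order terms, and iterating gives cleanly $\bar\delta^q a(H)=(-1)^{|q|}\lambda^{-|q|/2}(A^*)^q(\Delta_+^{|q|}a)(H)$. Stage two: apply $\delta^p$ to this monomial and expand by the single-step formula $|p|$ times. If option (i) is chosen with total multiplicity $r$ across the $|p|$ steps, the annihilation index ends at $A^r$, the creation index at $(A^*)^{q+r-p}$, and $\Delta_+^{|q|}a$ is differenced $|r|$ further times by $\Delta_-$, producing $(\Delta_-^{|r|}\Delta_+^{|q|}a)(H)$; collecting the $\lambda$-powers $-|r|/2$ (from option (i)), $+(|p|-|r|)/2$ (from option (ii)) and $-|q|/2$ (from stage one) yields the exponent $-\tfrac{|q|+2|r|-|p|}{2}$, exactly as claimed. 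Summing over all choices absorbs the signs, the factors of $2$ and the integer coefficients into constants $C_{p,q,r}$ depending only on $p,q,r$.

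The only genuine constraints on $r$ are that the creation and annihilation indices remain nonnegative: option (i) can be used at most $p_j$ times in coordinate $j$ (giving $r\le p$), and the final creation index $q+r-p$ must be nonnegative (giving $p\le q+r$), so $0\le r\le p\le q+r$; terms violating the latter vanish automatically through the factor $a_j$. The main obstacle I anticipate is purely organisational: keeping the normal ordering honest at every step (the term $A^b A_j^*$ is precisely what spawns the lower-order summands) and verifying that the path-sum over the two options at each of the $|p|$ steps really collapses into coefficients independent of $a$ and $\lambda$. A clean way to make this rigorous is to induct on $|p|$ with $q$ fixed, using the single-step formula as the inductive step and checking that the index set $\{0\le r\le p\le q+r\}$ is stable under $p\mapsto p+e_j$.
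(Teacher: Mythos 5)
Your proof is correct. Note, however, that the paper offers no proof of this lemma at all: it is quoted as ``the $\lambda$-version of Lemma 2.1 of [M]'' (Mauceri), so there is no internal argument to compare against; your write-up supplies exactly what that citation leaves implicit. The substance of your argument is the standard one underlying Mauceri's lemma, and all the key identities check out: $[A_i(\lambda),A_j^*(\lambda)]=2|\lambda|\delta_{ij}$, the intertwining relations $[g(H(\lambda)),A_j(\lambda)]=-A_j(\lambda)(\Delta_-g)(H(\lambda))$ and $[A_j^*(\lambda),g(H(\lambda))]=-A_j^*(\lambda)(\Delta_+g)(H(\lambda))$ (immediate on the basis $\Phi_\mu^\lambda$), the clean stage-one formula $\bar{\delta}^q(\lambda)a(H(\lambda))=(-1)^{|q|}|\lambda|^{-|q|/2}(A^*(\lambda))^q(\Delta_+^{|q|}a)(H(\lambda))$ (no lower-order terms since the annihilation index is zero), and the two-term single-step formula for $\delta_j$ acting on a normal-ordered monomial. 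The $\lambda$-power bookkeeping $-\tfrac{|q|+2|r|-|p|}{2}$, the final indices $(A^*)^{q+r-p}A^r$ with $\Delta_-^{|r|}\Delta_+^{|q|}$, and the constraint set $0\le r\le p\le q+r$ (with out-of-range terms killed by the vanishing integer factor) all come out exactly as the lemma asserts; the path-dependent integer coefficients indeed collapse into constants depending only on $p,q,r$ once you sum over paths with a common endpoint, which your proposed induction on $|p|$ makes rigorous.
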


\section{The relation between $\delta_j(\lambda)$, $\bar{\delta}(\lambda)$, $\Theta(\lambda)$ and the differential-difference operators defined in \cite{L} and \cite{MM} }
\label{sec:relat}

In this section we will show that the differential-difference operators that we have defined earlier are similar to the differential-difference operators defined in \cite{L}. The only difference between them is that in our case we have realised those operators on $L^2(\R^n)$ whereas C. C. Lin considered operators on Fock spaces, which are actually isomorphic to $L^2(\R^n)$.

In order to define the partial isometries let us first set up some notations. Let $(\lambda, m,\alpha)\in \R^*\times\Z^n\times \N^n$. Define
$$m_i^+=\max\{m_i,0\},\quad m_i^-= -\min\{m_i,0\},\N
$$
$$m^+=(m_1^+, m_2^+,\cdots, m_n^+),\quad m^-=(m_1^-, m_2^-,\cdots, m_n^-).$$
As defined in \cite{L} and \cite{MM}, the partial isometries on $L^2(\R^n)$ can be defined as follows
$$V^m_\alpha(\lambda)\Phi_\mu^\lambda=(-1)^{|m^+|}\delta_{\alpha+m^+,\mu} \Phi_{\alpha+m^{-}}^\lambda, \quad \mbox{when }\lambda>0$$
and
$$V^m_\alpha(\lambda)=[V^m_\alpha(-\lambda)]^*,\quad \mbox{when }\lambda<0.$$

Here, $\delta_{\alpha, \beta}$ stands for the Kronecker delta. Now let us consider a operator $M(\lambda)$ which is a finite linear combination of the partial isometries. That is,
$$M(\lambda)=\sum_{m,\alpha}B(\lambda, m, \alpha) V^m_\alpha(\lambda)$$
where the sum runs over a finite subset of $\Z^n\times \N^n$. We calculate $\delta_j(\lambda)M(\lambda)$, $\bar{\delta}_j(\lambda)M(\lambda)$ and for $\lambda>0$ ,$\Theta(\lambda)M(\lambda)$.
\begin{align*}
\delta_j(\lambda)M(\lambda)\Phi^\lambda_\mu&=|\lambda|^{-\frac{1}{2}}[M(\lambda),A_j(\lambda)]\Phi_\mu^\lambda\\
&= \sqrt{2}\sum_{m,\alpha}B(\lambda, m, \alpha)(\alpha_j+m^+_j+1)^{\frac{1}{2}}(-1)^{|m^+|}\delta_{\alpha+m^++e_j, \mu}\Phi^\lambda_{\alpha+m^-}\\&\quad-\sqrt{2}\sum_{m,\alpha}B(\lambda, m, \alpha)(\alpha_j+m^-_j)^{\frac{1}{2}}(-1)^{|m^+|}\delta_{\alpha+m^+, \mu}\Phi^\lambda_{\alpha+m^--e_j}.
\end{align*}
Now, if $m_j\geq 1$ for all $m$ appearing in the sum, then the above equals to
\be\label{eq:delta}\sqrt{2}\sum_{m,\alpha}B(\lambda, m-e_j, \alpha+e_j)(\alpha_j+1)^{\frac{1}{2}}V^m_\alpha(\lambda)\Phi^\lambda_{\mu}- \sqrt{2}\sum_{m,\alpha}B(\lambda, m-e_j, \alpha)(\alpha_j+m_j)^{\frac{1}{2}}V^m_\alpha(\lambda)\Phi^\lambda_{\mu},\ee
whereas if $m_j\leq 0$, then $\delta_j(\lambda)M(\lambda)\Phi^\lambda_\mu$ equals to
$$\sqrt{2}\sum_{m,\alpha}B(\lambda, m-e_j, \alpha-e_j)\alpha_j^{\frac{1}{2}}V^m_\alpha(\lambda)\Phi^\lambda_{\mu}-\sqrt{2}\sum_{m,\alpha}B(\lambda, m-e_j, \alpha)(\alpha_j-m_j+1)^{\frac{1}{2}}V^m_\alpha(\lambda)\Phi^\lambda_{\mu}.$$
From the above calculation we can easily see that $\frac{1}{\sqrt{|\lambda|}}\delta_j(\lambda)M(\lambda)$ is actually similar to $\Delta_{z_j}$ defined in \cite{L}. Similarly we can also show that $\frac{1}{\sqrt{|\lambda|}}\bar{\delta}_j(\lambda)M(\lambda)$ is actually similar to $\Delta_{\bar{z_j}}$ defined in \cite{L}.

We now look at $\Theta(\lambda)M(\lambda)\Phi_\mu$. First assume $m_j\geq 1$, for all $j$. In order to do so let us first calculate
$\frac{1}{4\lambda\sqrt{\lambda}}\delta_j(\lambda)M(\lambda)A^*(\lambda)\Phi^\lambda_\mu$, which equals to
$$\frac{1}{4\lambda}(2\mu_j+2)^{\frac{1}{2}}\delta_j(\lambda)M(\lambda)\Phi^\lambda_{\mu+e_j}.$$
 Using (\ref{eq:delta}) the above equals to
\begin{align*}&\frac{1}{2\lambda}\sum_{m,\alpha}(\alpha_j+m_j+1)B(\lambda,m,\alpha)V^m_\alpha(\lambda)\Phi^\lambda_{\mu}\\
&\quad -\frac{1}{2\lambda}\sum_{m,\alpha}\sqrt{(\alpha_j+1)(\alpha_j+m_j+1)}B(\lambda,m,\alpha+e_j)V^m_\alpha(\lambda)\Phi^\lambda_{\mu}.
\end{align*}
Similarly, we can show that
\begin{align*}
\frac{1}{4\lambda\sqrt{\lambda}}\bar{\delta_j}(\lambda)M(\lambda)A(\lambda)\Phi^\lambda_\mu&=\frac{1}{2\lambda}\sum_{m,\alpha}\sqrt{\alpha_j(\alpha_j+m_j)}B(\lambda,m,\alpha-e_j)V^m_\alpha(\lambda)\Phi^\lambda_{\mu}\\
&\quad -\frac{1}{2\lambda}\sum_{m,\alpha}(\alpha_j+m_j)B(\lambda,m,\alpha)V^m_\alpha(\lambda)\Phi^\lambda_{\mu}.
\end{align*}
Thus, if $m_j\geq 1$ for all $m\in \Z^n$ appearing in the sum of $M(\lambda)$, we have
\begin{align}
\label{eq:delta+*}
&\frac{1}{4\lambda\sqrt{\lambda}}\delta_j(\lambda)M(\lambda)A^*(\lambda)\Phi^\lambda_\mu+\frac{1}{4\lambda\sqrt{\lambda}}\bar{\delta_j}(\lambda)M(\lambda)A(\lambda)\Phi^\lambda_\mu\\
\notag &=\frac{1}{2\lambda}\sum_{m,\alpha}\sqrt{\alpha_j(\alpha_j+|m_j|)}B(\lambda,m,\alpha-e_j)V^m_\alpha(\lambda)\Phi^\lambda_{\mu}\\
\notag &\quad -\frac{1}{2\lambda}\sum_{m,\alpha}\sqrt{(\alpha_j+1)(\alpha_j+|m_j|+1)}B(\lambda,m,\alpha+e_j)V^m_\alpha(\lambda)\Phi^\lambda_{\mu}\\
\notag &\quad +\frac{1}{2\lambda} \sum_{m,\alpha}B(\lambda,m,\alpha)V^m_\alpha(\lambda)
\end{align}

Similarly we can check that if $m_j\leq 0$, for all $m\in \Z^n$ appearing in the sum of $M(\lambda)$, we will get the same result.

In order to calculate $\frac{d}{d\lambda}M(\lambda)$, let us observe that $$M(\lambda)=\delta_{\sqrt{\lambda}}\sum_{m,\alpha}B(\lambda, m, \alpha) V^m_\alpha(1)\delta_{\sqrt{\lambda}}^{-1},$$ where for any function $f$ on $\R^n$, $\delta_\lambda(f)(\xi)=f(\lambda \xi)$. Hence, it is easy to see that ( see \cite{JST})

\be\label{eq:dM}\frac{d}{d\lambda}M(\lambda)=\sum_{m,\alpha}\frac{\partial}{\partial \lambda}B(\lambda,m,\alpha)V^m_\alpha(\lambda)-\frac{1}{2\lambda}[M(\lambda),x\cdot\nabla].\ee
Hence from (\ref{eq:delta+*}) and (\ref{eq:dM}) and using the definition of $\Theta(\lambda)$ we have
\begin{align}
\label{eq:dM1}\notag\Theta(\lambda)M(\lambda)&=\sum_{m,\alpha}\frac{\partial}{\partial \lambda}B(\lambda,m,\alpha)V^m_\alpha(\lambda)+\frac{n}{2\lambda} \sum_{m,\alpha}B(\lambda,m,\alpha)V^m_\alpha(\lambda)\\
&\quad +\frac{1}{2\lambda}\sum_{m,\alpha}\sum_{j=1}^n\sqrt{\alpha_j(\alpha_j+|m_j|)}B(\lambda,m,\alpha-e_j)V^m_\alpha(\lambda)\\
&\notag\quad -\frac{1}{2\lambda}\sum_{m,\alpha}\sum_{j=1}^n\sqrt{(\alpha_j+1)(\alpha_j+|m_j|+1)}B(\lambda,m,\alpha+e_j)V^m_\alpha(\lambda)
\end{align}
which is similar to $\Delta_t$ defined in \cite{L}.

\section{Some kernel estimates and Proof of Theorem \ref{th:Main}}
\label{sec:main}
As it was done in \cite{MM} and \cite{L}, here also we will use \cite[Theorem 3.1]{CW} in order to prove Theorem \ref{th:Main}. For that we need to find a well-behaved approximate identity which satisfies certain estimate. Let us consider the function
$$\varphi^{n-1}_{k}(z)=L_k^{n-1}\Big(\frac{1}{2}|z|^2\Big)e^{-\frac{1}{4}|z|^2}$$
where $L_k^{n-1}$ are the usual Laguerre polynomials of type $n-1$ and degree $k$. Also, define $\varphi^{n-1}_{k,\lambda}(z)=\varphi^{n-1}_k(\sqrt{|\lambda|}z)$.   Let $\phi_r(z,t)$ be the Fourier transform on the $\lambda$-variable of the function $\phi^\lambda_r$ defined as follows
$$\phi^\lambda_r(z)=C_n\sum_k e^{-2r (2k+n)|\lambda|}|\lambda|^n\varphi^{n-1}_{k,\lambda}(z),$$
where $C^{-1}_n=\int_{H^n}\phi_1(z,t)dzdt$. $\phi_r$ will play the role of approximate identity. In fact, they satisfy the following properties.
\begin{lem}For each $r>0$, let $\phi_r$ be defined above. Then
\begin{enumerate}
\item $\phi_r(z,t)=r^{-n+1}\phi_1(r^{-\frac{1}{2}}z, r^{-1}t)$, $r>0$,
\item $\int_{H^n}|\phi_r(z, t)|(1+\frac{\rho(z,t)}{r^2})^\eta dzdt\leq~C$, for some $\eta>0$,\\
\item $\int_{H^n}\phi_r(z,t)dzdt=1,$\\
\item $\phi_r *\phi_s= \phi_s * \phi_r,$\\
\item $\int_{H^n}|\phi_r((z,t)(z_0,t_0)^{-1})-\phi_r(z,t)|dzdt\leq C~\big( \frac{\rho(z_0,t_0)}{r^2}\big)^\eta,$ where $(z_0, t_0)\in H^n,$
\item $ \phi_r(z,t)=\phi_r(-z,-t)$.
\end{enumerate}

\end{lem}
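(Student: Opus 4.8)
The plan is to recognize $\phi_r$ as a time-reparametrized copy of the heat kernel of the sub-Laplacian $\mathcal{L}=-\sum_{j=1}^n(X_j^2+Y_j^2)$ on $H^n$, and to read each of the six properties off this identification together with the parabolic homogeneity. The starting point is a Weyl-transform computation. Using the classical relation $W_\lambda(\varphi^{n-1}_{k,\lambda})=(2\pi)^n|\lambda|^{-n}P_k(\lambda)$ (the $\lambda$-analogue of the special Hermite identity $W_1(\varphi^{n-1}_k)=(2\pi)^nP_k$), we get $W_\lambda\big((2\pi)^{-n}|\lambda|^n\varphi^{n-1}_{k,\lambda}\big)=P_k(\lambda)$, so term-by-term summation yields $W_\lambda(\phi_r^\lambda)=\sum_k e^{-2r(2k+n)|\lambda|}P_k(\lambda)=e^{-2rH(\lambda)}$. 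Since the group Fourier transform of $\mathcal{L}$ is $H(\lambda)$, this says exactly that $\widehat{\phi_r}(\lambda)=e^{-2rH(\lambda)}$, i.e. $\phi_r$ is the convolution kernel of the semigroup $e^{-2r\mathcal{L}}$. The remaining properties are extracted from this.

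For (1) I would perform, in the defining $\lambda$-integral, the change of variables dictated by the homogeneity of the exponent and of $\varphi^{n-1}_{k,\lambda}(z)=\varphi^{n-1}_k(\sqrt{|\lambda|}z)$; after collecting the Jacobian and the $|\lambda|^n$ factor this rewrites $\phi_r$ as the $L^1$-normalized dilate of $\phi_1$ under the parabolic dilations $\delta_a(z,t)=(az,a^2t)$, which is precisely the scaling identity. Property (3) is then heat conservation: since $\mathcal{L}\mathbf{1}=0$ we have $\mathbf{1}=e^{-2r\mathcal{L}}\mathbf{1}=\mathbf{1}*\phi_r$, and evaluating this convolution gives $\int_{H^n}\phi_r=1$. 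Property (4) is immediate on the Fourier side: $\widehat{\phi_r}(\lambda)$ and $\widehat{\phi_s}(\lambda)$ are both functions of the single operator $H(\lambda)$ and hence commute, so $\widehat{f*g}(\lambda)=\hat g(\lambda)\hat f(\lambda)$ gives $\phi_r*\phi_s=\phi_s*\phi_r$ (both being the kernel of a single power of the semigroup). Property (6) is a direct symmetry check: $\phi_r^\lambda(z)$ depends on $\lambda$ and $z$ only through $|\lambda|$ and $|z|$, so $\phi_r^\lambda(-z)=\phi_r^{-\lambda}(z)=\phi_r^\lambda(z)$; substituting this into $\phi_r(-z,-t)=\int e^{i\lambda t}\phi_r^\lambda(-z)\,d\lambda$ and sending $\lambda\mapsto-\lambda$ returns $\phi_r(z,t)$ (equivalently, $(z,t)^{-1}=(-z,-t)$ and the kernel of the self-adjoint $\mathcal{L}$ is symmetric).

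The quantitative estimates (2) and (5) are where the real work lies, and I would reduce both to $r=1$ via (1). For (2), the scaling turns $\int_{H^n}|\phi_r|(1+\rho/r)^\eta$ into $\int_{H^n}|\phi_1|(1+\rho)^\eta$, so it suffices to show that $\phi_1$ decays fast enough to absorb a fixed polynomial weight; this follows from the Gaussian bound $|\phi_1(z,t)|\le C\,e^{-c|(z,t)|^2}$ for the sub-Laplacian heat kernel (obtained either from Mehler's formula for the generating function of the $\varphi^{n-1}_k$, which puts $\phi_1$ in closed form, or from the general Gaussian bounds (\ref{estim:heat})), since $(1+\rho)^\eta=(1+|(z,t)|^4)^\eta$ is only of polynomial growth. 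For (5) I would again scale to $r=1$, so that $\rho(z_0,t_0)/r$ becomes $\rho$ of the rescaled point and it suffices to bound $\int_{H^n}|\phi_1(g\,g_0^{-1})-\phi_1(g)|\,dg$ by $C\,\rho(g_0)^\eta$. For $\rho(g_0)\ge 1$ the triangle inequality and $\|\phi_1\|_1<\infty$ suffice; for small $g_0$ I would write the difference as an integral of a left-invariant derivative of $\phi_1$ along a path from the identity to $g_0^{-1}$, estimate the integrand using the Gaussian bounds for $X_j\phi_1,\ Y_j\phi_1,\ T\phi_1$, and interpolate between this first-order bound and the trivial $L^1$ bound to produce the H\"older gain $\rho(g_0)^\eta=|g_0|^{4\eta}$ for a small $\eta>0$.

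The main obstacle is exactly the pair (2)--(5): everything rests on quantitative Gaussian decay of $\phi_1$ and of its first-order left-invariant derivatives. The cleanest route is to make $\phi_1$ explicit through Mehler's formula and estimate directly. The delicate point in (5) is that, because the group law is non-abelian (the $\tfrac{i}{2}\Im z\cdot\bar w$ term), the mean-value argument must be run with the left-invariant fields $X_j,Y_j,T$ rather than ordinary difference quotients, and the homogeneity bookkeeping $|g_0|=\rho(g_0)^{1/4}$ must be tracked carefully to land on the stated exponent.
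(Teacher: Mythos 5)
The paper does not actually prove this lemma: its entire ``proof'' is the sentence that the proof is the same as Lemma 1 of \cite{L}. Measured against that citation, your proposal reconstructs precisely the substance of the cited argument: Lin's approximate identity is built from the heat semigroup of the sub-Laplacian, and your Weyl-transform computation $W_\lambda(\phi_r^\lambda)=\sum_k e^{-2r(2k+n)|\lambda|}P_k(\lambda)=e^{-2rH(\lambda)}$ is the correct and standard way to see this. Granting that identification, your treatment of (3), (4), (6) is sound ((4) is indeed cleanest on the Fourier side, since both transforms are functions of the single operator $H(\lambda)$ and hence commute), and your reduction of (2) and (5) by dilation to Gaussian decay of $\phi_1$ and of its first-order left-invariant derivatives, followed by the mean-value argument along a path using the fields $X_j$, $Y_j$, $T$ and interpolation with the trivial $L^1$ bound, is the right route and yields the H\"older gain for any $\eta\le \frac14$. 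Two points should be tightened: in (3), applying $e^{-2r\mathcal{L}}$ to the constant function requires conservativeness (stochastic completeness) of the semigroup, which is known for stratified groups but must be cited, not asserted; and the Gaussian bound for $\phi_1$ should be quoted from the classical sub-Laplacian heat-kernel literature rather than from (\ref{estim:heat}), which in this paper is a \emph{hypothesis} on an abstract operator $L$, not a proven fact about $\mathcal{L}$.

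There is one discrepancy you glossed over, and it matters because your proof of (1) cannot close as written: the stated scaling identity is incompatible with the definition of $\phi_r$ as literally printed. Carrying out your change of variables $\lambda\mapsto\mu/r$ in $\phi_r^\lambda(z)=(2\pi)^{-n}\sum_k e^{-2r(2k+n)|\lambda|}|\lambda|^n\varphi^{n-1}_{k,\lambda}(z)$ gives
$$\phi_r(z,t)=r^{-(n+1)}\phi_1\left(r^{-\frac12}z,\, r^{-1}t\right),$$
i.e.\ $r$ enters as a heat time, homogeneous of degree $2$, whereas the stated identity $\phi_r(z,t)=r^{-\frac{n+1}{2}}\phi_1(r^{-\frac14}z,r^{-\frac12}t)$ and the scale-invariant quotients $\rho/r$ in (2) and (5) treat $r$ as homogeneous of degree $4$ (they correspond to the heat kernel at time comparable to $\sqrt r$). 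This inconsistency is the paper's, not yours, but your claim that the change of variables lands ``precisely'' on the stated identity is false for the definition as written; to complete the proof one must first fix a single convention (either replace $e^{-2r(2k+n)|\lambda|}$ by $e^{-2\sqrt r(2k+n)|\lambda|}$ in the definition of $\phi^\lambda_r$, or restate (1), (2), (5) with $r^2$ in place of $r$), and the paper itself mixes the two normalizations in later sections, so this choice propagates. Once a convention is fixed, every step of your outline goes through.
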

The proof of the lemma is same as \cite[Lemma 1]{L}. Once we have the above approximate identity, let us define $\psi_r=\phi_{\frac{r}{2}}-\phi_r$. Then  \cite[Theorem 3.1]{CW} tells that in order to prove Theorem \ref{th:Main} we only have to prove that there exists $\epsilon>0$ such that
\be\label{eq:Main}\int_{H^n}|T_M \psi_r(z,t)|\Big(1+\Big(\frac{\rho(z,t)}{r^2}\Big)^\epsilon\Big)dzdt<C.\ee

As shown in \cite{L}, we can also show that in order to prove (\ref{eq:Main}) it is enough to prove
\be\label{eq:Main1}\int_{H^n}|T_M \psi_r(x)|^2\rho(x)^{[\frac{n+3}{2}]}dx \leq Cr^{2[\frac{n+3}{2}]-(n+1)},\quad 0<r<\infty.\ee
We first need some results which will be useful for proving (\ref{eq:Main1}).
\begin{thm}
If $f$ is a Schwartz class function in $H^n$, then we have
$$\widehat{(itf)}(\lambda)=\Theta(\lambda)\hat{f}(\lambda).$$
\end{thm}
\begin{proof}For $\alpha, \beta\in \N$, let us consider the special Hermite functions on $\C^n$ defined as follows
$$\Phi^\lambda_{\alpha,\beta}(z)=(2\pi)^{-\frac{n}{2}}(\pi_\lambda(z)\phi^\lambda_\alpha, \phi^\lambda_\beta)$$
where $\phi_\alpha^\lambda$ are Hermite functions and $\pi_\lambda(z)=\pi_\lambda(z,0)$. Then using \cite[Proposition 1.3.2]{T} we can easily see that
\be\label{eq:dM2}|\lambda|^nW_\lambda(\Phi^\lambda_{\alpha+m^+,\alpha+m^-})=(2\pi)^\frac{n}{2} (-1)^{|m^+|}V^m_\alpha(\lambda).\ee
Let $f$ be a Schwartz class function on $H^n$. Then for any $\lambda\in \R^*$ we can write
$$ f^\lambda=\sum_{m, \alpha} B(\lambda, m,\alpha)|\lambda|^n\Phi^\lambda_{\alpha+m^+, \alpha+m^-}.$$
We now calculate $\frac{\partial}{\partial \lambda}f^\lambda$. Using the relation $\Phi^\lambda_{\alpha+m^+, \alpha+m^-}(z)=\Phi_{\alpha+m^+, \alpha+m^-}(\sqrt{\lambda}z)$ one can easily see that
\begin{align*}
\frac{\partial}{\partial \lambda}f^\lambda&= \sum_{m,\alpha} \frac{\partial}{\partial\lambda}B(\lambda,m,\alpha)|\lambda|^n\Phi^\lambda_{\alpha+m^+, \alpha+m^-}+\frac{n}{\lambda}\sum_{m,\alpha} B(\lambda,m,\alpha)|\lambda|^n\Phi^\lambda_{\alpha+m^+, \alpha+m^-}\\
&\quad +\frac{1}{2\lambda}\sum_{m,\alpha} B(\lambda,m,\alpha)|\lambda|^n\sum_{j=1}^n\left((z_j\frac{\partial}{\partial z_j}+\bar{z_j}\frac{\partial}{\partial \bar{z_j}})\Phi_{\alpha+m^+, \alpha+m^-}\right)(\sqrt{\lambda}\,\cdot).
\end{align*}

From \cite[(1.3.17), (1.3.18)]{T} and \cite[(1.3.23)]{T} we have
\begin{align*}
& 2 z_j \frac{\partial}{\partial z_j}\Phi_{\alpha+m^+,\alpha+m^-} \\
&= \frac{i}{2} z_j\big[(2(\alpha_j+m^+_j)+2)^{\frac{1}{2}} \Phi_{\alpha+m^++e_j,\alpha+m^-}+(2(\alpha_j+m^-_j))^{\frac{1}{2}}\Phi_{\alpha, \alpha+m^-- e_j}\big]\\
&= -\frac{1}{2}\big[ (2(\alpha_j+m^+_j)+2)\Phi_{\alpha+m^+,\alpha+m^-}\\
&\quad -(2 (\alpha_j+m^+_j)+2)^{\frac{1}{2}} (2 (\alpha_j+m^-_j)+2)^{\frac{1}{2}} \Phi_{\alpha+m^++e_j,\alpha+m^-+e_j}\big]\\
&\quad -\frac{1}{2}\big[(2(\alpha_j+m^+_j))^{\frac{1}{2}} (2(\alpha_j+m^-_j))^{\frac{1}{2}} \Phi_{\alpha+m^+-e_j,\alpha+m^--e_j}-(2(\alpha_j+m^-_j))\Phi_{\alpha+m^+,\alpha+m^-}\big]\\
&= (\alpha_j+m^+_j+1)^{\frac{1}{2}} (\alpha_j+m^-_j+1)^{\frac{1}{2}}\Phi_{\alpha+m^++e_j, \alpha+m^-+e_j}\\
&\quad -(\alpha_j+m^+_j)^{\frac{1}{2}} (\alpha_j+m^-_j)^{\frac{1}{2}} \Phi_{\alpha+m^+-e_j,\alpha+m^--e_j}+(m_j -1)\Phi_{\alpha+m^+,\alpha+m^-}.
\end{align*}
Similarly, we can also show that
\begin{align*}
2\bar{z}_j \frac{\partial}{\partial \bar{z}_j} \Phi_{\alpha+m^+,\alpha+m^-}&= (\alpha_j+m^+_j+1)^{\frac{1}{2}} (\alpha_j+m^-_j+1)^{\frac{1}{2}} \Phi_{\alpha+m^++e_j,\alpha+m^-+e_j} \\&\quad -(\alpha_j+m^+_j)^{\frac{1}{2}} (\alpha_j+m^-_j)^{\frac{1}{2}}  \Phi_{\alpha+m^+-e_j, \alpha+m^--e_j} \\& \quad-(m_j+1) \Phi_{\alpha, \alpha+m^-}.\end{align*}
Combining the above two relations we get
\begin{align*}
\Big(z_j\frac{\partial}{\partial z_j}+\bar{z}_j\frac{\partial}{\partial \bar{z}_j}\Big) \Phi_{\alpha+m^+,\alpha+m^-}&= ((\alpha_j+m^+_j)+1)^{\frac{1}{2}} ((\alpha_j+m^-_j)+1)^{\frac{1}{2}} \Phi_{\alpha+m^++e_j,\alpha+m^-+e_j}\\&\quad- (\alpha_j+m^+_j)^{\frac{1}{2}} (\alpha_j+m^-_j)^{\frac{1}{2}}  \Phi_{\alpha+m^+-e_j, \alpha+m^--e_j} - \Phi_{\alpha, \alpha+m^-}.\end{align*}


So, we have
\begin{align*}
\frac{\partial}{\partial \lambda}f^\lambda&=\sum_{m,\alpha} \frac{\partial}{\partial\lambda}B(\lambda,m,\alpha)|\lambda|^n\Phi^\lambda_{\alpha+m^+, \alpha+m^-}+\frac{n}{2\lambda}\sum_{m,\alpha} B(\lambda,m,\alpha)|\lambda|^n\Phi^\lambda_{\alpha+m^+, \alpha+m^-}\\&\quad +\frac{1}{2\lambda}\sum_{m,\alpha} B(\lambda,m,\alpha-e_j)\sqrt{\alpha_j(\alpha_j+|m_j|)}|\lambda|^n\Phi^\lambda_{\alpha+m^+, \alpha+m^-}\\
&\quad - \frac{1}{2\lambda}\sum_{m,\alpha} B(\lambda,m,\alpha+e_j)\sqrt{(\alpha_j+1)(\alpha_j+|m_j|+1)}|\lambda|^n\Phi^\lambda_{\alpha+m^+, \alpha+m^-}.\end{align*}
Using (\ref{eq:dM1}) and (\ref{eq:dM2}), we get our required result from the above equation.
\end{proof}

For any function $b$ defined on $\R$, Let us consider the function
$$b(L_\lambda)=\sum_{k=0}^\infty b((2k+n)|\lambda|)|\lambda|^n \varphi^{n-1}_{k,\lambda}.$$
Here $L_\lambda$ stands for special Hermite operators with parameter $\lambda$. Then we have the following corollary.

\begin{cor}\label{lem:lemma1}For any Schwarz class function $b$ defined on $\R$, it holds
\begin{align*} \frac{d}{d\lambda}b(L_\lambda)&= \sum_{k=0}^\infty (2k+n)b'((2k+n)|\lambda)|)|\lambda|^n \varphi^{n-1}_{k,\lambda}\\
&\quad- \frac{1}{2\lambda}\sum_{k=0}^\infty k \Delta_-b((2k+n)|\lambda|)|\lambda|^n \varphi^{n-1}_{k,\lambda}\\
&\quad-\frac{1}{2\lambda}\sum_{k=0}^\infty (k+n) \Delta_+ b((2k+n)|\lambda|)|\lambda|^n \varphi^{n-1}_{k,\lambda}.\end{align*}
\end{cor}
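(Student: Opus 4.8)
The plan is to obtain the corollary as a direct specialization of the theorem just proved, rather than by recomputing from scratch. I would apply that theorem to the particular family $f^\lambda$ whose coefficients are supported on $m=0$ and are radial in $\alpha$, namely $B(\lambda,0,\alpha)=b((2|\alpha|+n)|\lambda|)$ and $B(\lambda,m,\alpha)=0$ for $m\neq 0$. Grouping the resulting sum according to $|\alpha|=k$ and using the standard identification of the Laguerre functions as traces of diagonal special Hermite functions, $\sum_{|\alpha|=k}\Phi^\lambda_{\alpha,\alpha}=c_n\,\varphi^{n-1}_{k,\lambda}$ (see \cite{T}), gives $f^\lambda=c_n\,b(L_\lambda)$, so that $\frac{d}{d\lambda}b(L_\lambda)=c_n^{-1}\frac{\partial}{\partial\lambda}f^\lambda$. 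Since $b$ is Schwartz, all the series involved, together with the rearrangements below, converge absolutely and may be differentiated termwise; this lets the finite-sum identity of the theorem pass to the present (infinite) family, and it is the only analytic point, a routine one.

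Next I would substitute $m=0$ into the final formula of the preceding theorem. Its first and second terms immediately become $\sum_k(2k+n)b'((2k+n)|\lambda|)|\lambda|^n\varphi^{n-1}_{k,\lambda}$ and $\frac{n}{2\lambda}\sum_k b((2k+n)|\lambda|)|\lambda|^n\varphi^{n-1}_{k,\lambda}$. For the two remaining (raising/lowering) terms, setting $|m_j|=0$ turns the weights into $\sqrt{\alpha_j\alpha_j}=\alpha_j$ and $\sqrt{(\alpha_j+1)(\alpha_j+1)}=\alpha_j+1$; summing over $j$ and using $\sum_j\alpha_j=|\alpha|=k$ (respectively $\sum_j(\alpha_j+1)=k+n$) together with $|\alpha\mp e_j|=|\alpha|\mp 1$ collapses them, after grouping by $|\alpha|=k$, to
\[
\frac{1}{2\lambda}\sum_k k\,b((2k+n-2)|\lambda|)|\lambda|^n\varphi^{n-1}_{k,\lambda}-\frac{1}{2\lambda}\sum_k (k+n)\,b((2k+n+2)|\lambda|)|\lambda|^n\varphi^{n-1}_{k,\lambda}.
\]
No further reindexing is needed here, since the coefficient shift $\alpha\mapsto\alpha\mp e_j$ already sits against the diagonal kernel $\Phi^\lambda_{\alpha,\alpha}$ in the theorem's formula.

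The final step is to recognize these three surviving sums as the expansion of the $\Delta_\pm$ expression in the statement. Writing $\Delta_- b((2k+n)|\lambda|)=b((2k+n)|\lambda|)-b((2k+n-2)|\lambda|)$ and $\Delta_+ b((2k+n)|\lambda|)=b((2k+n+2)|\lambda|)-b((2k+n)|\lambda|)$, the $b((2k+n\mp 2)|\lambda|)$ pieces of $-\frac{1}{2\lambda}\sum_k k\,\Delta_- b$ and $-\frac{1}{2\lambda}\sum_k (k+n)\,\Delta_+ b$ reproduce exactly the two shifted sums above, while the leftover diagonal pieces combine through $-k+(k+n)=n$ to give back $\frac{n}{2\lambda}\sum_k b((2k+n)|\lambda|)|\lambda|^n\varphi^{n-1}_{k,\lambda}$, i.e. the second term. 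Matching term by term yields the asserted formula. I expect the only genuine difficulty to be bookkeeping: keeping the three arguments $(2k+n-2)|\lambda|$, $(2k+n)|\lambda|$, $(2k+n+2)|\lambda|$ and their weights $k$, $n$, $k+n$ aligned so that the diagonal remainders assemble precisely into the stated $\frac{n}{2\lambda}$ coefficient. All the real analysis, the $\lambda$-differentiation of the special Hermite functions and the Laguerre identification, has already been carried out in the theorem and in \cite{T}, so no new estimates are required.
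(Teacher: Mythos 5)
Your proposal is correct and follows essentially the same route as the paper's own proof: specialize the preceding theorem to the diagonal family $B(\lambda,0,\alpha)=b((2|\alpha|+n)|\lambda|)$, $B(\lambda,m,\alpha)=0$ for $m\neq 0$, group by $|\alpha|=k$ using $\sum_{|\alpha|=k}\Phi^\lambda_{\alpha,\alpha}=c_n\,\varphi^{n-1}_{k,\lambda}$, and recombine the three shifted sums (with weights $k$, $n$, $k+n$) through the definitions of $\Delta_\pm$. If anything, your explicit remarks on termwise differentiation of the infinite series and on the normalization constant are more careful than the paper, which applies its finite-sum theorem to the infinite family without comment.
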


\begin{proof} As $\varphi^{n-1}_{k,\lambda}=\sum_{|\alpha|=k}\Phi^{\lambda}_{\alpha,\alpha}$ from the proof of the above theorem we have
\begin{align*}
\label{eq:lem1} \frac{d}{d\lambda}b(L_\lambda)&= \sum_{k=0}^\infty (2k+n)b'((2k+n)|\lambda|)|\lambda|^n \varphi^{n-1}_{k,\lambda}+
\frac{n}{2\lambda}\sum_{k=0}^\infty b((2k+n)|\lambda|)|\lambda|^n \varphi^{n-1}_{k,\lambda}\\
&\quad +\frac{1}{2\lambda}\sum_{k=0}^\infty b((2k-2+n)|\lambda|)|\lambda|^n\sum_{|\alpha|=k}\sum_{j=1}^n \alpha_j \Phi_{\alpha,\alpha}(\sqrt{|\lambda|}z)\\&\quad -
\frac{1}{2\lambda}\sum_{k=0}^\infty b((2k+2+n)|\lambda|)|\lambda|^n\sum_{|\alpha|=k}\sum_{j=1}^n (\alpha_j+1) \Phi_{\alpha,\alpha}(\sqrt{|\lambda|}z)\\
&=\sum_{k=0}^\infty (2k+n)b'((2k+n)|\lambda)|)|\lambda|^n \varphi^{n-1}_{k,\lambda}- \frac{1}{2\lambda}\sum_{k=0}^\infty k \Delta_-b((2k+n)|\lambda|)|\lambda|^n \varphi^{n-1}_{k,\lambda}\\&\quad -\frac{1}{2\lambda}\sum_{k=0}^\infty (k+n) \Delta_+ b((2k+n)|\lambda|)|\lambda|^n \varphi^{n-1}_{k,\lambda}.
\end{align*}
Hence the corollary is proved.
\end{proof}
The next lemma is similar to  \cite[Lemma 2]{L}.
\begin{lem}\label{lem:estimate}
The following inequality holds
\be\label{eq:estimate}|\lambda|^{-\frac{|\alpha|+|\beta|}{2}}\|\chi_{N}(\lambda) \delta^\alpha(\lambda) \bar{\delta}^\beta(\lambda)W_\lambda\big(\frac{\partial^l}{\partial\lambda^l}\psi_r^\lambda\big)\|_{\operatorname{op}}\leq C~2^{-N(l+\frac{|\alpha|+|\beta|}{2})}f_{\alpha, \beta,l}(r2^N)\ee
for all $|\alpha|+|\beta|+2l\leq 2 [\frac{n+3}{2}]$, where $f_{\alpha, \beta,l}$ is a rapidly decreasing function.
\end{lem}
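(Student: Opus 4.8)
The plan is to reduce everything to a single scalar symbol and then read off the estimate from the spectral localization. The starting observation is that $\psi_r^\lambda$ is radial: since $\phi_r^\lambda(z)=(2\pi)^{-n}\sum_k e^{-2r(2k+n)|\lambda|}|\lambda|^n\varphi^{n-1}_{k,\lambda}(z)$, the difference $\psi_r^\lambda=\phi_{r/2}^\lambda-\phi_r^\lambda$ is exactly $b_r(L_\lambda)$ with radial symbol $b_r(s)=e^{-rs}-e^{-2rs}=\beta(rs)$, where $\beta(u)=e^{-u}-e^{-2u}$ is a fixed smooth profile vanishing to first order at $u=0$ and with all derivatives bounded by $Ce^{-u}$. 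Using $\varphi^{n-1}_{k,\lambda}=\sum_{|\alpha|=k}\Phi_{\alpha,\alpha}$ together with $|\lambda|^nW_\lambda(\Phi^\lambda_{\alpha,\alpha})=(2\pi)^{n/2}V^0_\alpha(\lambda)$ and $\sum_{|\alpha|=k}V^0_\alpha(\lambda)=P_k(\lambda)$, I would first record that $W_\lambda(\psi_r^\lambda)=c\,b_r(H(\lambda))$, the Hermite multiplier with symbol $b_r$.

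The first real step is to turn $W_\lambda(\frac{\partial^l}{\partial\lambda^l}\psi_r^\lambda)$ into one Hermite multiplier. The dense-class theorem gives $W_\lambda(\partial_\lambda f^\lambda)=(2\pi)^{n/2}\Theta(\lambda)W_\lambda(f^\lambda)$, and Corollary \ref{lem:lemma1} shows that $\Theta(\lambda)$ maps radial multipliers to radial multipliers. Iterating $l$ times I obtain $W_\lambda(\partial_\lambda^l\psi_r^\lambda)=a_{r,l}(H(\lambda))$, where the symbol $a_{r,l}(k,\lambda)$ is a finite sum of terms of the form (polynomial in $2k+n$) times derivatives and finite differences $\Delta_\pm$ of $b_r$ at $s=(2k+n)|\lambda|$, each carrying an explicit power of $1/\lambda$. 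Here the bookkeeping is clean: each application of $\Theta(\lambda)$ either differentiates $b_r$ once, giving $(2k+n)b_r'=s|\lambda|^{-1}b_r'$, or produces a finite difference divided by $2\lambda$; and in both cases the self-similarity $s^{i}b_r^{(i)}(s)=(rs)^{i}\beta^{(i)}(rs)$ exhibits $a_{r,l}(k,\lambda)$ as $|\lambda|^{-l}$ times a fixed rapidly decreasing function of $u=(2k+n)r|\lambda|$.

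Next I insert the non-commutative derivatives. By Lemma \ref{lem:lambda version} with $p=\alpha$, $q=\beta$,
\be
\delta^\alpha(\lambda)\bar{\delta}^\beta(\lambda)a_{r,l}(H(\lambda))=\sum_{0\le\nu\le\alpha\le\beta+\nu}C_{\alpha,\beta,\nu}\,\lambda^{-\frac{\beta+2\nu-\alpha}{2}}(A^*(\lambda))^{\beta+\nu-\alpha}A^\nu(\lambda)\big(\Delta_-^{|\nu|}\Delta_+^{|\beta|}a_{r,l}\big)(H(\lambda)).
\ee
Composing with $\chi_{2^N}(\lambda)$, the operator norm of each summand splits as the norm of $(A^*(\lambda))^{\beta+\nu-\alpha}A^\nu(\lambda)$ on the spectral window times the supremum of the localized symbol. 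For the first factor I use $A_j(\lambda)\Phi^\lambda_\mu=\sqrt{\mu_j|\lambda|}\,\Phi^\lambda_{\mu-e_j}$ and its adjoint: on the support of $\chi_{2^N}$, where $(2k+n)|\lambda|\sim 2^N$, each of the $|\beta|-|\alpha|+2|\nu|$ creation or annihilation factors contributes $(2^N)^{1/2}$. For the symbol I estimate the differences $\Delta_-^{|\nu|}\Delta_+^{|\beta|}a_{r,l}$ at step $2|\lambda|$, again reducing to the self-similar profile and extracting a rapidly decreasing factor in $u=r2^N$. Collecting the powers of $|\lambda|$ from the prefactor $\lambda^{-(\alpha+\beta)/2}$, from $\lambda^{-(\beta+2\nu-\alpha)/2}$ in Lemma \ref{lem:lambda version}, and from the difference steps, and converting them through $(2k+n)|\lambda|\sim 2^N$, leaves the claimed power $2^{-N(l+(|\alpha|+|\beta|)/2)}$, while all the $\beta$-factors assemble into the single rapidly decreasing function $g_{\alpha,\beta,k}(r2^N)$ of the statement. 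Since only the finitely many low orders in the range of the lemma occur, all sums are finite and the profiles involved are uniformly controlled.

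The main obstacle, and exactly the place where the argument of \cite{L} became messy, is the uniform control of the finite differences $\Delta_\pm$ across the whole spectrum. Near the bottom of the window the step $2|\lambda|$ is tiny compared with the scale $2^N$ on which $b_r$ varies, so a difference behaves like a derivative; but near the top of the window, where $k=O(1)$ and $|\lambda|\sim 2^N$, the step is comparable to that scale and no such replacement is legitimate. The remedy I would adopt is to avoid Taylor expansion altogether and bound each difference directly by telescoping, using only the decay $|\beta^{(i)}(u)|\le Ce^{-cu}$; this produces a bound by a single rapidly decreasing $g$ valid simultaneously at every scale and is what shortens the proof.
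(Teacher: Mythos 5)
Your reduction to a single radial Hermite symbol, the transfer of $\partial_\lambda$ to the symbol level via Corollary \ref{lem:lemma1}, the application of Lemma \ref{lem:lambda version}, and the $2^{N/2}$-per-factor count for $A(\lambda),A^*(\lambda)$ on the spectral window are all the same as in the paper's proof. But your central bookkeeping claim --- that $l$ applications of $\Theta(\lambda)$ exhibit the symbol as $|\lambda|^{-l}$ times a fixed rapidly decreasing function of $u=(2k+n)r|\lambda|$ --- is both unjustified as stated and, more importantly, insufficient for the lemma. On the support of $\chi_N(\lambda)$ one only knows $(2k+n)|\lambda|\sim 2^N$, so $|\lambda|^{-l}\sim\bigl((2k+n)2^{-N}\bigr)^{l}$, which exceeds the required $2^{-Nl}$ by the unbounded factor $(2k+n)^{l}$. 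And term by term your bound is actually sharp: each of the three pieces of $\Gamma^k_\lambda=\partial_\lambda-\frac{k}{2\lambda}\Delta_--\frac{k+n}{2\lambda}\Delta_+$ applied to $b_r((2k+n)\lambda)$, namely $(2k+n)b_r'$, $\frac{k}{2\lambda}\Delta_-b_r$ and $\frac{k+n}{2\lambda}\Delta_+b_r$, is genuinely of size $\lambda^{-1}g(r2^N)$. What rescues the estimate is a cancellation among the three: writing $\frac{k}{2\lambda}\Delta_-b_r=k\int_0^1 b_r'\bigl((2(k-u)+n)\lambda\bigr)\,du$ (an identity, by the fundamental theorem of calculus) and similarly for $\Delta_+$, the sum collapses to terms of the form $(2k+n)\lambda\iint b_r''$ plus $n\int b_r'$, which on the window are of size $2^{-N}g(r2^N)$ --- one full dyadic gain per derivative. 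The paper's inductive representation of the Hermite coefficient of $\partial^l_\lambda b_r(H(\lambda))$ as a sum of terms $(2k+n)^m\lambda^m\int b_r^{(l+m)}\bigl((2(k+\sigma+\varsigma)+n)\lambda\bigr)$, using that $\Gamma^k_\lambda$ obeys the Leibniz rule and annihilates $(2k+n)\lambda$, exists precisely to propagate this cancellation to all orders; without it the factor $2^{-N(l+\frac{|\alpha|+|\beta|}{2})}$ is unattainable.

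Your proposed remedy --- avoiding Taylor expansion and bounding each difference ``directly by telescoping, using only $|\beta^{(i)}(u)|\le Ce^{-cu}$'' --- moves in exactly the wrong direction: estimating each difference separately forfeits the cancellation and yields at best bounds of size $\frac{k}{\lambda}e^{-cu}$, worse still than $\lambda^{-1}g$. The difficulty you diagnose at the top of the window is also not real: the replacement of $\Delta_\pm$ by integrals of derivatives is exact, not an approximation, the shifts in $k$ are $O(1)$ so the shifted arguments stay comparable to $2^N$, and the downward difference carries the coefficient $k$, which vanishes at $k=0$, so no negative arguments occur. The same issue recurs in your treatment of the $\delta^\alpha\bar{\delta}^\beta$ step: the differences $\Delta_-^{|\gamma|}\Delta_+^{|\beta|}$ produced by Lemma \ref{lem:lambda version} must each be converted into $\lambda$ times one more derivative of $b_r$ (contributing $\lambda\,2^{-N}$ up to the rapidly decreasing factor) in order to cancel the negative power $\lambda^{-\frac{|\beta|+2|\gamma|-|\alpha|}{2}}$ and land on $\lambda^{\frac{|\alpha|+|\beta|}{2}}\,2^{-N(l+\frac{|\alpha|+|\beta|}{2})}$; pure telescoping leaves those powers of $\lambda$ and $2^N$ unbalanced. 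In short, your skeleton coincides with the paper's, but the one step that actually produces the decay $2^{-N(l+\frac{|\alpha|+|\beta|}{2})}$ is missing, and the fix you propose would destroy it.
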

\begin{rem}
 As we have mentioned earlier, in \cite{L} Lin gave a detailed proof of Lemma~2 only for some very particular type of polynomials of the form $P(z,t)=z_1^a \bar{z_1}^b$ with $a\geq b$ and $P(z,t)=z_1^a \bar{z_2}^b$ and that also involved very long and technical calculations. Now, we have already discussed in the Section \ref{sec:preli} that $\frac{1}{\sqrt{|\lambda|}}\delta_j(\lambda)$ and $\frac{1}{\sqrt{|\lambda|}}\bar{\delta_j}(\lambda)$ are similar to the operators $\Delta_{z_j}$ and $\Delta_{\bar{z}_j}$ defined in \cite{L}, respectively. So, we can now handle the case associated with the operators $|\lambda|^{-\frac{|\alpha|+|\beta|}{2}}\delta^\alpha(\lambda)\bar{\delta}^\beta(\lambda)$ very easily by using Lemma \ref{lem:lambda version}. It turns out to be a little more difficult when we have to consider the operator involving $\Theta(\lambda)$ also.
\end{rem}
\begin{proof}[Proof of Lemma \ref{lem:estimate}]
We will prove this lemma only for $\lambda>0$. The other case can be done similarly. Let us consider the function $$b_r (x)= e^{-\frac{r}{2}x}-e^{-r x}.$$
Then, from Lemma \ref{lem:lemma1} we can see that the Hermite coefficient of $W_\lambda\left(\frac{\partial}{\partial\lambda}b_r(L_\lambda)\right)$ is
$$\Big(\frac{\partial}{\partial\lambda}-\frac{1}{2\lambda}k\Delta_--\frac{1}{2\lambda}(k+n)\Delta_+\Big)b_r((2k+n)\lambda).$$
For convenience we use the notation $\Gamma^k_\lambda$ in place of $\left(\frac{\partial}{\partial\lambda}-\frac{1}{2\lambda}k\Delta_--\frac{1}{2\lambda}(k+n)\Delta_+\right)$. Hence, $\Gamma^k_\lambda b_r((2k+n)\lambda)$ equals to
\begin{align*}
&(2k+n)b_r'((2k+n)\lambda)-k\int_0^1b_r'((2(k-u)+n)\lambda)du\\
&\quad -(k+n)\int_0^1b_r'((2(k+u)+n)\lambda)du\\
&=-\frac{1}{2}(2k+n)\lambda\int^1_0\int_0^u b_r''((2(k-v)+n)\lambda)du\\
&\quad+\frac{1}{2}(2k+n)\lambda\int_0^1\int_0^u b_r''(2(k+v)+n)\lambda)dv\\
&\quad +\frac{n}{2}\int_0^1 b_r'((2(k-u)+n)\lambda)du
-\frac{n}{2}\int_0^1 b_r'((2(k+u)+n)\lambda)du.
\end{align*}
Let $Q\subset \R^2$ be the set enclosed by the three lines namely $x=0$, $y=x$ and $y=1$ and $\sigma_1$, $\sigma_2$ be the two functions on $Q$ defined by $\sigma_1(u_,v)=-v$ and $\sigma_2(u,v)=v$. Also let $\varsigma_1(w)=-w$ and $\varsigma_2(w)=w$, where $w\in [0,1]$. Then the Hermite coefficient of $\frac{\partial}{\partial\lambda}b(H(\lambda))$ can be written as
\begin{align*}
 &-\frac{1}{2}(2k+n)\lambda\int_Q b_r''((2(k+\sigma_1(u,v))+n)\lambda)dudv\\
 &\quad + \frac{1}{2}(2k+n)\lambda\int_Q b_r''(2(k+\sigma_2(u,v))+n)\lambda)dudv\\
 &\quad +\frac{n}{2}\int_0^1 b_r'((2(k+\varsigma_1(w))+n)\lambda)dw
\\&\quad- \frac{n}{2}\int_0^1 b_r'((2(k+\varsigma_2(w))+n)\lambda)dw.
\end{align*}
Now let $Q^m$ be the cartesian product of $m$ copies of $Q$. Let us use the notation $(u,v) $, for any element $(u_1,v_1, \cdots, u_m,v_m)$ of $Q^m$. Then, we claim that the Hermite coefficient of $\frac{\partial^l}{\partial\lambda^l}b_r(H(\lambda))$ can be written as the sum of several terms (the number of terms appearing in the sum depends only on $n$ and $l$) which are of the form
$$(2k+n)^m \lambda^m \int_{Q^m}\int_{[0,1]^{l-m}} g(u,v,w)b_r^{l+m}\left(\left(2\left(k+\sigma(u,v)+\varsigma(w)\right)+n\right)\lambda\right)dudv dw,$$
where $0\leq m\leq l$. Here $g$, $\sigma$ and $\varsigma$ are some bounded functions on $Q^m\times [0,1]^{l-m}$, $Q^m$ and $[0,1]^{l-m}$ respectively where the bounds depend only on $n$ and $l$. Also, $b^{l+m}_r$ stands for $(l+m)$-th derivative of $b_r$.

We will prove our claim using induction on $l$. We already show that the result is true for $l=1$. Suppose the result is true for some $l\in \N$. Since, $\Gamma^k_\lambda$ satisfies the Leibnitz rule and $\Gamma^k_\lambda((2k+n)\lambda)$ vanishes, the Hermite coefficient of $\frac{\partial^{l+1}}{\partial\lambda^{l+1}}b_r(H(\lambda))$ can be written as the sum of several terms which are of the form
\Bea \int_{Q^m}\int_{[0,1]^{l-m}} (2k+n)^m \lambda^m g(u,v,w)\Gamma^k_\lambda
b_r^{l+m}\left(\left(2\left(k+\sigma(u,v)+\varsigma(w)\right)+n\right)\lambda\right)dudvdw
\Eea
where $0\leq m\leq l$ and $C$ is some constant depending only on $n$ and $l$. Also $g$, $\sigma$ and $\varsigma$ are bounded functions on $Q^m\times [0,1]^m$, $Q^m$ and $[0,1]^{l-m}$ respectively. The above expression can be written as the sum of two terms $I_1$ and $I_2$ where
 \begin{align*}
 I_1&=\int_{Q^m\times [0,1]^{l-m}}\big(2k+n\big) (2k+n)^m \lambda^m g(u,v,w)b_r^{l+m+1}\\
 &\qquad \times\big((2(k+\sigma(u,v)+\varsigma(w))+n)\lambda\big)dudvdw\\
 &\quad -
 \int_{Q^m\times [0,1]^{l-m}} k(2k+n)^m \lambda^m \\
 &\qquad \times \int_0^1g(u,v,w)b_r^{l+m+1}((2(k+u')+2\sigma(u,v)+2\varsigma(w)+n)\lambda)du'dudvdw\\
 &\quad - \int_{Q^m\times[0,1]^{l-m}} (k+n)(2k+n)^m \lambda^m\\&\qquad \times\int_0^1g(u,v,w)b_r^{l+m+1}((2(k-u')+2\sigma(u,v)+2\varsigma(w)+n)\lambda)du'dudvdw.
 \end{align*}
and
\begin{multline*}
I_2=\int_{Q^m\times [0,1]^{l-m}} (2k+n)^m \lambda^m 2\big(\sigma(u,v)+\varsigma(w)\big)g(u,v,w)b_r^{l+m+1}\\
 \times\big((2(k+\sigma(u,v)+\varsigma(w))+n)\lambda\big)dudvdw.
\end{multline*}
$I_1$ can be dealt with similarly as in the case $l=1$. On the other hand, $I_2$ can be written as
\begin{multline*}
(2k+n)^m \lambda^m \int_{Q^m}\int_{[0,1]^{l-m}} \int_{[0,1]}\tilde{g}(u,v,w,w')b_r^{l+m+1}\\
\times\big((2(k+\sigma(u,v)+\tilde{\varsigma}(w,w'))+n)\lambda\big)dudv dwdw',
\end{multline*}
where $\tilde{g}(u,v,w,w')=2\big(\alpha(u,v)+\beta(w)\big)g(u,v,w)$ and $\tilde{\varsigma}(w,w')=\varsigma(w)$. Hence our claim is proved.

Now we are in a position to estimate $\delta^\alpha(\lambda) \bar{\delta}^\beta(\lambda)W_\lambda(\frac{\partial^l}{\partial\lambda^l}\psi_r^\lambda)$. From the above discussion and using Lemma \ref{lem:lambda version} one can notice that $\delta^\alpha(\lambda) \bar{\delta}^\beta(\lambda)W_\lambda(\frac{\partial^l}{\partial\lambda^l}\psi_r^\lambda)$ can be written as sum of several operators of the form
$$ \int_{Q^m}\int_{[0,1]^{l-m}} g(u,v,w)\lambda^{-\frac{|\beta|+2|\gamma|-|\alpha|}{2}}\left(A^*(\lambda)\right)^{\alpha+\gamma-\beta}A^\gamma(\lambda)D^{|\gamma|}_-D^{|\beta|}_+\tilde{b}^{l,m}_{r,u,v}(H(\lambda))dudv dw,$$
where $\gamma\in \N^n$ satisfying $0\leq \gamma\leq \alpha \leq \beta+\gamma$ and $\tilde{b}^{l,m}_{r,u,v}(k,\lambda)= (2k+n)^m \lambda^m b_r^{l+m}\big((2(k+\sigma(u,v)+\varsigma(w))+n)\lambda\big)$. Of course, the total number of term depends only on $n, p, q$ and $l$.
Therefore, in order to prove our lemma we only need to estimate the operator norm of the operators which are of the form
\begin{multline}
\label{int op}
\int_{Q^m}\int_{[0,1]^{l-m}} g(u,v,w)\lambda^{-\frac{|\beta|+2|\gamma|-|\alpha|}{2}}\chi_N(\lambda)\big(A^*(\lambda)\big)^{\alpha+\gamma-\beta}A^\gamma(\lambda)\\
\times D^{|\gamma|}_-D^{|\beta|}_+\tilde{b}^{l,m}_{r, u,v}(H(\lambda))dudv dw.\end{multline}
Since the finite difference operators can be estimated by derivatives, we have
$$|D^{|\gamma|}_-D^{|\beta|}_+\tilde{b}^{l,m}_{r,u,v}(k,\lambda))|\leq C~ |\partial^s_1\tilde{b}^{l,m}_{r, u,v}(k,\lambda)|,$$
where $|\gamma|+|\beta|\leq [\frac{n+3}{2}]$ and $\partial^s_1$ stands for the partial derivative of order $s$ with respect to the first variable. The above can be further dominated by
\be\label{est:b}\sum_{i=0}^s C_{r,m,l} (2k+n)^{m-i}\lambda^{m+s-i} |b_r^{l+m+s-i}((2k+n+\sigma(u,v)+\varsigma(w))\lambda)|.\ee
Now, if $(2k+n)\lambda\sim 2^N$ and $l+m=\vartheta$, it can be shown (\cite[Lemma 2.2]{BT}) that
$$|b_r^{l+m+s-i}((2k+n+\sigma(u,v)+\varsigma(w))\lambda)|\lesssim 2^N r 2^{-N(\vartheta+s-i)}\tilde{f}_{\vartheta, s, i}(2^N r)$$
where $\tilde{f}_{\vartheta,s,i}(x)=x^{\vartheta+s-i} e^{-cx} +x^{\vartheta+s-i-1}e^{-cx}$, a rapidly decreasing function for each $i$.
Hence, (\ref{est:b}) is bounded by a constant multiple of
$$2^N r \lambda^s2^{-N(l+s)}\tilde{f}_{l, s}(2^N r),$$
where $\tilde{f}_{l,r}$ is a rapidly decreasing function.

Recall that $A^*_j(\lambda)\Phi_\mu^\lambda=(2\mu_j+2)^{\frac{1}{2}}\lambda^\frac{1}{2}\Phi_{\mu+e_j}^\lambda$ and $A_j(\lambda)\Phi_\mu^\lambda=(2\mu_j)^{\frac{1}{2}}\lambda^\frac{1}{2}\Phi_{\mu-e_j}^\lambda$. Hence, using the boundedness of the function $g$, the operator norm of (\ref{int op}) can be dominated by
 $$\lambda^{-\frac{|\beta|+2|\gamma|-|\alpha|}{2}}2^{N\frac{|\alpha|+2|\gamma|-|\beta|}{2}}2^N r \lambda^{|\gamma|+|\beta|}2^{-N(l+|\gamma|+|\beta|)}\tilde{f}_{l, r}(2^Rt_{j+1}),$$
 which is equal to
 $$\lambda^{\frac{|\alpha|+|\beta|}{2}}2^{-N(l+\frac{|\alpha|+|\beta|}{2})}f_{\alpha,\beta,l}(r2^N),$$
 where $f_{\alpha,\beta,l}$ is a rapidly decreasing function. Hence the lemma is proved.
\end{proof}
Now we are in a position to prove Theorem \ref{th:Main}. As we have discussed earlier we only have to prove (\ref{eq:Main1}).
\begin{lem}\label{lem:main2} For $l\in\R$ satisfying $0\leq l\leq [\frac{n+3}{2}] $, we have
$$\int_{H^n}|T_M\psi_r(z,t)|^2 \rho(z,t)^ldzdt\lesssim r^{2l-(n+1)}.$$
\end{lem}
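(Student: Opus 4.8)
The plan is to reduce this weighted $L^2$ bound to the two estimates already available: the hypothesis of Theorem \ref{th:Main}, which controls the Hilbert--Schmidt norms of $\lambda^{-\frac{\alpha+\beta}{2}}\delta^\alpha(\lambda)\bar{\delta}^\beta(\lambda)\Theta^s(\lambda)M(\lambda)\chi_N(\lambda)$, and Lemma \ref{lem:estimate}, which controls the operator norms of $\lambda^{-\frac{\alpha+\beta}{2}}\chi_{2^N}(\lambda)\delta^\alpha(\lambda)\bar{\delta}^\beta(\lambda)W_\lambda(\frac{\partial^l}{\partial\lambda^l}\psi_r^\lambda)$. Since $\rho(z,t)=|z|^4+t^2$, I would first expand
\[
\rho(z,t)^l=(|z|^4+t^2)^l=\sum_{a+b=l}\binom{l}{a}|z|^{4a}t^{2b},
\]
so that it suffices to prove $\int_{H^n}|T_M\psi_r|^2|z|^{4a}t^{2b}\,dz\,dt\lesssim r^{2l-(n+1)}$ for each pair $a+b=l$. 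Writing the weight as $|z|^{4a}t^{2b}=\big(|z|^{2a}t^b\big)^2$ with $|z|^{2a}=(\sum_j z_j\bar{z}_j)^a$ a genuine polynomial in $z,\bar{z}$, the Plancherel theorem on $H^n$ gives
\[
\int_{H^n}|T_M\psi_r|^2|z|^{4a}t^{2b}\,dz\,dt=(2\pi)^{-n-1}\int_{-\infty}^{\infty}\big\|\widehat{\big(|z|^{2a}t^b\,T_M\psi_r\big)}(\lambda)\big\|_{HS}^2\,|\lambda|^n\,d\lambda.
\]

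Next I would translate multiplication by $|z|^{2a}t^b$ on $H^n$ into a differential--difference operator acting on $\widehat{T_M\psi_r}(\lambda)=M(\lambda)W_\lambda(\psi_r^\lambda)$. By Lemma \ref{lem:WZ1}(2), multiplication by $z_j$ and $\bar{z}_j$ becomes $\delta_j(\lambda)$ and $\bar{\delta}_j(\lambda)$ up to factors $\lambda^{-1}|\lambda|^{\frac12}$; and multiplication by $t$ becomes $\Theta(\lambda)$ through the identity $\widehat{(itg)}(\lambda)=(2\pi)^{\frac n2}\Theta(\lambda)\hat{g}(\lambda)$ (equivalently, $t=i\partial_\lambda$ on $e^{-i\lambda t}$ followed by integration by parts in $\lambda$). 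Consequently $\widehat{(|z|^{2a}t^b T_M\psi_r)}(\lambda)$ is a finite sum of terms $C\,\lambda^{\nu}\,\mathcal{O}\big[M(\lambda)W_\lambda(\psi_r^\lambda)\big]$, where $\mathcal{O}$ is a composition of $\delta_j,\bar{\delta}_j,\Theta$ of total order $|\alpha|+|\beta|+2s=2(a+b)=2l$. Since $\delta_j,\bar{\delta}_j$ are derivations and $\Theta$ obeys a Leibniz rule modulo correction terms carrying extra factors $A_j(\lambda),A_j^*(\lambda)$, I would distribute $\mathcal{O}$ across the product and recall that $W_\lambda(\psi_r^\lambda)$ is a spectral function of $H(\lambda)$, so that Lemma \ref{lem:lambda version} and Corollary \ref{lem:lemma1} turn $\delta,\bar{\delta}$ and $\partial_\lambda$ acting on it back into the expressions appearing in Lemma \ref{lem:estimate}. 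The net result is a finite sum of products
\[
C\,\lambda^{\nu}\big[\delta^{\alpha_1}\bar{\delta}^{\beta_1}\Theta^{s_1}M(\lambda)\big]\big[\delta^{\alpha_2}\bar{\delta}^{\beta_2}W_\lambda(\tfrac{\partial^{s_2}}{\partial\lambda^{s_2}}\psi_r^\lambda)\big]
\]
(times bounded factors built from $A_j,A_j^*$), with $l_1+l_2=2l$, where $l_1=|\alpha_1|+|\beta_1|+2s_1$ and $l_2=|\alpha_2|+|\beta_2|+2s_2$.

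Then I would estimate each such product by the submultiplicativity $\|XY\|_{HS}\le\|X\|_{HS}\|Y\|_{op}$, assigning the Hilbert--Schmidt norm to the $M$-factor and the operator norm to the $\psi_r$-factor, and arranging the split so that the $\psi_r$-factor never carries more than $[\frac{n+3}{2}]$ derivatives, where Lemma \ref{lem:estimate} is valid. Because $W_\lambda(\psi_r^\lambda)$ is diagonal in the Hermite basis, inserting $\sum_N\chi_N(\lambda)$ localizes the $\psi_r$-factor to a single dyadic block $2^N\le(2k+n)|\lambda|<2^{N+1}$, up to a bounded block-shift from the $\delta$'s. For fixed $N$ the hypothesis of Theorem \ref{th:Main} gives
\[
\int_{-\infty}^{\infty}\big\|\lambda^{-\frac{\alpha_1+\beta_1}{2}}\delta^{\alpha_1}\bar{\delta}^{\beta_1}\Theta^{s_1}M(\lambda)\chi_N(\lambda)\big\|_{HS}^2|\lambda|^n\,d\lambda\le C\,2^{-N(l_1-n-1)},
\]
while Lemma \ref{lem:estimate} gives $\|\lambda^{-\frac{\alpha_2+\beta_2}{2}}\chi_N(\lambda)\delta^{\alpha_2}\bar{\delta}^{\beta_2}W_\lambda(\frac{\partial^{s_2}}{\partial\lambda^{s_2}}\psi_r^\lambda)\|_{op}^2\le C\,2^{-Nl_2}g(r2^N)^2$ with $g$ rapidly decreasing. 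Multiplying, using $l_1+l_2=2l$, and summing over $N$ leaves $\sum_N 2^{-N(2l-n-1)}g(r2^N)^2$, a series governed by the single block $2^N\sim r^{-1}$, which evaluates to $C\,r^{2l-(n+1)}$ — exactly the claimed bound.

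The main obstacle I anticipate is the algebraic bookkeeping in the second step: carrying out the Leibniz expansion of $\mathcal{O}$, and especially of the powers $\Theta^{s}$, across $M(\lambda)W_\lambda(\psi_r^\lambda)$ while tracking the accompanying powers of $\lambda$ and the unbounded factors $A_j(\lambda),A_j^*(\lambda)$, and then organizing the resulting terms so that each factor falls in the range where its estimate applies — in particular so that no more than $[\frac{n+3}{2}]$ derivatives ever land on the $\psi_r$-factor. A secondary technical point is justifying the vanishing of the boundary terms in the $\lambda$-integration by parts and the near-orthogonality used when inserting the projections $\chi_N(\lambda)$.
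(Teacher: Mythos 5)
Your proposal is correct and is essentially the paper's own argument: the paper likewise reduces the weight (via $\rho^l\lesssim |z|^{4l}+t^{2l}$ rather than a full binomial expansion with mixed terms), converts the weights into $\delta^a\bar\delta^b\Theta^s$ acting on $M(\lambda)$ and $\delta^c\bar\delta^d\partial_\lambda^{l_1}$ acting on $\psi_r^\lambda$ (Plancherel in $t$, Leibniz in $\lambda$, Lemma \ref{lem:WZ}, then the Weyl--Plancherel theorem), inserts the dyadic projections $\chi_N(\lambda)$, splits each term as $\|X\|_{HS}\|Y\|_{op}$ using the hypothesis of Theorem \ref{th:Main} on the $M$-factor and Lemma \ref{lem:estimate} on the $\psi_r$-factor, and sums the series $\sum_N 2^{N(n+1-2l)}f(2^Nr)\lesssim r^{2l-(n+1)}$ exactly as you describe. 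The one obstacle you anticipate --- arranging the Leibniz split so that at most $[\frac{n+3}{2}]$ derivatives land on the $\psi_r$-factor --- is both unachievable (Leibniz produces all splits, so up to $2l$ derivatives necessarily land there) and unnecessary: Lemma \ref{lem:estimate}, as proved by induction and as actually used in the paper, is valid for total order up to $2[\frac{n+3}{2}]$ (its stated range is a typo), so every term of the expansion is covered without any rearrangement.
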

\begin{proof} We will prove this lemma only for $l\in \N$. For other $l$ the estimate can be obtained easily by using the estimate of $[l]$ and $[l+1]$.

As $\rho(z,t)^l\lesssim (\sum_{i=1}^n |z_i|^2)^{2l}+ t^{2l}$, we will prove that
\be\label{estim:main2}\int_{H^n}(\sum_{i=1}^n|z_i|^2)^{2l} |T_M\psi_r(z,t)|^2dzdt\lesssim ~r^{2l-(n+1)}\ee
and
\be\label{estim:main3}\int_{H^n}t^{2l} |T_M\psi_r(z,t)|^2dzdt\lesssim ~r^{2l-(n+1)}.\ee
We first prove the estimate (\ref{estim:main3}). Using the Plancherel theorem on the $t$-variable we can observe that the left hand side of (\ref{estim:main3}) equals to a constant multiple of
$$\int_{\R}\|\frac{\partial^l}{\partial \lambda^l} T^\lambda_{M(\lambda)}\psi_r^\lambda\|^2_2d\lambda.$$
Thus by Leibnitz rule, it is enough to prove the following inequality
$$\int_{\R} \|\Big( \frac{\partial^{l-l_1}}{\partial \lambda^{l-l_1}} T^\lambda_{M(\lambda)}\Big)
\Big(\frac{\partial^{l_1}}{\partial \lambda^{l_1}} \psi^\lambda_r\Big)\|^2_2 d\lambda\lesssim ~r^{2l-(n+1)}$$
for $0\leq l_1\leq l$. Applying Lemma \ref{lem:WZ}, the left hand side of the above inequality can be dominated by
 \be\label{estim:main3i}\sum_{|a|+|b|+|c|+|d|+2s=l-l_1} C_{a,b, c,d,s}\int_{\R}\Big\|\frac{1}{|\lambda|^{\frac{|a|+|b|}{2}}}
 T_{\delta^a(\lambda)\bar{\delta}^b(\lambda)\Theta^s(\lambda) M(\lambda)}\Big(z^c \bar{z}^d\frac{\partial^{l_1}}{\partial \lambda^{l_1}}\psi^\lambda_r\Big)\Big\|^2_2d \lambda.\ee
 Therefore, using Plancherel theorem, it is enough to estimate
 \be\label{estim:main3ii}\int_{\R}\frac{1}{|\lambda|^{|\alpha|+|\beta|+|c|+|d|}}\Big\|\delta^a(\lambda) \bar{\delta}^b(\lambda)
 \Theta^s(\lambda) M(\lambda)\delta^c(\lambda) \bar{\delta}^d(\lambda)
W_\lambda(\frac{\partial^{l_1}}{\partial \lambda^{l_1}}\psi^\lambda_r)\Big\|^2_{\operatorname{HS}} |\lambda|^{n}d\lambda,\ee
 where $|a|+|b|+|c|+|d|+2s=l-l_1$.
 Now we can write
 \begin{multline*}
 \delta^a(\lambda) \bar{\delta}^b(\lambda)\Theta^s(\lambda)
 M(\lambda)\delta^c(\lambda) \bar{\delta}^d(\lambda)W_\lambda\Big(\frac{\partial^{l_1}}{\partial \lambda^{l_1}}\psi^\lambda_r\Big)
\\=\sum_{N=0}^\infty \Big(\delta^a(\lambda)\bar{\delta}^b(\lambda)\Theta^s(\lambda)M(\lambda)\Big)\chi_N(\lambda)\cdot
 \chi_N(\lambda)\Big(\delta^c(\lambda) \bar{\delta}^d(\lambda)W_\lambda(\frac{\partial^{l_1}}{\partial \lambda^{l_1}}\psi^\lambda_r)\Big)\end{multline*}
 So, (\ref{estim:main3ii}) can be dominated by
 \begin{multline}
 \label{estim:main5}
 \sum_{N=0}^\infty\int_{\R}\| \lambda^{-\frac{|a|+|b|}{2}}\delta^a(\lambda)\bar{\delta}^b(\lambda)\Theta^s(\lambda)M(\lambda)\chi_N(\lambda)\|^2_{\operatorname{HS}}\\
\times \|\lambda^{-\frac{|c|+|d|}{2}}\chi_N(\lambda)\delta^c(\lambda) \bar{\delta}^d(\lambda)W_\lambda\Big(\frac{\partial^{l_1}}{\partial \lambda^{l_1}}\psi^\lambda_r\Big)\|^2_{\operatorname{op}}|\lambda|^{n}d\lambda.
 \end{multline}
 By Lemma \ref{lem:estimate}, we have
 $$\|\chi_N(\lambda)\delta^c(\lambda) \bar{\delta}^d(\lambda)W_\lambda\Big(\frac{\partial^{l_1}}{\partial \lambda^{l_1}}\psi^\lambda_r\Big)\|^2_{\operatorname{op}}\lesssim |\lambda|^{(|c|+|d|)} 2^{-N(|c|+|d|+2l_1)} f_{c,d,l_1}(2^N r),$$
 where $f_{c,d,l_1}$ is a rapidly decreasing function.
 Using the above estimate and the hypothesis of Theorem \ref{th:Main} we get
 \begin{align*}
 \int_{\R}\|\lambda^{-\frac{|a|+|b|+|c|+|d|}{2}}\delta^a(\lambda) \bar{\delta}^b(\lambda)
 &\Theta^s(\lambda) M(\lambda)\delta^c(\lambda) \bar{\delta}^d(\lambda)
W_\lambda\Big(\frac{\partial^{l_1}}{\partial \lambda^{l_1}}\psi^\lambda_r\Big)\|^2_{\operatorname{HS}} |\lambda|^{n}d\lambda
\\
&\lesssim \sum_{N=0}^\infty 2^{N(n+1-|a|-|b|-2s)} 2^{-N(|c|+|d|+2l_1)}f_{c,d,l_1}(2^Nt_{j+1})\\
&\lesssim \sum_{N=0}^\infty 2^{N(n+1-2l)}f_{c,d,l_1}(2^N r)\\
&\lesssim r^{2l-(n+1)}.
 \end{align*}
We can estimate (\ref{estim:main3}) similarly by observing the fact that it will lead to the estimate of the integral (\ref{estim:main3ii}) with $s=l_1=0$.
\end{proof}

We will conclude this section with the following lemma which will be used in Section~\ref{sec:proofm} in order to prove Theorem \ref{th:Main1}.

\begin{lem}\label{lem:2}
\begin{enumerate}Let $0<r<1$. For $l\leq [\frac{n+1}{2}]$ and $i=1,2,\cdots, n,$ the following estimates are true.
\item $\int_{H^n}|\partial_tT_M\psi_r(z,t)|^2 \rho(z,t)^ldzdt\lesssim r^{2l-n-3}$.\\
\item $\int_{H^n}|X_iT_M\psi_r(z,t)|^2 \rho(z,t)^ldzdt\lesssim r^{2l-n-3}$.\\
\item $\int_{H^n}|Y_iT_M\psi_r(z,t)|^2 \rho(z,t)^ldzdt\lesssim r^{2l-n-3}$.\end{enumerate}
\end{lem}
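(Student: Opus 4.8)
The plan is to prove Lemma \ref{lem:2} by reducing each of the three estimates to the already-established Lemma \ref{lem:main2}. The key observation is that the vector fields $T=\partial_t$, $X_i$, and $Y_i$ interact with the multiplier operator $T_M$ in a controllable way on the Fourier transform side. For the first estimate, applying the Plancherel theorem in the $t$-variable (exactly as in the proof of Lemma \ref{lem:main2}) converts $\int_{H^n}|\partial_t T_M\psi_r|^2\rho^l\,dzdt$ with its $t$-moment into a $\lambda$-integral involving one extra $\lambda$-derivative, since multiplication by $t$ corresponds to $\Theta(\lambda)$ (equivalently $\frac{\partial}{\partial\lambda}$ on the kernel) and differentiation in $t$ brings down a factor of $\lambda$. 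Concretely, I would write
\begin{equation*}
\int_{H^n}|\partial_t T_M\psi_r(z,t)|^2\rho(z,t)^l\,dzdt \lesssim \int_{\R}\Big\|\frac{\partial^l}{\partial\lambda^l}\big(\lambda\, T^\lambda_{M(\lambda)}\psi_r^\lambda\big)\Big\|_2^2\,d\lambda,
\end{equation*}
and the extra factor of $\lambda$ (from $\partial_t$) together with the scaling of $\psi_r$ upgrades the exponent $2l-(n+1)$ coming from Lemma \ref{lem:main2} to $2l-(n+3)$.

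For the second and third estimates, the first step is to move $X_i$ and $Y_i$ past the multiplier. Since $X_i,Y_i$ are left-invariant and $T_M$ is a left-invariant operator (it is convolution on the group side), the vector fields commute with $T_M$, so $X_iT_M\psi_r=T_M(X_i\psi_r)$ and likewise for $Y_i$. I would then use the relations between $X_i,Y_i$ and the operators $Z_j(\lambda),\bar Z_j(\lambda)$ established in the preliminaries together with Lemma \ref{lem:WZ1}, which tells us that $W_\lambda(Z_j(\lambda)f)=iW_\lambda(f)A_j^*(\lambda)$ and $W_\lambda(\bar Z_j(\lambda)f)=iW_\lambda(f)A_j(\lambda)$. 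Thus on the Weyl-transform side, applying $X_i$ or $Y_i$ amounts to multiplying $W_\lambda(\psi_r^\lambda)$ on the right by $A_j^*(\lambda)$ or $A_j(\lambda)$, which on the dyadic block $\chi_N(\lambda)$ contributes a factor of order $(\lambda\,2^N/\lambda)^{1/2}\sim 2^{N/2}$ in operator norm. This single extra half-power of $2^N$ is exactly what converts $2^{N(n+1-2l)}$ into a summable series yielding the improved exponent $2l-(n+3)$.

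Assembling these, I would repeat the argument of Lemma \ref{lem:main2} verbatim: decompose via $\sum_N\chi_N(\lambda)$, factor the Hilbert–Schmidt norm of the multiplier part (bounded by hypothesis of Theorem \ref{th:Main}) against the operator norm of the $\psi_r$ part, and invoke Lemma \ref{lem:estimate} to bound the latter. The presence of the extra creation/annihilation operator shifts the estimate in Lemma \ref{lem:estimate} by replacing $2^{-N(c+d+2l_1)}$ with $2^{-N(c+d+2l_1-1)}$, after which the geometric sum in $N$ produces $r^{2l-(n+3)}$ once the rapidly decreasing factor $f_{c,d,l_1}(2^N r)$ is integrated against $2^{N(n+3-2l)}$.

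The main obstacle will be the bookkeeping for the $X_i,Y_i$ cases, specifically keeping track of how the right-multiplication by $A_j(\lambda)$ or $A_j^*(\lambda)$ interacts with the differential-difference operators $\delta^a(\lambda)\bar\delta^b(\lambda)\Theta^s(\lambda)$ after the Leibnitz expansion from Lemma \ref{lem:WZ}. Since $A_j$ and $A_j^*$ do not commute with the $\delta$'s, one must verify that the commutators produce only lower-order terms that are already controlled, so that the gain of exactly one factor of $2^{N/2}$ is genuine and not offset by an additional power of $\lambda$. I expect this commutator analysis, rather than the scaling count, to be the delicate point; the $\partial_t$ case by contrast is essentially immediate from Lemma \ref{lem:main2} with an extra $\lambda$ factor.
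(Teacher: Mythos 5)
Your proposal is correct and matches the paper's own proof essentially step for step: Plancherel in $t$ together with the split $\rho^l\lesssim(\sum_i|z_i|^2)^{2l}+t^{2l}$ as in Lemma \ref{lem:main2}, absorbing the factor $\lambda$ produced by $\partial_t$ via $|\lambda|\lesssim 2^{N+1}$ on the range of $\chi_N(\lambda)$, and, for $X_i,Y_i$, passing to $Z_i(\lambda),\bar Z_i(\lambda)$ and using Lemma \ref{lem:WZ1} to place $A_i^*(\lambda)$ (resp.\ $A_i(\lambda)$) to the right of the $\psi_r$-factor at a cost of $2^{N/2}$ in operator norm. Two minor corrections: that half-power gain gives $r^{2l-(n+2)}$ in parts (2)--(3), not $r^{2l-(n+3)}$ directly, which still implies the stated bound because $0<r<1$; and the commutator analysis you anticipate is unnecessary, since in the paper's arrangement $A_i(\lambda)$ stays attached to $\chi_N(\lambda)\delta^c(\lambda)\bar{\delta}^d(\lambda)W_\lambda(\partial^{l_1}_\lambda\psi_r^\lambda)$ and is absorbed directly by the Hermite-basis estimate of Lemma \ref{lem:estimate} (via Lemma \ref{lem:lambda version}), the only commutator that actually appears being $[\partial_\lambda,Z_i(\lambda)]=-\tfrac{1}{4}\bar{z}_i$, whose $\bar{z}_i$-moment term is handled exactly as in Lemma \ref{lem:main2}.
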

\begin{proof}
We will first prove (1). Similar to Lemma \ref{lem:main2}, here also we have to estimate
\be\label{estim:main4}\int_{H^n}\Big(\sum_{i=1}^n|z_i|^2\Big)^{2l} |\partial_t T_M\psi_r(z,t)|^2dzdt\lesssim ~r^{2l-n-3}\ee
and
\be\label{estim:main5A}\int_{H^n}t^{2l} |\partial_t T_M\psi_r(z,t)|^2dzdt\lesssim ~r^{2l-n-3}.\ee
We only prove (\ref{estim:main5A}). The estimate (\ref{estim:main4}) can be proved similarly. By Plancherel theorem in the $t$- variable, the left hand side of (\ref{estim:main5}) is a constant multiple of
$$\int_{\C^n}\|\frac{\partial^l}{\partial\lambda^l}\big(\lambda T^\lambda_{M(\lambda)}\big)\psi_r^\lambda\|^2_2d\lambda.$$
To estimate the above integral it is enough to estimate the following two integrals
$$\int_{\C^n}\|\lambda\frac{\partial^l}{\partial\lambda^l} T^\lambda_{M(\lambda)}\psi_r^\lambda\|^2_2d\lambda$$ and $$\int_{\C^n}\|\frac{\partial^{l-1}}{\partial\lambda^{l-1}} T^\lambda_{M(\lambda)}\psi_r^\lambda\|^2_2d\lambda .$$
Using Lemma \ref{lem:main2}, we can see that the second term satisfies our required estimate. So, we only have to prove the same for the first term. Let us observe that if $2^N\leq (2k+n)|\lambda|< 2^{N+1}$, then $|\lambda|< 2^{N+1}$. So,
\begin{align*}
&\int^\infty_{-\infty}|\lambda|^2\|\lambda^{-\frac{\alpha+\beta}{2}}\delta^\alpha(\lambda)
\bar{\delta}^\beta(\lambda)\Theta^s(\lambda)M(\lambda)\chi_{N}(\lambda)\|_{\operatorname{HS}}^2|\lambda|^n d\lambda\\
&\lesssim 2^{N}\int^{2^{N+1}}_{-2^{N+1}}\|\lambda^{-\frac{\alpha+\beta}{2}}\delta^\alpha(\lambda)
\bar{\delta}^\beta(\lambda)\Theta^s(\lambda)M(\lambda)\chi_{N}(\lambda)\|_{\operatorname{HS}}^2|\lambda|^n d\lambda\\
&\lesssim 2^{N}\cdot2^{N(n+1-l)}< 2^{N(n+2-l)}.
\end{align*}
Hence, we can get our required estimate by proceeding similarly as in Lemma \ref{lem:main2} and using the fact that $r<1$.

Now we will prove (2). Again we will only estimate
\be\label{estim:main6}\int_{H^n}t^{2l} |X_i T_M\psi_r(z,t)|^2dzdt\lesssim ~r^{2l-n-2}.\ee
By the Plancherel theorem and the relation between $X_i$, $Z_i(\lambda)$ and $\bar{Z_i}(\lambda)$, it is enough to estimate
\be\label{estim:Z}\int_{\C^n}\|\frac{\partial^l}{\partial\lambda^l}\Big(Z_i(\lambda) T^\lambda_{M(\lambda)}\psi_r^\lambda\Big)\|^2_2d\lambda \ee
and
\be\int_{\C^n}\|\frac{\partial^l}{\partial\lambda^l}\Big(\bar{Z_i}(\lambda) T^\lambda_{M(\lambda)}\psi_r^\lambda\Big)\|^2_2d\lambda. \ee
Since both can be estimated similarly, we will only estimate (\ref{estim:Z}). Now (\ref{estim:Z}) can be dominated by
\be\label{exp:Z}\int_{\C^n}\|Z_i(\lambda)\frac{\partial^l}{\partial\lambda^l}\Big( T^\lambda_{M(\lambda)}\psi_r^\lambda\Big)\|^2_2d\lambda+\frac{1}{4}\int_{\C^n}\|z_i\frac{\partial^{l-1}}{\partial\lambda^{l-1}}\Big( T^\lambda_{M(\lambda)}\psi_r^\lambda\Big)\|^2_2d\lambda.\ee
We can estimate the second term similarly as (\ref{estim:main2}).

 In order to estimate the first term of (\ref{exp:Z}) notice that it is enough to estimate (see Lemma \ref{lem:main2}) the term
 $$\int_{\R}|\lambda|^{-(|a|+|b|)}\|Z_i(\lambda)
 T_{\delta^a(\lambda)\bar{\delta}^b(\lambda)\Theta^s(\lambda) M(\lambda)}\Big(z^c \bar{z}^{d}\frac{\partial^{l_1}}{\partial \lambda^{l_1}}\psi^\lambda_r\Big)\|^2_2d \lambda,$$
 where $|\alpha|+|\beta|+|\gamma|+|\delta|+2s=l-l_1$.

 Since, $W_\lambda(Z_i(\lambda)f)=i W_\lambda(f)A_j(\lambda)$, one can dominate this by
 \begin{multline}
 \label{estim:Z3}
 \sum_{N=0}^\infty\int_{\R}|\lambda|^{-(|a|+|b|+|c|+|d|)}\| \delta^a(\lambda)\bar{\delta}^b(\lambda)\Theta^s(\lambda)M(\lambda)\chi_N(\lambda)\|^2_{\operatorname{HS}}\\
 \times\|\chi_N(\lambda)\delta^c(\lambda) \bar{\delta}^d(\lambda)W_\lambda(\frac{\partial^{l_1}}{\partial \lambda^{l_1}}\psi^\lambda_r)A_i(\lambda)\|^2_{\operatorname{op}}|\lambda|^{n-\frac{l-l_1}{2}}d\lambda.
 \end{multline}
 Since
 $$
 A_i(\lambda)\Phi_\alpha^\lambda=(2\alpha_j)^{\frac{1}{2}}|\lambda|^{\frac{1}{2}}\Phi^\lambda_{\alpha-e_j},
 $$
 we can estimate $\|\chi_N(\lambda)\delta^c(\lambda) \bar{\delta}^d(\lambda)W_\lambda(\frac{\partial^{l_1}}{\partial \lambda^{l_1}}\psi^\lambda_r)A_i(\lambda)\|^2_{\operatorname{op}}$ as in Lemma \ref{eq:estimate} to obtain
 $$\|\chi_N(\lambda)\delta^c(\lambda) \bar{\delta}^d(\lambda)W_\lambda\Big(\frac{\partial^{l_1}}{\partial \lambda^{l_1}}\psi^\lambda_r\Big)A_i(\lambda)\|^2_{\operatorname{op}}\leq C~2^{-N(2l+|\alpha|+|\beta|-1)} g_{\alpha, \beta, l}(r2^N).$$
 As we get an extra $2^N$ on the right hand side, (\ref{estim:Z3}) can be bounded by $r^{l-(n+2)}$, as in Lemma \ref{lem:main2}. Thus (\ref{estim:main6}) is proved.

The third part of the lemma is similar to (2).
\end{proof}

\section{Proof of Theorem \ref{th:Main1}}
\label{sec:proofm}
We will start this section with some definitions. We will first describe the dyadic Heisenberg cubes from \cite{C} and \cite{H2}.
\begin{thm}\label{Th:dyadic}There exists a collection of open sets $\mathcal{D}=\{Q^j_\alpha\subset H^n:j\in \Z^n, \alpha\in I_j\}$, and absolute constants $0<\eta<1$, $a>0$, $b>0$ and $\epsilon>0$ such that
\begin{enumerate}
\item $|H^n\setminus \cup_\alpha Q^j_\alpha|=0$, for all $j$.\\
\item For $l\geq j$ and any $\alpha$, $\beta$, either $Q^l_\alpha$ is contained in $Q^j_\beta$ or they do not intersect.\\
\item For each $j$, $\alpha$ there exists a $\beta$ such that
$$Q^{j+1}_\beta\subset Q^{j}_\alpha.$$
$Q^{j+1}_\beta$ is called a child of $Q^j_\alpha$.\\
\item For each $j$, $\alpha$ there exist unique $\beta$ such that
$$Q^j_\alpha\subset Q^{j-1}_\beta.$$
$Q^{j-1}_\beta$ is called the parent of $Q^{j}_\alpha$.
\item If $Q^j_\alpha$ is a child of $Q^{j-1}_\beta$, then
$$|Q^j_\alpha|\geq \epsilon |Q^{j-1}_\beta|$$

\item There is a point $(z_\alpha^j, t_\alpha^j)$ such that $B((z_\alpha^j, t^j_\alpha), \eta^j)\subset Q^j_\alpha\subset B((z_\alpha^j, t^j_\alpha), a\eta^j) $.

\end{enumerate}
We will call those $Q^j_\alpha$ as cubes of side lenght $\eta^j$ and center $(z_\alpha^j, t_\alpha^j)$. For any $\gamma>0$ the dilation of any cube is defined as follows
$$\gamma Q^j_\alpha= B((z_\alpha^j, t_\alpha^j), a\gamma \eta^j).$$

\end{thm}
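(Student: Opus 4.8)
The plan is to prove this as a special case of Michael Christ's dyadic cube construction on a space of homogeneous type, which $H^n$ is: the homogeneous norm gives a quasi-metric $d((z,t),(w,s))=|(w,s)^{-1}(z,t)|$, and Lebesgue measure is doubling (indeed $|B((z,t),r)|\sim r^{Q}$ with homogeneous dimension $Q=2n+2$, by the dilation structure). Fix once and for all a ratio $\eta\in(0,1)$ and a separation constant $\kappa>2$, both to be pinned down at the end. The first step is to select, for every scale $j\in\Z$, a maximal $\kappa\eta^{j}$-separated set of centers $\{(z_\alpha^{j},t_\alpha^{j})\}_{\alpha\in I_j}$. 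Separation by more than $2\eta^{j}$ makes the balls $B((z_\alpha^{j},t_\alpha^{j}),\eta^{j})$ pairwise disjoint at a fixed scale, while maximality forces the dilated balls $B((z_\alpha^{j},t_\alpha^{j}),\kappa\eta^{j})$ to cover $H^n$. Doubling guarantees each index set $I_j$ is countable and that only boundedly many fine centers cluster near a given coarse one, which I will need for local finiteness and for the a.e. statement.

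Next I would impose a tree structure across scales. For each finer center $(z_\alpha^{j+1},t_\alpha^{j+1})$ the covering property at scale $j$ produces at least one coarser center within distance $\kappa\eta^{j}$; I assign to each child a single such parent, obtaining a parent map that, iterated, gives every center a well-defined chain of ancestors. I then define the open cube $Q_\beta^{j}$ to be the union of the small balls $B((z_\gamma^{l},t_\gamma^{l}),\eta^{l})$ over all descendants $(l,\gamma)$ of $(j,\beta)$ (a union of open balls, hence open, as the statement requires). Properties (2) and (3) are then immediate from the tree: a descendant of a child is a descendant of its parent, so a finer cube is contained in the cube of the relevant ancestor, and for each coarser scale the unique ancestor supplies the containing cube $Q_\beta^{l}$; moreover distinct same-scale cubes have disjoint descendant sets, so they are disjoint, which is the $l=j$ instance of (2).

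It remains to verify (1) and (4). The inner inclusion $B((z_\alpha^{j},t_\alpha^{j}),\eta^{j})\subset Q_\alpha^{j}$ is built in, since that ball is itself one of the generating balls, and it is consistent with same-scale disjointness precisely because the centers are more than $2\eta^{j}$ apart. For the outer inclusion I telescope: any descendant center at scale $l\geq j$ lies within $\sum_{i\geq j}\kappa\eta^{i}\lesssim \kappa\eta^{j}/(1-\eta)$ of $(z_\alpha^{j},t_\alpha^{j})$ along its ancestor chain, and its generating ball adds only a further $\eta^{l}\leq\eta^{j}$, so choosing $a$ larger than this geometric sum yields $Q_\alpha^{j}\subset B((z_\alpha^{j},t_\alpha^{j}),a\eta^{j})$. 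Property (1), that $\bigcup_\alpha Q_\alpha^{j}$ has full measure, follows because the generating balls cover $H^n$ while the separated, locally finite centers leave only a null residual set; here doubling is used to control the overlap and pass to the limit.

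The main obstacle is the simultaneous enforcement of the nesting (2) and the outer ball bound in (4). Nesting demands that the parent assignment be a globally consistent tree rather than a merely local nearest-point rule, and the outer bound demands that the displacement accumulated along an entire ancestor chain remain dominated by a single geometric series in $\eta$. Both succeed only if the per-scale separation $\kappa\eta^{j}$ dominates the drift introduced by passing to parents, which forces $\eta$ to be chosen small relative to the doubling constant of $H^n$; this quantitative balancing is exactly where the argument does its real work, and it is the step I would expect to require the most care.
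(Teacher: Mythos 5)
The paper offers no proof of this theorem at all --- it is quoted, with attribution, from Christ \cite{C} (the related construction of Hyt\"onen--Kairema \cite{H2} is also cited later), so your proposal must be judged on its own terms rather than against an internal argument. Judged that way, it has the right scaffolding (maximal $\kappa\eta^j$-separated nets, a parent map across scales, cubes built from descendants), but the crucial step fails: defining $Q^j_\alpha$ as the union of the inner balls $B((z^l_\gamma,t^l_\gamma),\eta^l)$ over all descendants $(l,\gamma)$ of $(j,\alpha)$ does not yield property (2), and your justification --- ``distinct same-scale cubes have disjoint descendant sets, so they are disjoint'' --- is a non sequitur: disjoint index sets do not give disjoint unions of balls, because a fine ball belonging to one tree can poke into a coarse ball of another tree. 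A one-dimensional toy example (the issue is purely metric and has nothing to do with $H^n$) already kills it: take $\eta=1/10$, $\kappa=3$, the scale-$0$ net $3\Z$ and the scale-$1$ net $(3/10)\Z$. The scale-$1$ point $0.9$ has both $0$ and $3$ as admissible parents (both lie within $\kappa\eta^0=3$), and your rule allows choosing the parent $3$; then $B(0.9,\,0.1)\subset Q^0_{3}$ meets $B(0,1)\subset Q^0_{0}$, while neither cube contains the other (all descendants of the root $3$ lie within $\kappa/(1-\eta)=10/3$ of $3$, so $Q^0_3$ cannot swallow $B(0,1)$, and symmetrically). No tuning of $\eta$ and $\kappa$ repairs this: the drift along an ancestor chain, $\sum_{i\ge j}\kappa\eta^i=\kappa\eta^j/(1-\eta)$, always exceeds the scale-$j$ separation $\kappa\eta^j$, so crossings near the interface between two trees cannot be excluded. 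This is precisely the hard point of Christ's construction, and it is why his cubes are not unions of metric balls: one must assign \emph{points}, not balls, to a unique tree --- for instance, take the closed cube to be the closure of the set of descendant net points, prove a quantitative small-boundary estimate showing that the set of points whose tree assignment fails to stabilize as the scales refine is null, and then take interiors to obtain the open cubes of the statement. That stabilization/null-boundary argument is the missing idea, and it is where doubling does its real work.

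Two smaller points. First, your derivation of (1) starts from ``the generating balls cover $H^n$,'' which is false: by maximality only the $\kappa\eta^l$-dilates cover. The standard repair is a Lebesgue-density argument: at every scale the union of inner balls has measure density bounded below in every ball of comparable radius, so the complement of $\bigcup_\alpha Q^j_\alpha$ has no density points and hence measure zero. Second, your arguments for (3), for the inner inclusion in (4), and for the outer inclusion via the telescoped geometric series are fine; the proposal is salvageable exactly by replacing the definition of the cubes, not the net-and-tree scaffolding.
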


A collection of cubes $\mathcal{S}$ in $H^n$ is said to be $ \eta$-sparse if there are sets $\{E_S \subset S:S\in \mathcal{S}\}$ which are pairwise disjoint and satisfy $|E_S|>\eta|S|$ for all $S\in \mathcal{S}$.
Corresponding to a dyadic grid $\mathcal{D}$ and a sparse family $\mathcal{S}$, for $1\leq r<\infty$, we can consider a sparse operator, which is defined as follows.
$$\mathcal{A}_{r,\mathcal{S}}f(z,t)=\sum_{Q\in \mathcal{S}}\Big(\frac{1}{|Q|}\int_{Q}|f|^r\Big)^\frac{1}{r}\chi_Q(z,t).$$

Let $t_j =2^{-j}$, $j\in \N$. Let us define the following operators
$$T_j f(z,t)= T\psi_{t_j}*f(z,t).$$
For $N\in \N$, consider
$$T^Nf=\sum_{j=1}^N T_jf.$$
 It is easy to see that $T^Nf$ tends to $Tf$ in $L^2$ as $N$ tends to $\infty$. Let $k_j$ and $K^N$ are the kernels of $T_j$ and $T^N$ respectively.

For any cube $Q\subset H^n$, let us consider the operators
$$ \mathcal{T}_{j,Q} f(z,t)=\Big(\int_{H^n\setminus 3Q}k_j((z,t)(w,s)^{-1})f(w,s)dt\Big)\chi_{Q}(z,t)$$
and
$$\mathcal{T}^N_Q f(z,t)=\sum_{j=1}^N \mathcal{T}_{j,Q} f(z,t).$$

Also, consider the operator
$$\mathcal{T}^{N*}f(z,t)=\sup_{Q\ni(z,t)}|\mathcal{T}^N_Q f(z,t)|.$$
We will prove the following theorem.

\begin{thm}\label{thm:Strong}Suppose $M$ satisfy the hypothesis of the Theorem \ref{th:Main1}. Then, for $f\in C_c^\infty(H^n)$ and $(z_0,t_0)\in H^n$
\be\label{estim:T*} \mathcal{T}^{N*}f(z_0,t_0)\leq C~\left(\Lambda(\mathcal{T}^N f)(z_0,t_0)+\Lambda_2f(z_0,t_0)\right)\ee
where the constant does not depend on $N$. Here $\Lambda$ stands for the the maximal function associated to the Heisenberg group. Also, we will have
\be\label{estim:T*2}\|\mathcal{T}^{N*}\|_{L^{2,\infty}(H^n)}\leq C~ \|f\|_{L^2(H^n)}.\ee

\end{thm}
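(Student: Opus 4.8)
The plan is to establish (\ref{estim:T*}) by a Cotlar-type ``grand maximal truncation'' estimate, in the spirit of the Lerner--Lacey proof of the $A_2$ theorem (\cite{AL1}, \cite{L1}), and then to read off the weak-type bound (\ref{estim:T*2}) from it via the mapping properties of maximal functions. First I would note that, by the very definition of $\mathcal{T}^N_Q$, for a cube $Q\ni(z_0,t_0)$ one has $\mathcal{T}^N_Q f(z_0,t_0)=T^N(f\chi_{(3Q)^c})(z_0,t_0)$, so proving (\ref{estim:T*}) reduces to bounding $|T^N(f\chi_{(3Q)^c})(z_0,t_0)|$ uniformly over all cubes containing $(z_0,t_0)$. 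The central device is to compare the value at the centre with the values at a generic $(z,t)\in Q$: writing
$$T^N(f\chi_{(3Q)^c})(z_0,t_0)=\left[T^N(f\chi_{(3Q)^c})(z_0,t_0)-T^N(f\chi_{(3Q)^c})(z,t)\right]+T^N(f\chi_{(3Q)^c})(z,t)$$
and then splitting the last summand as $T^N(f\chi_{(3Q)^c})(z,t)=T^Nf(z,t)-T^N(f\chi_{3Q})(z,t)$, I would average this identity over $(z,t)\in Q$.

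Each of the three resulting pieces is designed to match one term on the right of (\ref{estim:T*}). Since $Q$ is comparable to a ball centred near $(z_0,t_0)$, averaging $|T^Nf(z,t)|$ over $Q$ is controlled by $\Lambda(\mathcal{T}^Nf)(z_0,t_0)$. For the local piece $T^N(f\chi_{3Q})$ I would use the uniform $L^2$-boundedness of $T^N$ (inherited from that of $T_M$) together with the Cauchy--Schwarz inequality, which bounds its average over $Q$ by $\big(\frac{1}{|3Q|}\int_{3Q}|f|^2\big)^{1/2}\le\Lambda_2 f(z_0,t_0)$. The difference term, in which $f$ is supported off $3Q$, is where the kernel regularity enters: decomposing $(3Q)^c$ into dyadic annuli about $(z_0,t_0)$ and applying the $L^2$-Hörmander smoothness of $K^N$ (cf. property $(5)$ of the approximate identity) gives a geometrically summable series whose annular $L^2$-averages assemble, after summation over scales, into the composite maximal bound $\Lambda(\Lambda_2 f)(z_0,t_0)$. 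Collecting the three estimates and taking the supremum over $Q\ni(z_0,t_0)$ yields (\ref{estim:T*}).

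The hard part will be this last piece, and more precisely the requirement that the $L^2$-Hörmander regularity of $K^N=\sum_{j=1}^N k_j$ hold \emph{uniformly in} $N$. This is exactly the point at which the weighted $L^2$ estimates proved earlier are needed: Lemma \ref{lem:main2} controls $\int_{H^n}|T_M\psi_r|^2\rho^l$, and Lemma \ref{lem:2} controls the analogous integrals for $\partial_t T_M\psi_r$, $X_iT_M\psi_r$ and $Y_iT_M\psi_r$, each with the sharp power of $r$. Converting these scale-by-scale bounds (recall $t_j=2^{-j}$ and $\psi_j=\psi_{t_j}$) into a genuine Hörmander condition, and arranging the geometric contributions of the dyadic shells so that their sum is independent of $N$, is the delicate computation; the decay factor $(\rho(z_0,t_0)/r)^\eta$ supplied by the kernel smoothness is precisely what makes the annular series converge.

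Finally, to pass from (\ref{estim:T*}) to (\ref{estim:T*2}) I would observe that each of the three operators on the right of (\ref{estim:T*}) maps $L^2$ boundedly into $L^{2,\infty}$. Indeed $\Lambda$ is bounded on $L^2$ and $\mathcal{T}^N$ (equivalently $T^N$) is uniformly bounded on $L^2$, so $\|\Lambda(\mathcal{T}^Nf)\|_{L^{2,\infty}}\lesssim\|f\|_{L^2}$; the sublinear operator $\Lambda_2 f=\Lambda(|f|^2)^{1/2}$ maps $L^2\to L^{2,\infty}$ because $\Lambda$ is weak type $(1,1)$; and since the Hardy--Littlewood maximal operator is bounded on $L^{p,\infty}$ for $p>1$, one also has $\|\Lambda(\Lambda_2 f)\|_{L^{2,\infty}}\lesssim\|\Lambda_2 f\|_{L^{2,\infty}}\lesssim\|f\|_{L^2}$. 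Summing these three weak-type quasi-norms gives (\ref{estim:T*2}) with a constant independent of $N$, which is what allows the later passage to the limit $N\to\infty$.
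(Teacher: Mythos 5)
Your proposal follows essentially the same route as the paper's proof: the same Lerner-type scheme of comparing $\mathcal{T}^N_Qf(z_0,t_0)=T^N(f\chi_{(3Q)^c})(z_0,t_0)$ with values at generic points of $Q$, the same three-way split (oscillation term, $T^Nf$, local term $T^N(f\chi_{3Q})$), the same Cauchy--Schwarz plus $L^2$-boundedness bound for the local term, the same averaging over $Q$, and the same deduction of (\ref{estim:T*2}) from (\ref{estim:T*}); your observation that one needs boundedness of $\Lambda$ on $L^{2,\infty}$ to handle $\Lambda(\Lambda_2 f)$ is in fact more careful than the paper's one-line dismissal of this point.

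However, the step you defer as ``the delicate computation'' is the actual content of the theorem, and the mechanism you propose for it would not work as stated. You attribute the convergence of the annular/scale series to ``the decay factor $(\rho(z_0,t_0)/r)^\eta$ supplied by the kernel smoothness (cf.\ property (5) of the approximate identity).'' Property (5) is a statement about $\phi_r$ itself; it is \emph{not} inherited by the kernels $k_j=T_M\psi_{t_j}$, since $T_M$ is only $L^2$-bounded --- producing any regularity at all for $T_M\psi_r$ is precisely the difficulty, and it is what Lemmas \ref{lem:main2} and \ref{lem:2} exist for. Moreover, smoothness estimates alone cannot yield a bound uniform in $N$: the gradient bounds of Lemma \ref{lem:2}, inserted via the fundamental theorem of calculus along a left-invariant curve, give a per-scale contribution of order $l(Q)^2/t_{j+1}^{1/2}$, which diverges when summed over the small scales $t_{j+1}^{1/2}\ll l(Q)$, and these are most of the scales as $N\to\infty$. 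What the paper actually does is: (i) split $K^N$ by a cutoff $\phi$ into a near part $K^N_1$ supported in $\{\rho<1\}$ and a far part $K^N_2$; (ii) for the near part, prove the scale-by-scale estimate (\ref{estim:min}) carrying the factor $\min\{t_{j+1}^{1/2}/l(Q),\,l(Q)/t_{j+1}^{1/2}\}$ --- the size bound of Lemma \ref{lem:main2} when $t_{j+1}^{1/2}\le l(Q)$, the gradient bounds of Lemma \ref{lem:2} when $t_{j+1}^{1/2}>l(Q)$, the compact support of $K^N_1$ being what permits the juggling of powers of $\rho$ needed to stay within the restricted range $l\le[\frac{n+1}{2}]$ of Lemma \ref{lem:2}; and (iii) for the far part, use a pure size bound with the scale-adapted weight $\left(1+s_{j+1}^{-2}\rho\right)^{\frac{n+3/2}{2}}$, which produces the summable factor $s_{j+1}^{3/4}$ with no smoothness at all. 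Some version of this size/smoothness alternation, and of the near/far splitting that makes the weight exponents compatible with the available lemmas, must be supplied before your outline becomes a proof.
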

\begin{proof} As $\mathcal{T}^N$ are uniformly bounded in $L^2(\R^n)$ and $\Lambda$ and $\Lambda_2$ both satisfy weak type (2,2) estimate, (\ref{estim:T*2}) is an easy consequence of (\ref{estim:T*}).
So, we will prove (\ref{estim:T*}). Fix  a cube $Q$ which contains $(z_0, t_0)$. Define $f_1=f\chi_{3Q}$ and $f_2=f-f_1$. Let us also consider $\phi\in C_0^\infty (H^n)$ supported in the homogeneous ball $\{(z,t):\rho(z,t)<1\}$ and satisfying $\phi((z,t))=1$ whenever $\rho(z,t)< \frac{1}{2}$. Define
\begin{align*}
K^N_1((z,t),(w,s))&:=K^N((z,t),(w,s))\phi((z,t)(w,s)^{-1})\\
K^N_2((z,t),(w,s))&:=K^N((z,t),(w,s))(1-\phi((z,t)(w,s)^{-1})),\\
k_{j,1}((z,t),(w,s))&:=k_j((z,t),(w,s))\phi((z,t)(w,s)^{-1})
\end{align*}
and
$$
k_{j,2}((z,t),(w,s)):=k_j((z,t),(w,s))(1-\phi((z,t)(w,s)^{-1})).
$$
Also, let $T^N_1$ and $T^N_2$ be the integral operators corrseponding to the kernels $K^N_1$ and $K^N_2$, respectively.

Since $f_2$ is supported outside $3Q$, we have
\begin{align*}
T^N(f_2)((z_0,t_0))&=\int_{H^n} K^N ((z_0,t_0)(w,s)^{-1}) f_2(w,s)dwds\\
&=\int_{H^n\setminus 3Q} K^N ((z_0,t_0)(w,s)^{-1}) f(w,s)dwds=\mathcal{T}^N_{Q}f(z_0,t_0).
\end{align*}

Now, let $(z,t)\in Q$. We will first prove that \be\label{estim:f1} | T^N_1 f_2(z,t)- T^N_1f_2(z_0, t_0)|\leq C \Lambda_2 f(z_0, t_0),\ee
where $C$ is independent of $N$. We will make use of the estimates obtained in Section \ref{sec:main} in order to get the above estimate.
\begin{multline*}
|T^N_1 f_2(z,t)- T^N_1f_2(z_0, t_0)|\\\lesssim  \int_{H^n\setminus 3Q} |K_1^N\big((z,t)(w,s)^{-1}\big)-K_1^N\big((z_0,t_0)(w,s)^{-1}\big)||f(w,s)|dwds.
\end{multline*}
Using Cauchy-Schwarz inequality the above term can be dominated by
\begin{multline*}
\Big(\int_{H^n\setminus 3Q} \rho\big((z,t)(w,s)^{-1}\big)^{\frac{n+\frac{3}{2}}{2}}|K_1^N\big((z,t)(w,s)^{-1}\big)-K_1^N\big((z_0,t_0)(w,s)^{-1}\big)|^2dwds\Big)^\frac{1}{2}\\
\times\bigg(\int_{H^n\setminus 3Q}\frac{|f(w,s)|^2}{\rho\big((z,t)(w,s)^{-1}\big)^{\frac{n+\frac{3}{2}}{2}}}dwds\bigg)^\frac{1}{2}.
\end{multline*}
We claim that
\begin{multline}
\label{estim:main31} \int_{H^n\setminus 3Q} \rho\big((z,t)(w,s)^{-1}\big)^{\frac{n+\frac{3}{2}}{2}}\big|K_1^N\big((z,t)(w,s)^{-1}\big)-K_1^N\big((z_0,t_0)(w,s)^{-1}\big)\big|^2dwds\\
\lesssim ~l(Q).
\end{multline}
If the claim is true, then we have
\begin{align*}
| \mathcal{T}_N^1 f_2(z,t)- \mathcal{T}_N^1f_2(z_0, t_0)|&\lesssim l(Q)^{\frac{1}{2}}\Big(\sum_{k=1}^\infty \int_{3^{k+1}Q\setminus 3^k Q}\frac{|f(w,s)|^2}{\rho\big((z,t)(w,s)^{-1}\big)^{\frac{n+\frac{3}{2}}{2}}}dwds\Big)^\frac{1}{2} \\
&\lesssim l(Q)^{\frac{1}{2}}\Big( \sum_{k=1}^\infty\frac{1}{(a 3^k l(Q))^{2(n+\frac{3}{2})}}\int_{3^{k+1}Q} |f(w,s)|^2dwds \Big)^\frac{1}{2}\\
&\lesssim \Big(\sum_{k=1}^\infty 3^{-k}\Big)^\frac{1}{2} \Lambda_2 f(z_0, t_0)\\
&\lesssim \Lambda_2 f(z_0,t_0).
\end{align*}
Hence  (\ref{estim:f1}) is proved.

 In order to prove (\ref{estim:main31}), it is enough to prove
 \begin{multline}
 \label{estim:min}
 \int_{H^n\setminus 3Q} \rho\big((z,t)(w,s)^{-1}\big)^{\frac{n+\frac{3}{2}}{2}}\big|k_{j,1}\big((z,t)(w,s)^{-1}\big)-k_{j,1}\big((z_0,t_0)(w,s)^{-1}\big)\big|^2dwds\\
 \lesssim ~l(Q)\min\Big\{\frac{t_{j+1}^\frac{1}{2}}{l(Q)}, \frac{l(Q)}{t_{j+1}^{\frac{1}{2}}}\Big\}.
 \end{multline}
 As $(z,t)\in Q$ and $(w,s)\in H^n\setminus  3Q$, $\rho\big((z_0, t_0)(w,s)^{-1}\big)$ and $\rho\big((z,t)(w,s)^{-1}\big)$ are comparable. So using Lemma \ref{lem:main2} we have
\begin{align*}
&\int_{H^n\setminus 3Q} \rho\left((z,t)(w,s)^{-1}\right)^{\frac{n+\frac{3}{2}}{2}}|k_{j,1}\left((z,t)(w,s)^{-1}\right)-k_{j,1}\left((z_0,t_0)(w,s)^{-1}\right)|^2dwds\\
&\lesssim \int_{H^n\setminus 3Q} \rho\left((z,t)(w,s)^{-1}\right)^{\frac{n+\frac{3}{2}}{2}}|k_{j,1}\left((z,t)(w,s)^{-1}\right)|^2dwds \\
&\lesssim t_{j+1}^{\frac{1}{2}}.
\end{align*}
Therefore, we only have to show that
\begin{multline*}
\int_{H^n\setminus 3Q} \rho\big((z_0,t_0)(w,s)^{-1}\big)^{\frac{n+\frac{3}{2}}{2}}\big|k_{j,1}\big((z,t)(w,s)^{-1}\big)-k_{j,1}\big((z_0,t_0)(w,s)^{-1}\big)\big|^2dwds\\
\lesssim ~\frac{l(Q)^2}{t_{j+1}^\frac{1}{2}}.
\end{multline*}
By change of variable, it is enough to prove
$$\int_{H^n\setminus 2Q_0} \rho(w,s)^{\frac{n+\frac{3}{2}}{2}}|k_{j,1}\big((z,t)(z_0, t_0)^{-1} (w,s)\big)-k_{j,1}(w,s)|^2dwds\lesssim ~\frac{l(Q)^2}{t_{j+1}^\frac{1}{2}},$$
where $2Q_0$ is the ball whose center is the origin and the radius is the same as that of the cube $2Q$. Let $(z,t)(z_0, t_0)^{-1}=(u,\tilde{t})$. We only consider the $\tilde{t}=0$ case. For general $\tilde{t} $ one can follow the proof of \cite[Lemma 1, part iv]{L}. Let $L$ be the left invariant vector field corresponding to the curve $\gamma(\alpha)=\alpha \frac{(u,0)}{|u|}$, $\alpha\in \R$. Then from the fundamental theorem of calculus, we have
\begin{align*}
&\Big(\int_{H^n\setminus 2Q_0} \rho(w,s)^\frac{n+\frac{3}{2}}{2}|k_{j,1}\big((u,0)(w,s)\big)-k_{j,1}(w,s)|^2dwds\Big)^\frac{1}{2}\\
&\lesssim \Big(\int_{H^n\setminus 2Q_0} \rho(w,s)^\frac{n+\frac{3}{2}}{2}\Big|\int_0^{|u|}Lk_{j,1}\big(\gamma(\alpha)(w,s)\Big)d\alpha\Big|^2dwds\big)^\frac{1}{2}\\
&\lesssim\int_0^{|u|}\Big(\int_{H^n\setminus 2Q_0} \rho(w,s)^\frac{n+\frac{3}{2}}{2}|Lk_{j,1}\big(\gamma(\alpha)(w,s)\big)|^2dwds\Big)^\frac{1}{2}d\alpha.
\end{align*}
As $\rho(w,s)$ and $\rho\left((u,s)(w,s)\right)$ are comparable, $\rho(w,s)$ and $\rho\left(\gamma(\alpha)(w,s)\right)$ are also comparable. Hence the above can be dominated by
$$
\int_0^{|u|}\Big(\int_{H^n\setminus 2Q_0} \rho\big(\gamma(\alpha)(w,s)\big)^\frac{n+\frac{3}{2}}{2}|Lk_{j,1}\big(\gamma(\alpha)(w,s)\big)|^2dwds\Big)^\frac{1}{2}d\alpha.
$$
From the definition of $k_{j,1}$, we have $Lk_{j,1}(w,s)=Lk_j(w,s) \phi(w,s)+ k_j(w,s)L\phi(w,s)$. Now as $|L\phi(w,s)|$ is bounded, the corresponding integral associated to that term can be dominated by
$$
\int_0^{|u|}\Big(\int_{H^n\setminus 2Q_0} \rho\big(\gamma(\alpha)(w,s)\big)^\frac{n+\frac{3}{2}}{2}|k_{j}\big(\gamma(\alpha)(w,s)\big)|^2dwds\Big)^\frac{1}{2}d\alpha.
$$
Using Lemma \ref{lem:main2} this can be further bounded by
$$ t_{j+1}^{-\frac{1}{4}} |u|\lesssim a\, l(Q) t_{j+1}^{-\frac{1}{4}}\lesssim l(Q)^\frac{1}{2} \frac{l(Q)^\frac{1}{2}}{t_{j+1}^\frac{1}{4}}.$$
Using the bounedness of $\phi(w,s)$, we can dominate the integral associated to the term $Lk_j(w,s) \phi(w,s)$ by
$$
C~\int_0^{|u|}\Big(\int_{H^n\setminus 2Q_0} \rho\big(\gamma(\alpha)(w,s)\big)^\frac{n+\frac{3}{2}}{2}|L k_{j,1}\big(\gamma(\alpha)(w,s)\big)|^2dwds\Big)^\frac{1}{2}d\alpha.
$$
As $k_{j,1}$ is supported in the homogenous ball of radius $1$ and $\rho(w,s)$ and $\rho\left(\gamma(\alpha)(w,s)\right)$ are  comparable, the above is less than or equal to
$$
\int_0^{|u|}\Big(\int_{H^n\setminus 2Q_0} \rho\big(\gamma(\alpha)(w,s)\big)^\frac{n+\frac{5}{2}}{2}|Lk_{j}\big(\gamma(\alpha)(w,s)\big)|^2dwds\Big)^\frac{1}{2}d\alpha.
$$
By Lemma \ref{lem:2} the above integral is again bounded by
$$C~\frac{|u|}{t_{j+1}^\frac{1}{4}}\leq C~l(Q)^{\frac{1}{2}}\frac{l(Q)^\frac{1}{2}}{t_{j+1}^\frac{1}{4}}. $$
Hence (\ref{estim:min}) is proved and this completes the proof of (\ref{estim:f1}).

We will now prove that \be\label{estim:f2}|T^{N}_2 f_2(z,t)|\leq C~ \Lambda_2 f(z',t'),\ee
for any $(z,t), (z',t') \in Q$.

Let $s_{j}=t_j^\frac{1}{2}$. As for all $(w,s)\in H^n\setminus 3Q$, $\rho\left((z,t)(w,s)^{-1}\right))$ and $\rho\left((z',t')(w,s)^{-1}\right))$ are comparable, by Cauchy-Schwarz inequality we can dominate $|T^{N}_2 f(z,t)|$ by
\begin{multline*}
\sum_{j=1}^N\Big(\int_{H^n\setminus 3Q}\big(1+ s_{j+1}^{-2}\rho\big((z,t)(w,s)^{-1}\big)\big)^{\frac{n+\frac{3}{2}}{2}}|k_{j,2}\big((z,t)(w,s)^{-1}\big)|^2dwds\Big)^\frac{1}{2}\\
\times\bigg(\int_{H^n\setminus 3Q}\frac{|f(w,s)|^2}{\big(1+ s_{j+1}^{-2}\rho\big((z',t')(w,s)^{-1}\big)\big)^{\frac{n+\frac{3}{2}}{2}}}dwds\bigg)^\frac{1}{2}.
\end{multline*}
Since $k_{j,2}$ is supported outside homogeneous ball of radius 1 and centre at the origin and $s_{j+ 1}>1$, using Lemma \ref{lem:main2}, we have
\begin{align*} &\int_{H^n\setminus 3Q} (1+s_{j+1}^{-2}\rho\big((z,t)(w,s)^{-1}\big))^{\frac{n+\frac{3}{2}}{2}}|k_{j,2}\big((z,t)(w,s)^{-1}\big)|^2dwds\\
&\leq s_{j+1}^{-n-\frac{3}{2}}\int_{H^n} \rho\big((z,t)(w,s)^{-1}\big)^{\frac{n+\frac{3}{2}}{2}}|k_j\big((z,t)(w,s)^{-1}\big)|^2dwds\\
&\leq C~  s_{j+1}^{-(n+\frac{3}{2})}t_{j+1}^{\frac{1}{2}}\\
&\leq C~s_{j+1}^{-(n+\frac{1}{2})}.\end{align*}
it is well-known that $\bigg(\int_{H^n\setminus 3Q}\frac{|f(w,s)|^2}{\big(1+ s_{j+1}^{-2}\rho\big((z',t')(w,s)^{-1}\big)\big)^{\frac{n+\frac{3}{2}}{2}}}dwds\bigg)^\frac{1}{2}$ is dominated by
$s_{j+1}^{\frac{n+2}{2}}\Lambda_2f(z',t').$
Hence $|T^{N}_2 f_2(z,t)|$ is bounded by
$$C\sum_{j=1}^N s_{j+1}^{\frac{3}{4}}\Lambda_2f(z,t)\leq C~\Lambda_2f(z',t') $$
where the constant is independent of $N$. Therefore, we have proved (\ref{estim:f2}).

Now, let $(z,t)\in Q$. Then,
\begin{align*}
|\mathcal{T}^N_Q f(z_0,t_0)|&\leq |T^N f_2(z_0,t_0)|\\
&\leq |T^{N} f_2(z_0,t_0)-T^{N} f_2(z,t)|+|T^{N} f_2(z,t)|\\
&\leq |T^{N}_1 f_2(z_0,t_0)-T^{N}_1 f_2(z,t)|+|T^{N}_2 f_2(z_0,t_0)|\\&\quad +|T^{N}_2 f_2(z,t)|+|T^{N} f(z,t)|+|T^{N} f_1(z,t)|\\
&\leq C~ \big(\Lambda_2 f(z_0, t_0)+|T^N f(z,t)|+|T^{N} f_1(z,t)|\big).
\end{align*}
Taking the integral over the region $(z,t)\in Q$ on both sides and dividing by $\frac{1}{|Q|}$ we have
$$|\mathcal{T}^N_{Q}f(z_0,t_0)|\leq C~ \Big(\Lambda_2 f(z_0, t_0)+\Lambda(T^N f)(z_0,t_0)|+\frac{1}{|Q|}\int_{Q}|T^{N} f_1(z,t)|\Big).$$
We can estimate $\frac{1}{|Q|}\int_{Q}|T^{N} f_1(z,t)|$ by using the $L^2$ boundedness of $T^{N}$ as follows
\begin{align*}\frac{1}{|Q|}\int_{Q}|T^{N} f_1(z,t)|dzdt&\leq \Big(\frac{1}{|Q|}\int_{Q}|T^{N} f_1(z,t)|^2dzdt\Big)^\frac{1}{2}\\
&\leq \Big(\frac{1}{|Q|}\int_{H^n}|T^{N} f_1(z,t)|^2dzdt\Big)^\frac{1}{2}\\
&\leq \Big(\frac{1}{|Q|}\int_{3Q}|f(z,t)|^2dzdt\Big)^\frac{1}{2}\\
&\leq \Lambda_2 f (z_0, t_0).
\end{align*}
So, finally we have
$$|\mathcal{T}^N_{Q}f(z_0,t_0)|\leq C~ \big(\Lambda_2 f(z_0, t_0)+\Lambda(T^N f)(z_0,t_0)\big).$$
Taking supremum over all cubes containing $(z_0, t_0)$ we get
$$\mathcal{T}^{N*}f(z_0,t_0)\leq C~ \big(\Lambda_2 f(z_0, t_0)+\Lambda(T^N f)(z_0,t_0)\big).$$
Hence (\ref{estim:T*}) is proved. Inequality (\ref{estim:T*2}) will follow from (\ref{estim:T*}) using the weak (2,2) bounedness of $\Lambda_2$ and $T^N$.
\end{proof}

Let us consider the following maximal function
$$\mathcal{M}_{T^N}f(z,t)= \sup_{(z,t)\in Q} \esssup_{(\xi,s)\in Q} |T^N(f\chi_{H^n\setminus 3Q})(\xi,s)|. $$
Here supremum is taken over all the cubes which contains $(z,t)$. Also for any cube $Q_0$ define
$$\mathcal{M}_{T_{Q_0}^N}f(z,t)= \sup_{(z,t)\in Q, Q\subset Q_0} \esssup_{(\xi,s)\in Q} |T^N(f\chi_{H^n\setminus 3Q})(\xi,s)|$$
The next lemma will be used to prove \ref{th:Main1}.
\begin{lem}\label{lem:pointwise}Let $M$ satisfies the hypothesis of the Theorem \ref{th:Main1}. Then
\begin{enumerate}\item For a.e $(z,t)\in Q$,
$$|T^N(f\chi_{3Q_0})(z,t)|\leq C~ \big(f(z,t)+\mathcal{M}_{T_{Q_0}^N}f(z,t)\big).$$
\item $\mathcal{M}_{T^N}f(z,t)\leq  C~\big(\Lambda_2f(z,t)+\mathcal{T}^{N*}f(z,t)\big)$
for any $(z,t)\in H^n$.
\end{enumerate}
Here $C$ depends only on $n$ and $T$, and not on $N$.

\end{lem}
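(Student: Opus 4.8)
The plan is to read these two inequalities as the Heisenberg-group incarnations of the two ingredients in A. Lerner's proof of sparse domination: part (2) says that the grand maximal truncated operator $\mathcal{M}_{T^N}$ is controlled pointwise by the $L^2$-maximal function $\Lambda_2$ together with the maximal truncation $\mathcal{T}^{N*}$, and part (1) is the local recursive estimate that will later drive the stopping-time construction of the sparse family. I would carry out both arguments for the truncated operators $T^N$, which possess honest convolution kernels $K^N=\sum_{j=1}^N k_j$ and are bounded on $L^2(H^n)$ \emph{uniformly} in $N$, and only afterwards pass to the limit via $T^N f\to T_M f$ in $L^2$ (hence a.e. along a subsequence). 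The whole scheme only works because every constant produced is independent of $N$.

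For part (2) I would fix a cube $Q$ containing $(z,t)$, take $(\xi,s)\in Q$, and split
\[
T^N(f\chi_{H^n\setminus 3Q})(\xi,s)=T^N(f\chi_{H^n\setminus 3Q})(z,t)+\big(T^N(f\chi_{H^n\setminus 3Q})(\xi,s)-T^N(f\chi_{H^n\setminus 3Q})(z,t)\big).
\]
Since $(z,t)\in Q$, the first term is precisely what defines $\mathcal{T}^N_Q$, so it is bounded by $\mathcal{T}^{N*}f(z,t)$. For the difference I would reuse the kernel-regularity estimates already established inside the proof of Theorem \ref{thm:Strong}: writing $K^N=K^N_1+K^N_2$ and regarding $f\chi_{H^n\setminus 3Q}$ as the far part $f_2$, the smoothness bound (\ref{estim:f1}) (whose heart is the H\"ormander-type inequality (\ref{estim:main31})) controls $|T^N_1 f_2(\xi,s)-T^N_1 f_2(z,t)|$ by $C\,\Lambda_2 f(z,t)$, while (\ref{estim:f2}) disposes of the tail $T^N_2 f_2$ by $C\,\Lambda_2 f(z,t)$. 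Taking the essential supremum over $(\xi,s)\in Q$ and then the supremum over all cubes $Q\ni(z,t)$ gives $\mathcal{M}_{T^N}f(z,t)\le C(\Lambda_2 f(z,t)+\mathcal{T}^{N*}f(z,t))$.

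For part (1) I would argue by Lebesgue differentiation relative to $Q_0$. For a.e. $(z,t)\in Q_0$ I choose cubes $Q_k$ with $(z,t)\in Q_k\subset Q_0$ and $Q_k\downarrow\{(z,t)\}$ and decompose
\[
T^N(f\chi_{3Q_0})(z,t)=T^N(f\chi_{3Q_k})(z,t)+T^N\big((f\chi_{3Q_0})\chi_{H^n\setminus 3Q_k}\big)(z,t).
\]
The second term is, by the very definition of $\mathcal{M}_{T_{Q_0}^N}$, bounded by $\mathcal{M}_{T_{Q_0}^N}f(z,t)$ (the functions $(f\chi_{3Q_0})\chi_{H^n\setminus 3Q_k}$ and $f\chi_{H^n\setminus 3Q_k}$ agreeing on the region $3Q_0\setminus 3Q_k$ that the kernel sees). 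For the first term I would invoke the uniform $L^2$-boundedness of $T^N$: by Cauchy--Schwarz,
\[
\frac{1}{|Q_k|}\int_{Q_k}|T^N(f\chi_{3Q_k})|\le\Big(\frac{1}{|Q_k|}\int_{H^n}|T^N(f\chi_{3Q_k})|^2\Big)^{1/2}\le C\Big(\frac{1}{|3Q_k|}\int_{3Q_k}|f|^2\Big)^{1/2},
\]
and the right-hand side tends to $C|f(z,t)|$ at every Lebesgue point of $|f|^2$. Letting $Q_k\downarrow\{(z,t)\}$ and then $N\to\infty$ along the a.e.-convergent subsequence yields $|T(f\chi_{3Q_0})(z,t)|\le C(|f(z,t)|+\mathcal{M}_{T_{Q_0}^N}f(z,t))$.

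The delicate point, and the one I expect to be the main obstacle, is the near/local term of part (1): one has to upgrade an estimate on the \emph{average} of $|T^N(f\chi_{3Q_k})|$ over $Q_k$ into a genuinely \emph{pointwise} bound at $(z,t)$, while at the same time keeping every constant independent of $N$ and controlling the passage to the limit $T^N\to T_M$. This is exactly where the uniform $L^2$-boundedness of the truncations and the Lebesgue differentiation theorem for the averages $\big(\frac{1}{|3Q_k|}\int_{3Q_k}|f|^2\big)^{1/2}$ are used. Part (2), by contrast, is essentially a bookkeeping of the kernel estimates already contained in Theorem \ref{thm:Strong}, and should present no new difficulty.
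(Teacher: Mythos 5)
Your part (2) is correct and coincides with the paper's own proof: for $(\xi,s)\in Q\ni(z,t)$ one writes $|T^{N}(f\chi_{H^n\setminus 3Q})(\xi,s)|$ as the value at $(z,t)$ (which equals $\mathcal{T}^N_Q f(z,t)\le\mathcal{T}^{N*}f(z,t)$) plus a difference that the kernel estimates (\ref{estim:f1}) and (\ref{estim:f2}) from the proof of Theorem \ref{thm:Strong} bound by $C\,\Lambda_2 f(z,t)$.

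Part (1), however, has a genuine gap, and it is exactly the point you flag as ``delicate'' but never resolve. Your decomposition forces you to bound the near term $|T^N(f\chi_{3Q_k})(z,t)|$ \emph{at the point} $(z,t)$, whereas Cauchy--Schwarz and the uniform $L^2$ bound only control the \emph{average} $\frac{1}{|Q_k|}\int_{Q_k}|T^N(f\chi_{3Q_k})|$. Lebesgue differentiation cannot convert one into the other here, because the function being averaged changes with $k$; and no pointwise bound $|T^N(f\chi_{3Q_k})(z,t)|\le C\bigl(\frac{1}{|3Q_k|}\int_{3Q_k}|f|^2\bigr)^{1/2}$ can hold in general, since $(z,t)$ lies inside the support of $f\chi_{3Q_k}$, where the kernel is singular. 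The paper avoids writing this argument by citing part (i) of Lemma 3.2 of \cite{AL1}, and Lerner's proof contains precisely the ingredient you are missing: a good-point selection. One works at a point $(z,t)$ of approximate continuity of $T^N(f\chi_{3Q_0})$ (a.e.\ point qualifies); for $\epsilon>0$ the set $E_k$ of $y\in Q_k$ with $|T^N(f\chi_{3Q_0})(y)-T^N(f\chi_{3Q_0})(z,t)|<\epsilon$ has relative measure tending to $1$. By Chebyshev and the uniform $L^2$ (hence weak $(2,2)$) bound, the set of $y\in Q_k$ where $|T^N(f\chi_{3Q_k})(y)|>A\bigl(\frac{1}{|3Q_k|}\int_{3Q_k}|f|^2\bigr)^{1/2}$ has measure at most $\frac14|Q_k|$ once $A$ is large (depending only on the doubling constant); and for a.e.\ $y\in Q_k$ one has $|T^N\bigl((f\chi_{3Q_0})\chi_{H^n\setminus 3Q_k}\bigr)(y)|\le 2\,\mathcal{M}_{T^N_{Q_0}}f(z,t)$, writing $(f\chi_{3Q_0})\chi_{H^n\setminus 3Q_k}=f\chi_{H^n\setminus 3Q_k}-f\chi_{H^n\setminus 3Q_0}$ and using the cubes $Q_k$ and $Q_0$ in the definition of $\mathcal{M}_{T^N_{Q_0}}$ (note your claim that the far term is bounded ``by the very definition'' also needs this triangle-inequality step, since the truncated function is not of the form $f\chi_{H^n\setminus 3Q_k}$). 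For large $k$ these three conditions hold simultaneously at some $y\in E_k$, and evaluating your decomposition at $y$ instead of at $(z,t)$ gives
\begin{equation*}
|T^N(f\chi_{3Q_0})(z,t)|\le \epsilon+A\Bigl(\frac{1}{|3Q_k|}\int_{3Q_k}|f|^2\Bigr)^{1/2}+2\,\mathcal{M}_{T^N_{Q_0}}f(z,t),
\end{equation*}
after which $k\to\infty$ (at Lebesgue points of $|f|^2$), $\epsilon\to 0$, and finally your limit $N\to\infty$ along the a.e.\ convergent subsequence finish the proof. Without this selection of a generic nearby point, the average-to-pointwise upgrade in your write-up is a non sequitur.
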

\begin{proof} The proof of the first part is same as the proof of part (i) of Lemma 3.2 of \cite[Lemma 3.2, part(i)]{AL1}. So, we will prove part (2). Let us consider a dyadic cube $Q$ containing $(z,t)\in H^n$. Let $(\xi,s)\in Q$. Then
$$|T^{N}f\chi_{H^n\setminus3Q}(\xi, s)|\leq |T^{N}f\chi_{H^n\setminus 3Q}(\xi, s)-T^{N}f\chi_{H^n\setminus 3Q}(z, t)|
+|T^{N}f\chi_{H^n\setminus 3Q}(z,t)|$$
Now,
$|T^{N}f\chi_{H^n\setminus 3Q}(\xi, s)-T^{N}f\chi_{H^n\setminus 3Q}(z, t)|$ can be dominated by $$|T_1^{N}f\chi_{H^n\setminus 3Q}(\xi, s)-T_1^{N}f\chi_{H^n\setminus 3Q}(z, t)|+|T^N_2f\chi_{H^n\setminus 3Q}(\xi, s)|+|T_2^{N}f\chi_{H^n\setminus 3Q}(z, t)|.$$
We have seen in the proof of Theorem \ref{thm:Strong} that all the above terms can be dominated by $\Lambda_2 f(z,t).$
On the other hand $|T^{N}f\chi_{H^n\setminus 3Q}(z,t)|$ is bounded by $\mathcal{T}^{N*}f(z,t)$.
Hence, we have
$$\mathcal{M}_{T^N}f(z,t)\leq C~ \big(\Lambda_2f(z,t)+ \mathcal{T}^{N*}f(z,t)\big).$$
\end{proof}
\begin{proof}[Proof of Theorem \ref{th:Main1}] Since we prove that $ \mathcal{T}^{N*}$ is of weak type $(2,2)$, by our previous lemma $\mathcal{M}_{T^N}$ is of weak type $(2,2)$.
 Let $Q$ be a cube in $H^n$. We shall find a sparse family $\mathcal{F}\subset \mathcal{D}(Q_0)$ such that
\be\label{estim:pointwise}|T(f\chi_{3Q_0})(z,t)|\leq C~ \sum_{Q\subset \mathcal{F}} \Big(\frac{1}{|3Q|}\int_{3Q}|f|^2\Big)^\frac{1}{2}\chi_Q(z,t)\ee
for a.e $(z,t)\in Q_0$. The above can be proved using exactly the similar idea used in proving \cite[Theorem 3.1]{AL1}.
To prove (\ref{estim:pointwise}), we will use a recursive method. We will first find a pairwise disjoint cubes $ Q_j^1\in \mathcal{D}(Q_0)$ such that $\sum |Q^1_j|\leq \frac{1}{2}|Q_0|$ and
\be\label{estim:fs}|T(f\chi_{3Q_0})(z,t)|\chi_{Q_0}(z,t)\leq C \Big(\frac{1}{|3Q_0|}\int_{3Q_0}|f|^2\Big)^\frac{1}{2}\chi_{Q_0}+\sum_j|T(f\chi_{3Q^1_j})|\chi_{Q^1_j}\ee
a.e $(z,t)\in H^n$.
Once we prove this, we will apply the same process on each $Q^1_j$ and continue the recursive process which finally leads to (\ref{estim:pointwise}).

As $\mathcal{M}_{T^N}$ is of weak type $(2,2)$, we can choose $\alpha\in R$ (depending only on $n$) such that the measure of the set
\begin{multline*}
E=\Big\{(z,t)\in Q_0 : |f(z,t)|> \alpha \Big(\frac{1}{|3Q_0|}\int_{3Q_0} |f|^2\Big)^\frac{1}{2}\Big\}\\
\cup\Big\{(z,t)\in Q_0 : \mathcal{M}_{T^N_{Q_0}}f(z,t)> \alpha \Big(\frac{1}{|3Q_0|}\int_{3Q_0} |f|^2\Big)^\frac{1}{2}\Big\}
\end{multline*}
is less than $\frac{1}{4\epsilon}|Q_0|$, where the constant $\epsilon$ is coming from the definition of the dyadic cubes and $\eta$.

Applying Calderon--Zygmund decomposition to the function $\chi_E$ on $Q_0$ at height $\frac{\epsilon}{2}$ we get pairwise disjoint cubes $Q^1_j\in \mathcal{D}(Q_0)$ such that
\begin{enumerate}
\item $\frac{\epsilon}{2}|Q^1_j|\leq |Q^1_j\cap E|\leq \frac{1}{2}|Q^1_j|$
\item $|E\setminus \cup_j Q^1_j|=0.$
\end{enumerate}
From (1), we have $Q^1_j\cap E^c\neq \emptyset$. Using the fact $|E|\leq \frac{\epsilon}{4}|Q_0|$, one can observe that
$\sum_j|Q^1_j|\leq \frac{1}{2}|Q_0|$. Therefore
$$\esssup_{(\xi,s)\in Q^1_j}|T(f\chi_{3Q_0\setminus 3Q^1_j}(\xi, s))|\leq C~ \Big(\frac{1}{|3Q_0|}\int_{3Q_0}|f|^2\Big)^\frac{1}{2}.$$
Also, using Lemma \ref{lem:pointwise} we can easily see that if $x\in Q_0\setminus \cup_j Q^1_j$, then
$$|T(f\chi_{3Q_0})(x)|\leq C~\Big(\frac{1}{|3Q_0|}\int_{3Q_0}|f|^2\Big)^\frac{1}{2}.$$

Using above result we get
\begin{align*}
|T(f\chi_{3Q_0})|\chi_{Q_0}&\leq |T(f\chi_{3Q_0})|\chi_{Q_0\setminus Q^1_j}+\sum_j |T(f\chi_{3Q_0})|\chi_{ Q^1_j}\\
&\leq |T(f\chi_{3Q_0})|\chi_{Q_0\setminus Q^1_j}+\sum_j |T(f\chi_{3Q_0\setminus 3Q^1_j})|\chi_{ Q^1_j}+\sum_j |T(f\chi_{ 3Q^1_j})|\chi_{ Q^1_j}\\
&\leq \Big(\frac{1}{|3Q_0|}\int_{3Q_0}|f|^2\Big)^\frac{1}{2}+\sum_j |T(f\chi_{ 3Q^1_j})|\chi_{ Q^1_j}.
\end{align*}
Hence, (\ref{estim:fs}) is proved.

Now, let us assume that $f$ is compactly supported. Let us consider any ball $B$ which contains the support of $f$. By \cite[Theorem 4.1]{H2}, there exists finite number of dyadic decompositions $\{S^l:l=1,2,\cdots, L\}$ and a cube $Q\in S^l$ for some $l=1,2,\cdots, L$ such that $B\subset Q$.
As $Q$ contains the support of $f$ from (\ref{estim:pointwise}) we have
\begin{multline*}
\|T^Nf(z,t)\|_{L^p(B, w)}\leq \|T^Nf(z,t)\|_{L^p(Q, w)}\\
\leq C~\|\sum_{Q\subset \mathcal{F}} \Big(\frac{1}{|3Q|}\int_{3Q}|f|^2\Big)^\frac{1}{2}\|_{L^p(Q, w)}\leq C~\|f\|_{L^p(w)}.
\end{multline*}
Since, the above is true for any cube $B$ which contains the support of $f$, we have
$$\|T^Nf(z,t)\|_{L^p(w)}\leq C~\|f\|_{L^p(w)}$$
for all $f\in C_c^\infty(H^n)$ and hence for all $f\in L^p(w)$

Of course, the constant appearing here is independent of $N$. Now, as $T^N$ converges to $T_M$ in $L^2$, there exists a subsequence which converges to $T_M$ almost everywhere. Thus, we can conclude that
$$\|T_Mf(z,t)\|_{L^p(w)}\leq C~\|f\|_{L^p(w)}$$
for $w\in A_{\frac{p}{2}}$, $2<p<\infty$.
 Hence Theorem \ref{th:Main1} is proved.
\end{proof}

\subsection*{Acknowledgements}
 This work is supported by Inspire fellowship from Department of Science and Technology. The author is extremely thankful to Dr. Rahul Garg, Dr. Luz Roncal, Prof. Sundaram Thangavelu and the referee for their careful reading of the manuscript, for giving him a lot of useful suggestions and pointing out typos. The author would also like to thank to Prof. The Anh Bui for pointing out an oversight.

\end{document}